\providecommand{\U}[1]{\protect\rule{.1in}{.1in}}
\newtheorem{theorem}{Theorem}
\theoremstyle{plain}
\newtheorem{acknowledgement}{Acknowledgement}
\newtheorem{claim}{Claim}
\newtheorem{corollary}{Corollary}
\newtheorem{definition}{Definition}
\newtheorem{example}{Example}
\newtheorem{lemma}{Lemma}
\newtheorem{proposition}{Proposition}
\newtheorem{remark}{Remark}
\numberwithin{equation}{section}
\begin{document}
\title[Dissipative systems on lattices]{The energy flow of dissipative
systems on infinite lattices}
\author{Sini\v{s}a Slijep\v{c}evi\'{c}}
\address{Department of Mathematics, Bijenicka 30, Zagreb, Croatia}
\email{slijepce@math.hr}
\urladdr{}
\date{11 January 2013}
\subjclass[2000]{ 37L60, 37K60, 34C26}
\keywords{Dissipative dynamics, Lattices, Frenkel-Kontorova model,
Asymptotics, Spin glasses, Coarsening, LaSalle principle, Gradient dynamics,
Thermodynamic limit.}

\begin{abstract}
We study the energy flow of dissipative dynamics on infinite lattices,
allowing the total energy to be infinite and considering formally gradient
dynamics. We show that in spatial dimensions 1,2, the flow is for almost all
times arbitrarily close to the set of equilibria, and in dimensions $\geq 3$%
, the size of the set with non-equilibrium dynamics for a positive density
of times is two dimensions less than the space dimension. The theory applies
to first and second order dynamics of elastic chains in a periodic or
polynomial potential, chains with interactions beyond the nearest neighbour,
deterministic dynamics of spin glasses, discrete complex Ginzburg-Landau
equation, and others. We in particular apply the theory to show existence of
coarsening dynamics for a class of generalized Frenkel-Kontorova models in
bistable potential.
\end{abstract}

\maketitle

\section{Introduction}

The state space of lattice dynamical systems we study consists of functions $%
u:\mathbb{Z}^{N}\rightarrow \mathbb{R}^{M}$. Here $N$ denotes the dimension
of a lattice, and each lattice point has $M$ degrees of freedom. We develop
here general results on asymptotic behavior of lattice systems which can be
understood as a thermodynamic limit of a family of finite-dimensional,
gradient dynamical systems. The motivating example is the Frenkel-Kontorova
(FK)\ model, with the total energy given formally by%
\begin{equation*}
E(u)=\sum_{i=-\infty }^{\infty }(u_{i}-u_{i-1})^{2}+V(u_{i})\text{,}
\end{equation*}%
where $V$ is a smooth periodic potential and $u=(u_{i})_{i\in \mathbb{Z}}\in 
\mathbb{R}^{\mathbb{Z}}$.\ Its gradient dynamics is given by $%
du_{i}/dt=\partial /\partial u_{i}E(u)$ (see Section 3 for details). The FK\
model has been studied as a reasonable physical model (e.g. for Josephson
junction arrays) with rich features present in more dimensional and\ more
complex systems (see \cite{Baesens98}, \cite{Baesens04}, \cite{Floria96}, 
\cite{Floria05}, \cite{Hu:05}, \cite{Qin:10}, \cite{Slijepcevic99} and
references therein). The FK\ model is an example of three paradigms studied
in this paper: this is an example of an extended, a dissipative, and a
system on a lattice.

The extended systems property in the setting of partial differential
equations means that a particular state of the system is defined on a
unbounded set (typically the entire $\mathbb{R}^{N}$), with no required
convergence to $0$ at infinity. Their physical importance is that such a
large state space is required to contain solutions such as travelling waves,
kinks and multikinks etc. In the setting of lattice systems, we consider
systems of infinitely many ordinary differential equations, and there is no
straightforward reduction to a finite dimensional system (e.g. because of
periodicity of solutions).

We focus here on the dissipative structure of extended systems. For
(unforced)\ systems for which the total energy is finite, the dissipative
structure typically implies existence of a Lyapunov function - typically the
energy functional itself - which is strictly decreasing along any
non-stationary solution, and bounded from below. By the LaSalle principle,
this implies a complete asymptotic description of the dynamics:\ the
dynamics asymptotically converges to the set of equilibria. Also strict
local minima of the Lyapunov function are Lyapunov stable. For extended
dissipative systems, the techniques do not apply as the energy functional is
divergent. As we will see, the dynamics resembles gradient systems, but can
and typically is topologically different.

Th. Gallay and the author have explored the extended dissipative structure
for partial differential equations. In \cite{Gallay:01}, \cite{Gallay:12},
we have shown that a large class of extended dissipative PDEs, including
examples such as the reaction-diffusion equation and the Navier-Stokes
equation, have certain universal features. In short, for spatial dimensions
1,2, the dynamics is for almost all times arbitrarily close to the set of
equilibria. In dimensions $N$ $\geq 3$, the persistent non-equilibrium
dynamics can be at most of the dimension $\mathbb{R}^{N-2}$. The dynamics,
however, topologically typically differs from gradient systems, as in
dimensions $N\geq 1$ $\omega $-limit sets can contain non-equilibrium
points, and in dimensions $N\geq 3$ periodic orbits and more complex
recurrent behavior can exist.

In this paper we explore this in the context of lattice systems. We show
that the same essential conclusions hold, but with modified proofs and the
key energy flux and dissipation estimates. We also show that the conclusions
apply to several examples which are difficult to model as PDEs, such as
lattices with interactions beyond the nearest neighbour, and lattices with
randomly chosen interactions for each pair of lattice points, such as spin
glasses. In \cite{Slijepcevic00}, we already proved some of the qualitative
features for lattices in dimension $N=1$. We here also strengthen the
results from \cite{Slijepcevic00} even in dimension $N=1$, as we provide
quantitative bounds related to all claims, whereas the results in \cite%
{Slijepcevic00} are merely of qualitative nature.

Our approach is axiomatic:\ we specify universal features of the energy flow
of dissipative lattice dynamics in Section 3, and then deduce general
theorems. As in \cite{Gallay:12}, we assume a solution of a system of
equations is a semiflow $\varphi $ on a state space $X$, and then assign to
each $u\in X\,$its energy, dissipation and flux functions $e,d,f$. Whereas
the Lyapunov function associates to each $u\in X$ a single number, in our
approach we associate to each $u\in X$ its energy, flux and dissipation at
each point $\alpha \in \mathbb{Z}^{N}$ of the lattice. We call such systems
the Lattice Extended Dissipative Systems, or shortly Lattice EDS. The key
features are the energy balance inequality, and the property that flux
generates dissipation. We express it by using the notation analogous to
calculus on $\mathbb{R}^{N}$, adapted to the discrete space $\mathbb{Z}^{N}$%
. In Section 4 we then show that many examples satisfy our assumptions,
including elastic chains in any dimension and with various potentials, but
also systems already mentioned, and semi-discretizations of PDEs such as the
discrete complex Ginzburg-Landau equation.

In Section 6 we show that for dimensions $N=1,2$, we can find $R$ large
enough (as a function of $T$), so that the total energy is not increasing on
the cube of radius $R$ (this is not necessarily true if $N\geq 3$). The
proof of this encodes the key ideas and reveals key constraints to the
energy flux for Lattice EDS. In Sections 7 and 8 we develop universal bounds
on the energy flux and dissipation, which we then use in Section 9 to
describe the asymptotics of Lattice EDS. We also use the flux and
dissipation bounds to develop general bounds to relaxation times.

In Section 11 we consider an application of the results to coarsening in a
bistable potential. This has been studied for continuos-space systems by
Eckmann and Rougemont in dimensions $N=1$ and $N=2$ (\cite{Eckmann98}, \cite%
{Rougemont00}). In particular, they have shown that for the real
Ginzburg-Landau equation, topologically a continuous analogue to gradient
FK\ model dynamics without forcing, one can construct orbits such that the
solution $u(x)$ at each point $x$ of the physical space "jumps" infinitely
many times between two stable equilibria. We call this phenomenon \textit{%
coarsening dynamics}. Here we study a class of FK models in dimensions $%
N=1,2 $. We give sufficient conditions on a probability measure $\mu $ on
the state space, so that $\mu $-a.e. initial condition $u:\mathbb{Z}%
^{N}\rightarrow \mathbb{R}$ has coarsening dynamics.\ This means that for
each lattice point $\alpha \in \mathbb{Z}^{N}$, $u(\alpha )$ jumps
infinitely many times between two stable equilibria of the potential as $%
t\rightarrow \infty $. We show this for both the first degree, gradient
dynamics without forcing (analogously to Eckmann and Rougemont), but also
for the second degree dynamics with sufficiently strong damping.

\section{The calculus on lattices}

In this section we recall the notation and elementary results of calculus on
lattices. We use the analogy between the standard results such as partial
integration and the Stokes theorem, with minimal technical difficulties due
to discreteness of the space. We denote by $u,v,w$ the functions $\mathbb{Z}%
^{N}\rightarrow \mathbb{R}$ or $\mathbb{Z}^{N}\rightarrow \mathbb{R}^{M}$.
The letter $N$ will always denote the dimension of the lattice ($d$ being
reserved for dissipation). Greek letters $\alpha ,\beta \,$will denote the
elements of $\mathbb{Z}^{N}$. Let $\varepsilon _{j}\in \mathbb{Z}^{N}$ be
the "basis" for $j=1,...,N$: $(\varepsilon _{j})_{i}=0$ for $i\not=j$, $1$
for $i=j$. The translation $T_{j}$ is defined naturally for $u:\mathbb{Z}%
^{N}\rightarrow \mathbb{R}$ or $u:\mathbb{Z}^{N}\rightarrow \mathbb{R}^{M}$
with $T_{j}u(\alpha )=u(\alpha +\varepsilon _{j})$.

The partial derivatives and differentials for $u:\mathbb{Z}^{N}\rightarrow 
\mathbb{R}$ are now:%
\begin{eqnarray*}
\partial _{j}u(\alpha ) &=&u(\alpha )-u(\alpha -\varepsilon _{j}), \\
\partial _{j}^{\ast }u(\alpha ) &=&u(\alpha +\varepsilon _{j})-u(\alpha
)=T_{j}\partial _{j}u(\alpha ), \\
\nabla u(\alpha )_{j} &=&\partial _{j}u(\alpha )\text{,} \\
\nabla ^{\ast }u(\alpha )_{j} &=&\partial _{j}^{\ast }u(\alpha )\text{,}
\end{eqnarray*}%
where $\nabla u(\alpha ),\nabla ^{\ast }u(\alpha ):\mathbb{Z}^{N}\rightarrow 
\mathbb{R}^{N}$. For $N=1$, we will also use the notation $\delta _{\alpha
},\delta _{\alpha }^{\ast }$. Given $v:\mathbb{Z}^{N}\rightarrow \mathbb{R}%
^{N}$, the divergence is%
\begin{eqnarray*}
\func{div}v(\alpha ) &=&\sum_{j=1}^{N}\partial _{j}v(\alpha )_{j}, \\
\func{div}^{\ast }(\alpha ) &=&\sum_{j=1}^{N}\partial _{j}^{\ast }v(\alpha
)_{j}.
\end{eqnarray*}%
We can also naturally define the Laplacian $\triangle u:\mathbb{Z}%
^{N}\rightarrow \mathbb{R}$ for $u:\mathbb{Z}^{N}\rightarrow \mathbb{R}$ by
any of the following equivalent relations:%
\begin{eqnarray}
\triangle u &=&\func{div}\nabla ^{\ast }u=\func{div}^{\ast }\nabla u
\label{r:cal1} \\
&=&\sum_{j=1}^{N}\partial _{j}\partial _{j}^{\ast }u=\sum_{j=1}^{N}\partial
_{j}^{\ast }\partial _{j}u,  \label{r:cal2}
\end{eqnarray}%
or equivalently $\triangle u(\alpha )=\sum_{j=1}^{N}\left[ u(\alpha
+\varepsilon _{j})+u(\alpha -\varepsilon _{j})-2u(\alpha )\right] $. The
proof of the following partial integration relations is straightforward and
left to the reader.

\begin{lemma}
Let $u,w:\mathbb{Z}^{N}\rightarrow \mathbb{R}$. Then%
\begin{eqnarray}
\partial _{j}^{\ast }(uw) &=&\partial _{j}^{\ast }u\cdot T_{j}v+u\cdot
\partial _{j}^{\ast }v  \label{r:cal3} \\
&=&\partial _{j}^{\ast }u\cdot v+T_{j}u\cdot \partial _{j}^{\ast }v,
\label{r:cal4} \\
\func{div}^{\ast }(u\cdot \nabla w) &=&u\triangle w+\nabla ^{\ast }u\cdot
\nabla ^{\ast }v,  \label{r:cal5} \\
\func{div}(u\cdot \nabla ^{\ast }w) &=&u\triangle w+\nabla u\cdot \nabla v.
\label{r:cal6}
\end{eqnarray}
\end{lemma}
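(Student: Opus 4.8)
The plan is to verify each identity by expanding the definitions of $\partial _{j}$ and $\partial _{j}^{\ast }$ pointwise and reading off the elementary cancellation that produces the discrete Leibniz rule. Treating \eqref{r:cal3}--\eqref{r:cal4} as the product rule for two functions $u,v:\mathbb{Z}^{N}\rightarrow \mathbb{R}$, I note that at a point $\alpha $ we have $\partial _{j}^{\ast }(uv)(\alpha )=u(\alpha +\varepsilon _{j})v(\alpha +\varepsilon _{j})-u(\alpha )v(\alpha )$; adding and subtracting the mixed term $u(\alpha )v(\alpha +\varepsilon _{j})$ gives \eqref{r:cal3}, while adding and subtracting $u(\alpha +\varepsilon _{j})v(\alpha )$ gives \eqref{r:cal4}. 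The only point to watch is that the factor surviving on the first summand is the translated value of one of the functions, which is exactly what generates the operators $T_{j}$ on the right-hand sides.

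For \eqref{r:cal5} I would apply \eqref{r:cal3} with $v$ replaced by $\partial _{j}w$, obtaining $\partial _{j}^{\ast }(u\,\partial _{j}w)=\partial _{j}^{\ast }u\cdot T_{j}\partial _{j}w+u\cdot \partial _{j}^{\ast }\partial _{j}w$. The key simplification is the identity $T_{j}\partial _{j}w=\partial _{j}^{\ast }w$ already recorded among the definitions, which turns the first summand into $\partial _{j}^{\ast }u\cdot \partial _{j}^{\ast }w$. Summing over $j=1,\dots ,N$, the first sum assembles into the inner product $\nabla ^{\ast }u\cdot \nabla ^{\ast }w$, while the second is $u\sum_{j}\partial _{j}^{\ast }\partial _{j}w=u\triangle w$ by \eqref{r:cal2}, which is \eqref{r:cal5}.

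For \eqref{r:cal6} the analogous ingredient is the backward Leibniz rule $\partial _{j}(uv)=u\,\partial _{j}v+\partial _{j}u\cdot T_{j}^{-1}v$, where $T_{j}^{-1}v(\alpha )=v(\alpha -\varepsilon _{j})$; this follows from the same cancellation as above, now expanding about $\alpha -\varepsilon _{j}$. Applying it with $v=\partial _{j}^{\ast }w$ and using the dual simplification $T_{j}^{-1}\partial _{j}^{\ast }w=\partial _{j}w$ (since $\partial _{j}^{\ast }w(\alpha -\varepsilon _{j})=w(\alpha )-w(\alpha -\varepsilon _{j})=\partial _{j}w(\alpha )$) yields $\partial _{j}(u\,\partial _{j}^{\ast }w)=u\,\partial _{j}\partial _{j}^{\ast }w+\partial _{j}u\cdot \partial _{j}w$; summing over $j$ and again invoking \eqref{r:cal2} gives $u\triangle w+\nabla u\cdot \nabla w$.

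I do not expect any genuine obstacle: the entire content is bookkeeping of the shift operators $T_{j}$ and $T_{j}^{-1}$. The one place where an index or sign slip would be easy is in matching $T_{j}\partial _{j}=\partial _{j}^{\ast }$ in the step for \eqref{r:cal5} and its backward counterpart $T_{j}^{-1}\partial _{j}^{\ast }=\partial _{j}$ in the step for \eqref{r:cal6}; once these two auxiliary identities are in hand, the summation over $j$ together with the definition \eqref{r:cal2} of $\triangle $ closes both divergence formulas immediately.
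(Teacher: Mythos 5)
Your proof is correct and complete; the paper itself leaves this lemma to the reader as a straightforward verification, and your pointwise expansion with the add-and-subtract trick, together with the auxiliary identities $T_{j}\partial _{j}=\partial _{j}^{\ast }$ and $T_{j}^{-1}\partial _{j}^{\ast }=\partial _{j}$, is exactly the intended argument. (The occurrences of $v$ in the statement are typos for $w$, as you implicitly and correctly assumed.)
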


Our key tool will be the analogue of the Stokes theorem. For square
lattices, the natural setting is to consider the cubes $C(r),C^{\ast }(r)$
of the radius $r\in \mathbb{N}$, and their boundaries $\partial
C(r),\partial C^{\ast }(r)$, defined as follows:%
\begin{eqnarray*}
C(r) &=&\{\alpha \in \mathbb{Z}^{N}\text{, }-r\leq \alpha _{j}\leq r-1\text{%
, }j=1,...,N\}\text{,} \\
C^{\ast }(r) &=&\{\alpha \in \mathbb{Z}^{N}\text{, }-r+1\leq \alpha _{j}\leq
r\text{, }j=1,...,N\}\text{,} \\
\partial C(r) &=&\partial C^{\ast }(r)=\{\alpha \in \mathbb{Z}^{N},\text{ }%
|\alpha _{j}|\leq r\text{ for }j=1,...,N\text{, but for some }k\text{, }%
|\alpha _{k}|=r\}\text{.}
\end{eqnarray*}

The normal vectors $n:\partial C(r)\rightarrow \mathbb{R}^{N}$, $n^{\ast
}:\partial C^{\ast }(r)\rightarrow \mathbb{R}^{N}$ are now%
\begin{eqnarray*}
n^{\ast }(\alpha ) &=&\left\{ 
\begin{array}{ll}
\sum_{i=1}^{k}\varepsilon _{j_{i}}, & \alpha _{j_{1}},...,\alpha _{j_{k}}=r,%
\text{ all other }-r+1\leq \alpha _{j}\leq r-1 \\ 
-\varepsilon _{k} & \alpha _{k}=-r\text{, all other }-r+1\leq \alpha
_{j}\leq r \\ 
0 & \text{otherwise,}%
\end{array}%
\right. \\
n(\alpha ) &=&\left\{ 
\begin{array}{ll}
\varepsilon _{k} & \alpha _{k}=r\text{, all other }-r\leq \alpha _{j}\leq r-1
\\ 
-\sum_{i=1}^{k}\varepsilon _{j_{i}}, & \alpha _{j_{1}},...,\alpha
_{j_{k}}=-r,\text{ all other }-r+1\leq \alpha _{j}\leq r-1 \\ 
0 & \text{otherwise.}%
\end{array}%
\right.
\end{eqnarray*}

Note that $|n^{\ast }(\alpha )|,|n(\alpha )|\leq \sqrt{N}$, with the values
higher than $1$ occurring at "corners" (at which also $n^{\ast }(\alpha
),n(\alpha )$ differ).

We push further the analogy with the continuous case, hopefully to ease
reading of the arguments to follow, by often using the integral symbol
instead of the finite sum over a bounded set $A\subset \mathbb{Z}^{N}$:%
\begin{equation*}
\int_{A}u(\alpha )d\alpha :=\sum_{\alpha \in A}u(\alpha ).
\end{equation*}

The discrete space analogue of the Stokes theorem now reads:

\begin{lemma}
\label{l:stokes}Let $v:\mathbb{Z}^{N}\rightarrow \mathbb{R}^{N}$ and $r\geq
1 $. Then%
\begin{eqnarray}
\int_{C^{\ast }(r)}\func{div}v(\alpha )d\alpha &=&\int_{\partial C^{\ast
}(r)}v(\alpha )\cdot n^{\ast }(\alpha )d\alpha ,  \label{r:one} \\
\int_{C(r)}\func{div}^{\ast }v(\alpha )d\alpha &=&\int_{\partial
C(r)}v(\alpha )\cdot n(\alpha )d\alpha .  \label{r:two}
\end{eqnarray}
\end{lemma}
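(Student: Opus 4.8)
The plan is to prove each identity by reducing it to a one-dimensional telescoping sum performed one coordinate at a time, and then matching the collected boundary terms against the defining conventions for $n^{\ast }$ and $n$. I will carry out \eqref{r:one} in detail; \eqref{r:two} then follows by the mirror-image argument.

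First I would expand the integrand by the definition of the divergence,
\[
\int_{C^{\ast }(r)}\func{div}v(\alpha )\,d\alpha =\sum_{j=1}^{N}\sum_{\alpha \in C^{\ast }(r)}\bigl[v(\alpha )_{j}-v(\alpha -\varepsilon _{j})_{j}\bigr],
\]
and then, for each fixed $j$, sum over the coordinate $\alpha _{j}$ first (a reordering of the finite sum, legitimate since $C^{\ast }(r)$ is a product of intervals). As $\alpha _{j}$ runs over $-r+1\leq \alpha _{j}\leq r$, the summand telescopes and collapses to
\[
v(\alpha )_{j}\big|_{\alpha _{j}=r}-v(\alpha -\varepsilon _{j})_{j}\big|_{\alpha _{j}=-r+1},
\]
namely the value of $v_{j}$ on the face $\{\alpha _{j}=r\}$ minus its value on the face $\{\alpha _{j}=-r\}$, with the remaining coordinates still ranging over their $C^{\ast }$-intervals $[-r+1,r]$.

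Summing these surviving terms over the remaining coordinates and over $j$ yields a sum over points of $\partial C^{\ast }(r)$, and the heart of the proof is to verify that this equals $\int_{\partial C^{\ast }(r)}v(\alpha )\cdot n^{\ast }(\alpha )\,d\alpha $. I would do this by classifying each boundary point $\alpha $ according to which coordinates attain $\pm r$ and counting how many times it is produced by the telescoping. The main obstacle---and the only place discreteness intervenes---is the bookkeeping at the corners. A point with coordinates $\alpha _{j_{1}},\dots ,\alpha _{j_{k}}=r$ and all others in $[-r+1,r-1]$ is produced once by the top term of each of these directions, contributing exactly $\sum_{i}v(\alpha )_{j_{i}}=v(\alpha )\cdot n^{\ast }(\alpha )$, which matches the first clause of the definition of $n^{\ast }$. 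The delicate case is a \emph{mixed} corner, where some coordinate equals $-r$ while another equals $+r$: here one must note that the top term of the $+r$ direction requires the other coordinates---including the one equal to $-r$---to lie in $[-r+1,r]$, a condition $-r$ fails, so such a point is produced only by the single bottom term $-v(\alpha )_{k}$. This is precisely what the asymmetric ranges in the second clause of the definition of $n^{\ast }$ record, and it is also why a point with two or more coordinates equal to $-r$ is produced by no term at all, matching $n^{\ast }=0$.

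Finally, \eqref{r:two} is the mirror image: over $C(r)$ one uses $\func{div}^{\ast }$ and the reversed range $-r\leq \alpha _{j}\leq r-1$, so the telescoping now collapses to the face $\{\alpha _{j}=r\}$ (lying just outside $C(r)$) minus the face $\{\alpha _{j}=-r\}$, and the roles of $+r$ and $-r$ in the corner analysis are interchanged. This matches the definition of $n$, in which it is now the $-r$ corners that accumulate a vector sum while the $+r$ faces count only singly, and the same classification of boundary points---with signs and admissible ranges exchanged---finishes the argument.
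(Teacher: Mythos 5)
Your proof is correct and follows essentially the same route as the paper's: expand the divergence, telescope the sum $\sum_{\alpha\in C^{\ast}(r)}\partial_{j}v_{j}(\alpha)$ one coordinate direction at a time, and match the surviving face terms against the definitions of $n^{\ast}$ and $n$. The paper's own proof simply asserts that ``all the other terms cancel out'' and refers to the definition of $n^{\ast}$, whereas you additionally carry out the corner bookkeeping (including the mixed $\pm r$ corners), which is the only delicate point and is handled correctly.
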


\begin{proof}
First note that by definition 
\begin{equation*}
\sum_{\alpha \in C^{\ast }(r)}\partial _{j}v_{j}(\alpha )=\sum_{\alpha \in
C^{\ast }(r),\alpha _{j}=r}v_{j}(\alpha )-\sum_{\alpha \in C^{\ast
}(r),\alpha _{j}=-r+1}v_{j}(\alpha -\varepsilon _{j})
\end{equation*}%
(all the other terms cancel out). We now sum over $j=1,...,r$, recall the
definition of $n^{\ast }$ and get (\ref{r:one});\ (\ref{r:two}) is analogous.
\end{proof}

\section{The key properties}

In this section we specify key properties of the energy flow of dissipative
systems on lattices. To motivate the definitions to follow, we start with
perhaps the simplest example: the over-damped dynamics of the
one-dimensional FK model, i.e. a system of elastically connected balls in a
periodic potential. A state of the model is given by a function $u:\mathbb{%
Z\rightarrow R}$, and its total energy is a formal sum (typically divergent):%
\begin{equation}
E(u)=\sum_{\alpha \in \mathbb{Z}}\left\{ \frac{1}{2}(u(\alpha )-u(\alpha
-1)-b)^{2}+V(u(\alpha ))\right\} \text{,}  \label{r:lyapunov}
\end{equation}%
where $b\in \mathbb{R}$ is a constant (the characteristic length), and
\thinspace $V:\mathbb{R\rightarrow R}$ is a smooth periodic potential, $%
V(x+1)=V(x)$. The gradient dynamics is formally expressed as $\partial
_{t}u=-\nabla E(u)$, or more precisely with the equations%
\begin{eqnarray}
\partial _{t}u(\alpha ) &=&-2u(\alpha )+u(\alpha -1)+u(\alpha +1)-V^{\prime
}(u(\alpha ))  \label{r:FKone} \\
&=&\triangle u(\alpha )-V^{\prime }(u(\alpha )).  \notag
\end{eqnarray}

It is well known that the equation\ (\ref{r:FKone}) defines a continuos
semiflow $\varphi $ on the space $X_{K}=\{u:\mathbb{Z\rightarrow R}%
,|u(\alpha )-u(\alpha -1)|\leq K$ for all $\alpha \}$, where $K$ is any
positive integer. In particular, local existence and continuity follow from
the standard results on solutions of ordinary differential equations on
Banach spaces, and $X_{K}$ is invariant by the order-preserving property of (%
\ref{r:FKone}) and the existence of stationary solutions with the spacing $K$%
. We consider two topologies on $X_{K}$:\ the first one is induced by the $%
||.||_{\infty }$ norm, and the second one is the induced product topology
(as a subset of $\mathbb{R}^{Z}$). The semiflow $\varphi $ is continuous in
both topologies.

As was noted in the introduction, the dynamics (\ref{r:FKone}) is not
gradient in the strict sense (and the LaSalle principle does not hold), as
the "Lyapunov" function (\ref{r:lyapunov}) is typically not finite.

We can associate to each $u:\mathbb{Z\rightarrow R}$ its local energy $%
e_{u}(\alpha )$ as%
\begin{equation*}
e_{u}(\alpha )=\frac{1}{2}(u(\alpha )-u(\alpha -1))^{2}+V(u(\alpha )),
\end{equation*}%
and then the total energy in the interval $[-r+1,r]=C^{\ast }(r)$, denoted
by $E_{u}(r)$ is equal to (using also the notation from the previous section)%
\begin{equation*}
E_{u}(r)=\int_{C^{\ast }(r)}e_{u}(\alpha )d\alpha =\sum_{\alpha
=-r+1}^{r}\left\{ \frac{1}{2}(u(\alpha )-u(\alpha -1))^{2}+V(u(\alpha
))\right\} \text{.}
\end{equation*}%
We now consider the time evolution of the energy $E_{u}(r)$, and obtain the
energy balance equation:%
\begin{eqnarray*}
\partial _{t}E_{u(t)}(r) &=&\sum_{\alpha =-r+1}^{r}\left\{ -(2u(\alpha
)-u(\alpha -1)-u(\alpha +1))\partial _{t}u(\alpha )+V^{\prime }(u(\alpha
))\partial _{t}u(\alpha )\right\} \\
&&-(u(r+1)-u(r))\partial _{t}u(r)+(u(-r+1)-u(-r))\partial _{t}u(-r) \\
&=&-\sum_{\alpha =-r+1}^{r}(\partial _{t}u(\alpha ))^{2}-\partial _{\alpha
}^{\ast }u(r)\partial _{t}u(r)+\partial _{\alpha }^{\ast }u(-r)\partial
_{t}u(-r)\text{.}
\end{eqnarray*}%
We can now introduce the local energy dissipation $d_{u}$ and flux $f_{u}$
associated to $u:\mathbb{Z\rightarrow R}$ with%
\begin{eqnarray*}
d_{u}(\alpha ) &=&(\partial _{t}u(\alpha ))^{2}, \\
f_{u}(\alpha ) &=&-\partial _{t}u(\alpha )(u(\alpha +1)-u(\alpha ))=\partial
_{t}u(\alpha )\partial _{\alpha }^{\ast }u(\alpha )\text{.}
\end{eqnarray*}%
The energy balance equation can now be written in its integral and local
form, also related by the Stokes theorem (Lemma \ref{l:stokes})%
\begin{eqnarray}
\partial _{t}E_{u(t)}(r) &=&-\int_{C^{\ast }(r)}d_{u(t)}(\alpha )d\alpha
+\int_{\delta C^{\ast }(r)}f_{u(t)}(\alpha )\cdot n^{\ast }(\alpha )d\alpha ,
\label{r:global} \\
\partial _{t}e &=&-d+\func{div}f.  \label{r:local}
\end{eqnarray}

The energy balance equation (\ref{r:local}) is the first key component of
the theory to follow. The second one is the observation that \textit{there
is no energy transport (or energy flux)} \textit{without dissipation}. For
the equation (\ref{r:FKone}) we can without loss of generality assume $V\geq
0$ (otherwise we add a constant).\ With the choice of $e,d,f$ as above, we
have.%
\begin{equation*}
f^{2}\leq 2||e||_{\infty }d\text{.}
\end{equation*}

Motivated by this, we introduce an abstract definition of an extended
dissipative system on a lattice, associated to a continuous semiflow rather
than to a specific set of equations. If $\varphi \,$is a semiflow on a state
space $X$, we will associate to each $u\in X$ the functions describing
energy, dissipation and flux at each point of a lattice $\mathbb{Z}^{N}$.
Thus we have a family of mappings $e,d,f_{1},...,f_{N}:X\rightarrow
l_{\infty }(\mathbb{Z}^{N})$, where $f=(f_{1},...,f_{N})\,$is the flux
vector. Here $l_{\infty }(\mathbb{Z}^{N})$ is as usual the space of bounded
functions $u:$ $\mathbb{Z}^{N}\rightarrow \mathbb{R}$ with the sup-norm $%
||.||_{\infty }$. A point $u\in X$ is stationary for the semiflow $\varphi $%
, if for each $t\geq 0$, $\varphi (u,t)=u$.

We now list the required properties of the functions \thinspace $e,d,f$, as
we have seen satisfied for the equation (\ref{r:FKone}).

\begin{description}
\item[(A1)] $e,d\geq 0$;

\item[(A2)] For each $\alpha \in \mathbb{Z}^{N}$, the functions $X\mapsto
e(\alpha ),d(\alpha ),f_{1}(\alpha ),...,f_{N}(\alpha )$ are continuos as
functions $X\rightarrow \mathbb{R}$; also $t\rightarrow ||e_{u(t)}||_{\infty
},||d_{u(t)}||_{\infty },||f_{u(t)}||_{\infty }$ are continuous as functions 
$\mathbb{R}^{+}\rightarrow \mathbb{R}^{+}$;

\item[(A3)] If $d\equiv 0$ then $u\in X$ is stationary;

\item[(A4)] The energy balance equation holds ($\partial _{t}$ is derivative
in terms of distributions):%
\begin{equation}
\partial _{t}e=-d+\func{div}f;  \label{r:ebe}
\end{equation}

\item[(A5)] There is a non-decreasing function $b:\mathbb{R}^{+}\rightarrow 
\mathbb{R}^{+}$ such that%
\begin{equation}
f^{2}\leq b(||e||_{\infty })\cdot d\text{.}  \label{r:eba}
\end{equation}
\end{description}

\begin{definition}
Assume $X$ is a metrizable space and $\varphi $ a continuous semiflow on $X$%
. We say that $(X,\varphi )\,$\ is a N-dimensional extended dissipative
system on a lattice (\textbf{Lattice EDS}), if there exist functions $%
e,d,f_{1},...,f_{N}:X\rightarrow l_{\infty }(\mathbb{Z}^{N})$, $%
f=(f_{1},...,f_{N})$, satisfying (A1)-(A5).
\end{definition}

In the dimension $N=1$, we will sometimes write (\ref{r:ebe}) as%
\begin{equation*}
\partial _{t}e=-d+\partial _{\alpha }f\text{.}
\end{equation*}

Given a point $u\in X$, we will often denote its semiorbit by $u(t)$, and
the family of functions $e,d,f$ with $e(\alpha ,t),d(\alpha ,t),f(\alpha ,t)$
(evolution of the energy, flux and dissipation along a semiorbit). Note that
then (A2) and continuity of the semiflow imply that $e,d,f_{1},...,f_{N}:%
\mathbb{Z}^{d}\times \lbrack 0,\infty )\rightarrow \mathbb{R}$ are
continuous in the second variable. When not clear to which $u\in X$ we
associate $e,d,f$, we write it as $e_{u},d_{u},f_{u}$, otherwise the
subscript $u$ is omitted.

In the next section, we give a number of examples justifying the definition
of Lattice EDS.

We briefly comment perhaps the most technical assumption (A2). It may seem
that this can be replaced with a simpler requirement that $e,d,f$ are
continuous as functions $X\rightarrow l_{\infty }(\mathbb{Z}^{N})$. This
will however not typically be true in our setting, as we will typically
consider the product topology on $X$ (i.e. topology of pointwise convergence
at lattice points). Thus we require only that $e,d,f$ are continuous locally
(i.e. at one lattice point), and uniformly continuous along a semiorbit,
which will hold in our examples.

In most cases, we will also require that the energy $e$ is uniformly bounded
on $X$, so we introduce the following notion:

\begin{definition}
We say that a Lattice EDS\ is bounded, if $\sup_{u\in X}||e_{u}||_{\infty
}<+\infty $.
\end{definition}

For bounded Lattice EDS, we often denote by $\beta =\sup_{u\in
X}b(||e_{u}||_{\infty })$, and then (\ref{r:eba}) becomes%
\begin{equation}
f^{2}\leq \beta d\text{.}  \label{r:beta}
\end{equation}

This requirement means that the local energy remains uniformly bounded,
while the total energy may still be (and typically is) infinite. This is
typically physically a reasonable requirement for dissipative systems, and
in a number of cases can be demonstrated by either the ordering property of
solutions (i.e. a maximum principle), or by appropriate local energy
estimates. We will discuss it further related to particular examples.

\section{Examples of extended dissipative systems on lattices\label%
{s:examples}}

\subsection{The standard damped Frenkel Kontorova model.}

The set of states of a N-dimensional FK model is the set of functions $f:%
\mathbb{Z}^{N}\rightarrow \mathbb{R}^{N}$. The dynamics is governed by the
elastic force between the particles moving in a smooth periodic potential $V:%
\mathbb{R}^{N}\rightarrow \mathbb{R}$, $V(x+\alpha )=V(x)$ for all $\alpha
\in \mathbb{Z}^{N}$. The damped equations can be thus written (by again
relying on the notation from Section 2):%
\begin{equation}
\lambda \cdot \partial _{tt}u_{j}(\alpha )+\partial _{t}u_{j}(\alpha
)=\triangle u_{j}(\alpha )-\frac{\partial }{\partial x_{j}}V(u(\alpha )),
\label{r:fkgen}
\end{equation}%
where $u:\mathbb{Z}^{N}\rightarrow \mathbb{R}^{N}$, $u(\alpha
)=(u_{1}(\alpha ),u_{2}(\alpha ),...,u_{N}(\alpha ))$, and $\lambda \geq
0\,\ $is a constant. Again without loss of generality we assume $V\geq 0$.
We define the set of functions $f:\mathbb{Z}^{N}\rightarrow \mathbb{R}^{N}$
of bounded width with%
\begin{equation}
Y=\{u:\mathbb{Z}^{N}\rightarrow \mathbb{R}^{N},\text{ }||\nabla u||_{\infty
}<\infty \}\text{,}  \label{r:bw}
\end{equation}%
where $\nabla u(\alpha )$ is a $N\times N$ matrix, $(\nabla u(\alpha
))_{i,j}=\partial _{i}u_{j}(\alpha )$, its norm $|\nabla u(\alpha
)|^{2}=\sum_{i,j=1}^{N}(\partial _{i}u_{j}(\alpha ))^{2}$ and $||\nabla
u||_{\infty }=\sup_{\alpha \in \mathbb{Z}^{N}}|\nabla u(\alpha )|$.

In the gradient case $\lambda =0$, the state space will be $X=Y$, and in
damped the case $\lambda >0$, it will be $X=Y\times l_{\infty }(\mathbb{Z}%
^{N})$, $(u,\partial _{t}u)\in X$. It is well known that the equation (\ref%
{r:fkgen}) generates a continuos semiflow on $X$, The considered topology is
the induced product topology on $Y$ (as a subset of $(\mathbb{R}^{N})^{%
\mathbb{Z}^{N}}$) and in the case $\lambda >0$ the product topology on $X$.
In particular, the global existence of solutions and the invariance of $X$
follows from the fact that $||\nabla u||_{\infty }$ can grow at most
exponentially, thus it can not diverge at some finite time.

We now show that this is a N-dimensional Lattice EDS. We define the energy,
dissipation and flux associated to $u\in X$ with%
\begin{eqnarray*}
e(\alpha ) &=&\frac{\lambda }{2}(\partial _{t}u(\alpha ))^{2}+\frac{1}{2}%
|\nabla u(\alpha )|^{2}+V(u(\alpha )), \\
d(\alpha ) &=&(\partial _{t}u(\alpha ))^{2}, \\
f(\alpha ) &=&\sum_{j=1}^{N}\partial _{t}u_{j}(\alpha )\nabla ^{\ast
}u_{j}(\alpha ),
\end{eqnarray*}%
where $(\partial _{t}u)^{2}=\sum_{j=1}^{N}(\partial _{t}u_{j})^{2}$. We
easily see that in both cases $\lambda =0$, $\lambda >0$ we have $e,d,f\in
l_{\infty }(\mathbb{Z}^{N})$, and that the axioms (A1),\ (A2), (A3)\ hold
trivially. We now have%
\begin{equation*}
\partial _{t}e(\alpha )=\sum_{j=1}^{N}\lambda \partial _{t}u_{j}\cdot
\partial _{tt}u_{j}+\sum_{j=1}^{N}\nabla u_{j}(\alpha )\cdot \partial
_{t}\nabla u_{j}(\alpha )+\sum_{j=1}^{N}\frac{\partial }{\partial x_{j}}%
V(u(\alpha ))\partial _{t}u_{j}(\alpha ).
\end{equation*}%
As by the partial integration (\ref{r:cal6}), the second sum on the
right-hand side is equal to $\sum_{j=1}^{N}\func{div}(\partial
_{t}u_{j}(\alpha )\cdot \nabla ^{\ast }u_{j}(\alpha ))-$ $%
\sum_{j=1}^{N}\partial _{t}u_{j}(\alpha )\triangle u_{j}(\alpha )$, we get
the energy balance equation%
\begin{eqnarray*}
\partial _{t}e(\alpha ) &=&-\sum_{j=1}^{N}(\partial _{t}u_{j}(\alpha
))^{2}+\sum_{j=1}^{N}\func{div}(\partial _{t}u_{j}(\alpha )\cdot \nabla
^{\ast }u_{j}(\alpha )) \\
&=&-d(\alpha )+\func{div}f(\alpha )\text{,}
\end{eqnarray*}%
thus (A4)\ holds (note that $e$ is differentiable for all $t>0$). We can
also easily check (A5):%
\begin{eqnarray*}
f^{2}(\alpha ) &=&\left( \sum_{j=1}^{N}\partial _{t}u_{j}(\alpha )\nabla
^{\ast }u_{j}(\alpha )\right) ^{2}=\sum_{i=1}^{N}\left(
\sum_{j=1}^{N}\partial _{t}u_{j}(\alpha )\partial _{i}^{\ast }u_{j}(\alpha
)\right) ^{2} \\
&\leq &\sum_{j=1}^{N}(\partial _{t}u_{j}(\alpha
))^{2}\sum_{j=1}^{N}(\partial _{i}^{\ast }u_{j}(\alpha ))^{2}\leq d(\alpha
)|\nabla ^{\ast }u(\alpha )|^{2} \\
&\leq &d(\alpha )\sup_{\alpha \in \mathbb{Z}^{N}}|\nabla ^{\ast }u(\alpha
)|^{2}\leq d(\alpha )\frac{1}{N}\sup_{\alpha \in \mathbb{Z}^{N}}|\nabla
u(\alpha )|^{2}\leq \frac{2}{N}d(\alpha )||e||_{\infty }\text{.}
\end{eqnarray*}%
We conclude that (\ref{r:fkgen}) generates a Lattice EDS\ on $X$.

One can consider more specific state spaces. Let us first discuss the case $%
\lambda =0$. For a given constant $K>0$, let $Y_{K}$ be the set of all
states whose semiorbits have uniformly bounded width%
\begin{equation*}
Y_{K}=\{u:\mathbb{Z}^{N}\rightarrow \mathbb{R}^{N},\text{ }||\nabla
u(t)||_{\infty }\leq K\text{ for all }t\geq 0\}\text{.}
\end{equation*}%
Then by definition, $Y_{K}$ is invariant, and (\ref{r:fkgen}) generates a
Lattice EDS on $Y_{K}$ too. We already noted that for $N=1$, all $u\in $ $Y$
are in $Y_{K}$ for a sufficiently large integer $K>0$. Also in any
dimension, all periodic orbits of (\ref{r:fkgen}) are in $Y_{K}$ for $K$
large enough. Also $e$ on $Y_{K}$ is bounded, thus the Lattice EDS is
bounded. If we introduce the equivalence relation $u\sim v$ if $u-v=\alpha
\in \mathbb{Z}^{N}$, then one can show that the dynamics (\ref{r:fkgen}) is
well defined on the quotient space $Y_{K}/\sim $, and induces a Lattice EDS
on $Y_{K}/\sim $ (the definitions of $e,d,f$ are invariant with respect to $%
\sim $). By Tychonoff theorem, $Y_{K}/\sim $ is compact in the induced
product topology. Thus the study of (\ref{r:fkgen}) reduces to a study of a
semiflow on a compact space with a Lattice EDS\ structure.

An analogous construction is possible also in the case $\lambda >0$, by also
uniformly bounding the first derivatives $\partial _{t}u$.

\subsection{Generalized Frenkel-Kontorova model}

We now discuss chains with more general nearest-neighbour interactions. We
consider one-dimensional chains in a larger-dimensional space, i.e. the set
of states comprises of functions $u:\mathbb{Z}\rightarrow \mathbb{R}^{M}$.
We assume that the nearest-neighbour interaction is defined by a smooth
potential $L:\mathbb{R}^{M}\times \mathbb{R}^{M}\rightarrow \mathbb{R}$
satisfying

\begin{description}
\item[(L1)] $L(x,y)=L(x+\alpha ,y+\alpha )$ for all $\alpha \in \mathbb{Z}%
^{M},$

\item[(L2)] $\lim_{|x-y|\rightarrow \infty }L(x,y)=+\infty $.
\end{description}

Clearly the standard 1D Frenkel-Kontorova model is a special case of this.
The gradient equations of motion are%
\begin{equation}
\partial _{t}u(\alpha )=-L_{2}(u(\alpha -1),u(\alpha ))-L_{1}(u(\alpha
),u(\alpha +1)),\,  \label{r:v3}
\end{equation}%
where $L_{1},L_{2}$ denote partial derivatives with respect to the first and
second coordinate. We consider the gradient dynamics only, though the damped
one is\ also an example of a Lattice EDS\ (the choices of $e,d,f$ and
calculation are analogous to the standard case).

We note that this example includes a particularly important family of
potentials $L$: generating functions of $M$-dimensional Tonelli Lagrangian
maps. These are discrete analogues of Tonelli Lagrangian flows on a torus
(see e.g. \cite{Fathi:12}), and can also be understood as time-one maps of
Lagrangian flows. In addition to (L1), (L2), Tonelli Lagrangians satisfy the
following: $L_{12}$ is positive definite, and (L2) is strengthened to be $%
\lim_{|x-y|\rightarrow \infty }L(x,y)/|x-y|=+\infty $. Then the stationary
points of (\ref{r:v3}) correspond by variational principle to orbits of the
associated Lagrangian map, and the study of (\ref{r:v3}) is the study of
formally gradient dynamics of the action functional.

Without loss of generality $L\geq 0$ (by (L1), (L2), $L$ is bounded from
below;\ so if not we add a constant). We define $e,d,f$ associated to a
state $u:\mathbb{Z}\rightarrow \mathbb{R}^{M}$ with%
\begin{eqnarray*}
e(\alpha ) &=&L(u(\alpha -1),u(\alpha )) \\
d(\alpha ) &=&(\partial _{t}u(\alpha ))^{2}, \\
f(\alpha ) &=&-L_{1}(u(\alpha ),u(\alpha +1))\partial _{t}u(\alpha )\text{.}
\end{eqnarray*}

We first check (A4):%
\begin{eqnarray*}
\partial _{t}e(\alpha ) &=&L_{1}(u(\alpha -1),u(\alpha ))\partial
_{t}u(\alpha -1)+L_{2}(u(\alpha -1),u(\alpha ))\partial _{t}u(\alpha ) \\
&=&-(\partial _{t}u(\alpha ))^{2}+L_{1}(u(\alpha -1),u(\alpha ))\partial
_{t}u(\alpha -1)-L_{1}(u(\alpha ),u(\alpha +1))\partial _{t}u(\alpha ) \\
&=&-d(\alpha )+\partial _{\alpha }f(\alpha ).
\end{eqnarray*}

The property (A5) will follow directly from this:

\begin{lemma}
\label{l:defb}Assume $L:\mathbb{R}^{M}\times \mathbb{R}^{M}\rightarrow 
\mathbb{R}$ is continuously differentiable and satisfies (L1), (L2). Then
there exists a non-decreasing function $b:\mathbb{R}^{+}\rightarrow \mathbb{R%
}^{+}$ such that for all $x,y\in \mathbb{R}^{M}$,%
\begin{equation*}
L_{1}(x,y)^{2}\leq b(|L(x,y)|).
\end{equation*}
\end{lemma}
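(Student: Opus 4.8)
The plan is to define $b$ directly as the relevant supremum and then to prove that supremum finite by exploiting the diagonal periodicity (L1) together with the coercivity (L2), which jointly confine the sublevel sets of $L$ to a compact set modulo a lattice action. Concretely, I would set
\[
b(s)=\sup\bigl(\{0\}\cup\{L_{1}(x,y)^{2}:x,y\in\mathbb{R}^{M},\ |L(x,y)|\leq s\}\bigr).
\]
This is non-decreasing in $s$ by construction and takes values in $\mathbb{R}^{+}\cup\{+\infty\}$, and once finiteness is established the asserted inequality is immediate: for arbitrary $x,y$, put $s=|L(x,y)|$; then $(x,y)$ lies in the set over which the supremum defining $b(s)$ is taken, so $L_{1}(x,y)^{2}\leq b(|L(x,y)|)$.

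The substance is therefore the finiteness of $b(s)$ for each fixed $s\geq 0$. First I would differentiate the identity in (L1) with respect to the first argument, obtaining $L_{1}(x,y)=L_{1}(x+\alpha,y+\alpha)$ for all $\alpha\in\mathbb{Z}^{M}$; hence both $L$ and $L_{1}$ are invariant under the diagonal translation action of $\mathbb{Z}^{M}$. Next, (L2) furnishes, for the given $s$, a radius $D(s)$ such that $|x-y|>D(s)$ forces $L(x,y)>s$; contrapositively, $|L(x,y)|\leq s$ implies $L(x,y)\leq s$ and hence $|x-y|\leq D(s)$.

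I would then use the periodicity to push any admissible pair into a fundamental domain. Given $(x,y)$ with $|L(x,y)|\leq s$, choose $\alpha\in\mathbb{Z}^{M}$ with $x':=x-\alpha\in[0,1)^{M}$, and set $y':=y-\alpha$; by the $\mathbb{Z}^{M}$-invariance noted above, $L(x',y')=L(x,y)$ and $L_{1}(x',y')=L_{1}(x,y)$, while $|x'-y'|=|x-y|\leq D(s)$ and therefore $|y'|\leq\sqrt{M}+D(s)$. Thus $(x',y')$ lies in the compact set $K_{s}:=[0,1]^{M}\times\{w\in\mathbb{R}^{M}:|w|\leq\sqrt{M}+D(s)\}$, which is independent of the chosen pair. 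Since $L$ is $C^{1}$, the map $L_{1}$ is continuous and so attains a finite maximum $C(s)$ of $|L_{1}|$ on $K_{s}$; consequently $L_{1}(x,y)^{2}=L_{1}(x',y')^{2}\leq C(s)^{2}$, which yields $b(s)\leq C(s)^{2}<\infty$.

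The single point deserving care — the \emph{main obstacle}, though it is a mild one — is ensuring that the confinement is genuinely uniform: that (L2) bounds $|x-y|$ over the \emph{entire} sublevel set rather than merely along rays, and that the lattice reduction traps $x'$ (via the fundamental domain) and $y'$ (via the bound on $|x'-y'|$) \emph{simultaneously} in one fixed compact set. Once both coordinates are so trapped, continuity of $L_{1}$ closes the argument, and no hypothesis beyond (L1)–(L2) and the $C^{1}$ assumption is needed.
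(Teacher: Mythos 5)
Your proof is correct and takes essentially the same approach as the paper: both arguments use (L2) to confine $|x-y|$ on the sublevel set $\{|L|\leq s\}$ and (L1) to reduce to a compact fundamental domain on which continuity of $L_{1}$ gives the bound. The only cosmetic difference is that the paper packages this as a composition of two monotone suprema, $L_{1}(x,y)^{2}\leq b_{1}(|x-y|)$ and $|x-y|\leq b_{2}(|L(x,y)|)$, whereas you take a single direct supremum over the sublevel set.
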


\begin{proof}
We define the following functions $b_{1},b_{2}:\mathbb{R}^{+}\rightarrow 
\mathbb{R}^{+}$ as%
\begin{eqnarray*}
b_{1}(x) &=&\sup_{|b-a|\leq x}L_{1}(a,b)^{2}, \\
b_{2}(y) &=&\sup_{|V(a,b)|\leq y}|b-a|.
\end{eqnarray*}

We see that $b_{1}$ is $<\infty $ by periodicity (L1): the domain of the
supremum can be reduced to a compact set. Also $b_{2}(y)<\infty $ because of
(L2). By definition $b_{1},b_{2}$ are non-decreasing and $L_{1}(a,b)^{2}\leq
b_{1}(|b-a|)$, $|b-a|\leq b_{2}(|V(a,b)|)$. We set $b=b_{2}\circ b_{1}$, and
finally note that the composition of non-decreasing functions is
non-decreasing.
\end{proof}

Again let $X$ be the set of states $u:\mathbb{Z}\rightarrow \mathbb{R}^{M}$
of bounded width as in (\ref{r:bw}), with the induced product topology. By
combining all of the above, we see that (\ref{r:v3}) induces a continuos
semiflow on $X$ which is a Lattice EDS.

\subsection{Lattices with interactions beyond the nearest-neighbour\label%
{s:nearest}}

For simplicity, we discuss one-dimensional lattices with the set of states $%
u:\mathbb{Z}\rightarrow \mathbb{R}$ in the gradient case, though higher
dimensions and the damped case can be analysed analogously as above. Assume $%
C\subset \mathbb{Z}$ is a finite set of lattice distances for which there is
a non-zero interaction potential $L^{\gamma }:\mathbb{R\times R\rightarrow R}
$, $\gamma \in C$, satisfying (L1), (L2) and without loss of generality $%
L^{\gamma }\geq 0$. For simplicity assume $0\not\in C$ (if there is a
periodic site potential, we can add it to any $L^{\gamma }$). The equations
of motion are now%
\begin{equation}
\partial _{t}u(\alpha )=-\sum_{\gamma \in C}(L_{2}^{\gamma }(u(\alpha
-\gamma ),u(\alpha ))+L_{1}^{\gamma }(u(\alpha ),u(\alpha +\gamma ))\text{.}
\label{r:notnearest}
\end{equation}

Let $X$ be again the set of all $u:\mathbb{Z}\rightarrow \mathbb{R}$ of
bounded width, and let $e,d,f$ be%
\begin{eqnarray*}
e(\alpha ) &=&\sum_{\gamma \in C}L^{\gamma }(u(\alpha -\gamma ),u(\alpha )),
\\
d(\alpha ) &=&(\partial _{t}(\alpha ))^{2}, \\
f(\alpha ) &=&-\sum_{\gamma \in C}L_{1}^{\gamma }(u(\alpha ),u(\alpha
+\gamma ))\partial _{t}u(\alpha )\text{.}
\end{eqnarray*}

The proofs of (A1), (A2), (A3)\ are straightforward, and (A4)\ is analogous
to previous examples.

Assume $b^{\gamma }$, $\gamma \in C$ are functions constructed in Lemma \ref%
{l:defb}, associated to $L^{\gamma }$. Then%
\begin{eqnarray*}
f^{2}(\alpha ) &=&(\partial _{t}u(\alpha ))^{2}\left( \sum_{\gamma \in
C}L_{1}^{\gamma }(u(\alpha ),u(\alpha +\gamma ))\right) ^{2} \\
&\leq &|C|(\partial _{t}u(\alpha ))^{2}\sum_{\gamma \in C}L_{1}^{\gamma
}(u(\alpha ),u(\alpha +\gamma ))^{2} \\
&\leq &|C|(\partial _{t}u(\alpha ))^{2}\sum_{\gamma \in C}b^{\gamma
}(||e||_{\infty })\text{.}
\end{eqnarray*}

We see that (A5)\ holds with $b=|C|\sum_{\gamma \in C}b^{\gamma }$. Finally
we note that if for all $\gamma $, $L^{\gamma }$ satisfies the twist
condition $L_{12}^{\gamma }\geq 0$, then the equation (\ref{r:notnearest})
preserves ordering. We can then show that $X_{K}=\{||\partial _{\alpha
}u||\leq K\}$ is invariant for all integer $K$, and (\ref{r:notnearest})
generates a bounded Lattice EDF\ on $X_{K}$.

\subsection{Spin glasses and other random interaction models}

We can also define a lattice in which the interactions are chosen from a
finite family of interactions satisfying (L1), (L2), randomly for each pair
of neighbouring lattice points. We give an example of damped dissipative
dynamics of a system which is closely related to the Edward-Anderson
spin-glass model (\cite{Fisher93}), though numerous other variations are
possible. Assume the site potential is given by%
\begin{equation*}
V(x)=\frac{1}{4}u^{4}-\frac{1}{2}u^{2}+\frac{1}{4}\text{.}
\end{equation*}%
It is bistable, with minima at $x=\pm 1$. Choose parameters $\mu ,\lambda >0$%
. We define two possible nearest-neighbour interactions%
\begin{eqnarray*}
L^{+}(x,y) &=&\frac{\mu }{2}(x-y)^{2}, \\
L^{-}(x,y) &=&\frac{\mu }{2}(x-y-2)^{2}\text{.}
\end{eqnarray*}%
As in spin-glasses models, the interactions either favour the lattice points
being in the same, or the opposite site potential minima. Assume the set of
lattice states is the set of functions $u:\mathbb{Z}^{2}\rightarrow \mathbb{R%
}$. Let $S:\mathbb{Z}^{2}\times \{-\varepsilon _{1},\varepsilon
_{1},-\varepsilon _{2},\varepsilon _{2}\}\rightarrow \{+,-\}$ be a random
function assigning to each neighbouring lattice pair a symmetric interaction
(i.e. satisfying $S(\alpha ,-\varepsilon _{j})=S(\alpha -\varepsilon
_{j},\varepsilon _{j})$). The damped lattice dynamics is then given by%
\begin{equation}
\lambda \cdot \partial _{tt}u(\alpha )+\partial _{t}u(\alpha
)=-\sum_{\varepsilon =\pm \varepsilon _{1},\pm \varepsilon
_{2}}L_{1}^{S(\alpha ,\varepsilon )}(u(\alpha ),u(\alpha +\varepsilon
))+u(\alpha )-u(\alpha )^{3}.  \label{r:spinglass}
\end{equation}

Then (\ref{r:spinglass}) defines a continuous semiflow on $X=\{u:\mathbb{Z}%
^{2}\rightarrow \mathbb{R}$, $||u||_{\infty }\leq 1\}$. Because of the
ordering property of (\ref{r:spinglass}), $X$ is invariant, and it is
compact in the induced product topology.\ Analogously to previous examples,
we can easily show it is a bounded Lattice EDS, with $e,d,f$ chosen as
follows:%
\begin{eqnarray*}
e(\alpha ) &=&\frac{\lambda }{2}(\partial _{t}u)^{2}+\frac{1}{2}%
\sum_{\varepsilon =\pm \varepsilon _{1},\pm \varepsilon _{2}}L^{S(\alpha
,\varepsilon )}(u(\alpha ),u(\alpha +\varepsilon ))+V(u(\alpha )), \\
d(\alpha ) &=&(\partial _{t}u(\alpha ))^{2}, \\
f(\alpha ) &=&\sum_{\varepsilon =\pm \varepsilon _{1},\pm \varepsilon
_{2}}L_{1}^{S(\alpha ,\varepsilon )}(u(\alpha ),u(\alpha +\varepsilon
))\partial _{t}u(\alpha ).
\end{eqnarray*}

\subsection{The discrete complex Ginzburg-Landau equation}

The discrete Ginzburg-Landau equation is the spatial discretization of the
well-known complex Ginzburg-Landau equation (\cite{Aranson02}), also studied
independently, with solutions the time evolutions of functions $u:\mathbb{Z}%
^{N}\rightarrow \mathbb{C}$. The equations are given with%
\begin{equation}
u_{t}(\alpha )=(1+i\lambda )\triangle u+u-(1+i\lambda )|u|^{2}u\text{,}
\label{r:dcgl}
\end{equation}%
where $\lambda >0$ is a real parameter. We consider the dissipative case (in
the general case, the two occurrences of the constant $\lambda $ in (\ref%
{r:dcgl}) are not necessarily the same). Following \cite{Gallay:12}, we
define the auxiliary function $v(\alpha ,t)=u(\alpha ,t)e^{i\lambda t}$, and
then%
\begin{equation}
v_{t}(\alpha )=(1+i\lambda )(\triangle v+v-|v|^{2}v).  \label{r:dgl2}
\end{equation}%
One can check analogously to previous examples that (\ref{r:dgl2}) generates
a Lattice EDS with%
\begin{eqnarray*}
e(\alpha ) &=&\frac{1}{2}|\nabla u(\alpha )|^{2}+\frac{1}{4}(1-|v(\alpha
)|^{2})^{2}, \\
d(\alpha ) &=&\frac{|v_{t}(\alpha )|^{2}}{1+\lambda ^{2}}, \\
f(\alpha ) &=&\func{Re}\left( v_{t}(\alpha )\nabla ^{\ast }\overline{%
v(\alpha )}\right) \text{.}
\end{eqnarray*}%
One can show, by adapting the local energy estimates to the discrete case,
that it is also (by choosing a bounded invariant set $X$) a bounded Lattice
EDS on a compact set.

\section{Energy increasing solutions of Lattice EDS}

The fundamental property of a gradient systems is that its energy functional
(or Lyapunov function)$\ E(u(t))$ strictly decreases along any
non-stationary orbit. A natural question to ask in the framework of extended
dissipative systems on lattices is:\ are there any constraints on the set of
integers $R$, for which%
\begin{equation*}
E(R,T):=\int_{C^{\ast }(R)}e(\alpha ,T)d\alpha
\end{equation*}%
is for a given $T>0$ not smaller than $E(R,0)$. As already noted, we
associate the functions $e,d,f:\mathbb{Z}^{N}\times \lbrack 0,T]$ to a fixed
orbit $u(t)$ of a semiflow in $X$.

The key tool in the following will be the integral form of the energy
balance equality (A4), which follows from (A4) by the Stokes theorem and
integration over $[0,T]$:%
\begin{equation}
E(R,T)-E(R,0)=-\int_{0}^{T}\int_{C^{\ast }(R)}d(\alpha ,t)d\alpha
dt+\int_{0}^{T}\int_{\partial C^{\ast }(R)}f(\alpha ,t)n^{\ast }(\alpha
)d\alpha dt.  \label{r:IntBalance}
\end{equation}

We introduce the notation for the total dissipation in the ball $C^{\ast
}(R) $, and the total flux through $\partial C^{\ast }(R)$ up to the time $%
T>0$:%
\begin{eqnarray}
D(R,T) &=&\int_{0}^{T}\int_{C^{\ast }(R)}d(\alpha ,t)d\alpha dt,
\label{r:totald} \\
F(R,T) &=&\int_{0}^{T}\int_{\partial C^{\ast }(R)}f(\alpha ,t)n^{\ast
}(\alpha )d\alpha dt\text{.}  \label{r:totalf}
\end{eqnarray}%
Now (\ref{r:IntBalance}) can be written as%
\begin{equation}
F(R,T)=E(R,T)-E(R,0)+D(R,T)\text{.}  \label{r:balance2}
\end{equation}%
We will use that to prove the following restriction on the energy increasing
solutions in dimensions $N=1,2$. We denote by $J_{T}$ the following set%
\begin{equation*}
J_{T}=\{R\in \mathbb{N}\text{, }E(R,T)\geq E(R,0)\}\text{.}
\end{equation*}

\begin{theorem}
\label{t:finite}Assume $(X,\varphi )$ is a Lattice EDS, and choose
non-stationary $u\in X$.

(i) If \thinspace $N=1$, then $J_{T}$ is finite.

(ii)\ If $N=2$, then $\sum_{r\in J_{T}}1/r$ is finite.
\end{theorem}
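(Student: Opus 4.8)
The plan is to convert the condition $R\in J_{T}$ into a discrete differential inequality for the accumulated dissipation, using the integral balance (\ref{r:balance2}) together with the flux--dissipation inequality (A5), and then to sum that inequality by a telescoping argument.

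First I record the basic estimates along the fixed non-stationary orbit $u(t)$, $t\in[0,T]$. By (A2) the maps $t\mapsto\|e_{u(t)}\|_{\infty}$ and $t\mapsto\|d_{u(t)}\|_{\infty}$ are continuous, hence bounded on the compact interval $[0,T]$; write $\beta=\sup_{t\in[0,T]}b(\|e_{u(t)}\|_{\infty})<\infty$ and $M=\sup_{t\in[0,T]}\|d_{u(t)}\|_{\infty}<\infty$. For $R\in J_{T}$ the balance (\ref{r:balance2}) and $E(R,T)\ge E(R,0)$ give $F(R,T)\ge D(R,T)\ge 0$. On the other hand, since $|n^{\ast}|\le\sqrt{N}$ and $|f|^{2}\le\beta d$ along the orbit by (A5), the Cauchy--Schwarz inequality over $\partial C^{\ast}(R)\times[0,T]$ yields
\begin{equation*}
F(R,T)^{2}\le N\beta\,T\,|\partial C^{\ast}(R)|\int_{0}^{T}\!\!\int_{\partial C^{\ast}(R)}d(\alpha,t)\,d\alpha\,dt .
\end{equation*}

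Next I introduce the symmetric cubes $B(R)=\{\alpha\in\mathbb{Z}^{N}:|\alpha_{j}|\le R,\ j=1,\dots,N\}$ and the accumulated dissipation $a_{R}=\int_{0}^{T}\!\int_{B(R)}d$. Because the $\ell^{\infty}$-shells $\partial C^{\ast}(R)=B(R)\setminus B(R-1)$ are pairwise disjoint and exhaust $\mathbb{Z}^{N}$, the shell integral above equals $a_{R}-a_{R-1}$, and the containments $B(R-1)\subset C^{\ast}(R)\subset B(R)$ give $a_{R-1}\le D(R,T)\le a_{R}$. Writing $P(R)=|\partial C^{\ast}(R)|$ and combining $F(R,T)\ge D(R,T)\ge a_{R-1}$ with the displayed bound produces, for every $R\in J_{T}$,
\begin{equation*}
a_{R-1}^{2}\le N\beta\,T\,P(R)\,(a_{R}-a_{R-1}).
\end{equation*}
Since $u$ is non-stationary, if $a_{R}=0$ for all $R$ then $d\equiv 0$ on $\mathbb{Z}^{N}\times[0,T]$, so $u$ is stationary by (A3); hence $a_{R_{0}}>0$ for some $R_{0}$, and $a_{R-1}>0$ for all $R>R_{0}$ by monotonicity.

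It then remains to show $\sum_{R\in J_{T}}1/P(R)<\infty$, after which $P(R)=(2R+1)^{N}-(2R-1)^{N}$ gives $P(R)=2$ for $N=1$ (so $J_{T}$ is finite) and $P(R)=8R$ for $N=2$ (so $\sum_{r\in J_{T}}1/r<\infty$). I do this by splitting $J_{T}$ according to the jump size. For small steps ($a_{R}\le 2a_{R-1}$) the displayed inequality gives $1/P(R)\le 2N\beta T\,(a_{R-1}^{-1}-a_{R}^{-1})$, which telescopes against the decreasing sequence $a_{R}^{-1}$ and contributes at most $2N\beta T/a_{R_{0}}$. The large steps ($a_{R}>2a_{R-1}$) are the main obstacle, because the inequality carries the \emph{smaller} quantity $a_{R-1}^{2}$ on the left and so does not telescope by itself; here I use the a priori bound from (A2), namely $a_{R}-a_{R-1}=\int_{0}^{T}\!\int_{\partial C^{\ast}(R)}d\le M\,T\,P(R)$, which turns the displayed inequality into $a_{R-1}\le T\sqrt{N\beta M}\,P(R)$, i.e. $1/P(R)\le T\sqrt{N\beta M}/a_{R-1}$. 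Along the large steps the values $a_{R-1}$ at least double between consecutive occurrences, so $\sum 1/a_{R-1}$ is a geometric series bounded by $2/a_{R_{0}}$, and this part is finite as well. The only remaining care is the routine discrete-boundary bookkeeping relating $\partial C^{\ast}(R)$, $C^{\ast}(R)$ and $B(R)$, already used above through the sandwich $a_{R-1}\le D(R,T)\le a_{R}$.
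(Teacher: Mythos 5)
Your proof is correct, and while it starts from the same two ingredients as the paper -- the integral balance (\ref{r:balance2}) giving $F(R,T)\geq D(R,T)\geq 0$ for $R\in J_{T}$, and Cauchy--Schwarz plus (A5) on the shell giving $F(R,T)^{2}\leq N\beta T\,|\partial C^{\ast}(R)|\int_{0}^{T}\int_{\partial C^{\ast}(R)}d$ -- the way you sum the resulting Riccati-type inequality is genuinely different from the paper's. The paper keeps the flux as the dynamical variable: it constructs a minorizing recursive sequence $G_{k}\leq F(r_{k},T)$ indexed by the elements of $J_{T}$ (relations (\ref{r:oneb})--(\ref{r:twob})) and invokes Lemma \ref{l:Grecurrent}, whose telescoping of $1/G_{k-1}-1/G_{k}$ hinges on the uniform ratio bound $G_{k}/G_{k-1}\leq 1+\lambda C$, itself obtained from the a priori flux bound $F(r,T)\leq C\omega_{N}r^{N-1}$ coming from $\sup_{t\in[0,T]}\|f_{u(t)}\|_{\infty}<\infty$. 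You instead telescope directly on the cumulative dissipation $a_{R}$, which is monotone for free, and you replace the uniform ratio bound by the small/large-jump dichotomy: small jumps telescope against $a_{R-1}^{-1}-a_{R}^{-1}$, while large jumps are killed by the a priori bound $\sup_{t\in[0,T]}\|d_{u(t)}\|_{\infty}<\infty$ together with the forced geometric growth of $a_{R-1}$ along such jumps. This dispenses with the auxiliary comparison sequence and the induction $G_{k}\leq F(r_{k},T)$ altogether, at the cost of the case split; both routes use only (A1)--(A5) and land on the same conclusion $\sum_{R\in J_{T}}1/|\partial C^{\ast}(R)|<\infty$, which is exactly the paper's $\sum_{r\in J_{T}}1/r^{N-1}<\infty$. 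The only points worth tidying are cosmetic: the finitely many $R\leq R_{0}$ in $J_{T}$ should be set aside explicitly before dividing by $a_{R-1}$, and the shells $\partial C^{\ast}(R)$, $R\geq 1$, exhaust $\mathbb{Z}^{N}\setminus\{0\}$ rather than $\mathbb{Z}^{N}$ -- neither affects the argument.
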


The claim will follow from the following Lemma, whose proof is essentially a
discrete analogue of the separation of variables technique.

\begin{lemma}
\label{l:Grecurrent}Assume $J=\{r_{0}<r_{1}<...\}$ is a set of positive
integers, and assume there is a recurrent sequence $G_{k}$ satisfying the
following for some $\lambda >0$:%
\begin{eqnarray}
G_{0} &=&\varepsilon >0,  \label{r:recurrentone} \\
G_{k} &=&G_{k-1}+\lambda G_{k-1}^{2}/r_{k-1}^{N-1},  \label{r:recurrenttwo}
\\
&&G_{k-1}^{2}/r_{k-1}^{N-1}\text{ is bounded.}  \label{r:recurrentthree}
\end{eqnarray}

If $N=1$, then $J$ is finite. If $N=2$, then $\sum_{r\in J}1/r$ is finite.
\end{lemma}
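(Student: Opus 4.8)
The plan is to analyze the growth of the sequence $G_k$ defined by the nonlinear recurrence $G_k = G_{k-1} + \lambda G_{k-1}^2/r_{k-1}^{N-1}$ and derive a contradiction (in the case $N=1$) or a summability constraint (in the case $N=2$) from the boundedness condition \eqref{r:recurrentthree}, namely that $G_{k-1}^2/r_{k-1}^{N-1}$ stays bounded, say by some constant $C$. The key observation is that since $G_0 = \varepsilon > 0$ and each increment $\lambda G_{k-1}^2/r_{k-1}^{N-1}$ is nonnegative, the sequence $G_k$ is nondecreasing, so $G_k \geq \varepsilon$ for all $k$. This positivity is what drives the growth.

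First I would treat the case $N=1$. Here the recurrence reads $G_k = G_{k-1} + \lambda G_{k-1}^2 \geq G_{k-1} + \lambda \varepsilon^2$, so each step adds at least the fixed positive amount $\lambda \varepsilon^2$. Hence $G_k \geq \varepsilon + k\lambda\varepsilon^2 \to \infty$. But then $G_{k-1}^2/r_{k-1}^{N-1} = G_{k-1}^2$ also tends to infinity, contradicting the boundedness assumption \eqref{r:recurrentthree} unless $J$ has only finitely many elements. So $J$ must be finite, which is exactly claim (i). This case is clean because the $r$-dependence drops out entirely when $N=1$.

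The case $N=2$ is the main obstacle and requires the "separation of variables" idea the author alludes to. Now the recurrence is $G_k = G_{k-1} + \lambda G_{k-1}^2/r_{k-1}$, and since $G_{k-1} \geq \varepsilon$ we get the lower bound $G_k - G_{k-1} \geq \lambda \varepsilon G_{k-1}/r_{k-1}$, i.e. $G_k/G_{k-1} \geq 1 + \lambda\varepsilon/r_{k-1}$. Taking logarithms and using $\log(1+x) \geq x/(1+x)$ or a comparable elementary bound, I would obtain $\log(G_k/\varepsilon) = \sum_{j=0}^{k-1}\log(G_{j+1}/G_j) \gtrsim \sum_{j=0}^{k-1} 1/r_j$ (up to constants depending on $\lambda, \varepsilon$ and the uniform bound $C$ on $G_j/r_j$, which keeps the denominators controlled). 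On the other hand, the boundedness condition \eqref{r:recurrentthree} with $N=2$ says $G_{k-1}^2/r_{k-1} \leq C$, i.e. $r_{k-1} \geq G_{k-1}^2/C$, so $\log G_k$ controls $\log r_k$ from below. Combining these, if $\sum 1/r_j$ were to diverge, then $\log G_k \to \infty$, forcing $G_k$ to grow at least exponentially in $\sum 1/r_j$; feeding this back through $r_{k} \geq G_{k}^2/C$ shows the $r_k$ grow faster than any geometric rate, which should then force $\sum 1/r_k$ to converge after all — the anticipated contradiction.

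The delicate point, and where I expect the real work to lie, is making this self-referential argument rigorous: the growth of $G_k$ is governed by $\sum 1/r_j$, but $r_j$ is in turn bounded below by $G_j^2$, so one has a feedback loop that must be unwound carefully to conclude $\sum_{r\in J} 1/r < \infty$ rather than merely that $G_k$ grows. I would try to close the loop by setting $S_k = \sum_{j<k} 1/r_j$, showing $\log G_k \geq c\, S_k$ for a constant $c>0$, then using $r_k \geq G_k^2/C \geq (\varepsilon^2/C)e^{2cS_k}$ to bound $1/r_k \leq (C/\varepsilon^2)e^{-2cS_k}$, and finally observing that $S_{k+1}-S_k = 1/r_k$ decays geometrically against its own partial sums, which forces the series $\sum 1/r_k$ to converge by a standard comparison. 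The bookkeeping with the constant $C$ from \eqref{r:recurrentthree} and the constants hidden in the logarithm estimate is routine but must be tracked so that the final comparison series is genuinely summable.
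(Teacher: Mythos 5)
Your treatment of $N=1$ is correct, and is in fact a shortcut the paper does not take: since $G_{k-1}\geq\varepsilon$, each step adds at least $\lambda\varepsilon^{2}$, so $G_{k}\to\infty$ and \eqref{r:recurrentthree} forces $J$ to be finite. The case $N=2$, however, has a genuine gap at the final step. From $\log G_{k}\geq\log\varepsilon+c\,S_{k}$ and $r_{k}\geq G_{k}^{2}/C$ you arrive at $S_{k+1}-S_{k}=1/r_{k}\leq B\,e^{-2cS_{k}}$ with $B=C/\varepsilon^{2}$, and you assert that this forces $\sum 1/r_{k}<\infty$ ``by a standard comparison.'' It does not: the sequence $S_{k}=\frac{1}{2c}\log(k+1)$ satisfies $S_{k+1}-S_{k}\leq\frac{1}{2c}e^{-2cS_{k}}$ for every $k$ and still diverges; equivalently, $r_{k}\sim k$ is perfectly consistent with every inequality you derived. (The ODE heuristic makes this transparent: $S'=Be^{-2cS}$ integrates to $S\sim\frac{1}{2c}\log t$, so a step size decaying only like $e^{-2cS_{k}}$ permits logarithmic drift to infinity.) The loss occurs where you bound the increment $\lambda G_{k-1}^{2}/r_{k-1}$ below by $\lambda\varepsilon G_{k-1}/r_{k-1}$ in order to telescope $\log G_{k}$: that only yields $S_{k}=O(\log G_{k})$, and nothing in the hypotheses keeps $G_{k}$ bounded when $N=2$ (only $G_{k}\leq\sqrt{Cr_{k}}$).

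The repair is to telescope $1/G_{k}$ rather than $\log G_{k}$, which is exactly what the paper does for both dimensions at once. Setting $K:=1+\lambda C/\varepsilon$, relation \eqref{r:recurrenttwo} together with \eqref{r:recurrentthree} and $G_{k-1}\geq\varepsilon$ gives $G_{k}/G_{k-1}\leq K$, and hence
\begin{equation*}
\frac{\lambda}{r_{k-1}^{N-1}}=\frac{G_{k}-G_{k-1}}{G_{k-1}^{2}}=\frac{G_{k}}{G_{k-1}}\cdot\frac{G_{k}-G_{k-1}}{G_{k-1}G_{k}}\leq K\left(\frac{1}{G_{k-1}}-\frac{1}{G_{k}}\right),
\end{equation*}
whose right-hand side telescopes to at most $K/G_{0}=K/\varepsilon$ whether or not $G_{k}$ diverges. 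This yields $\sum_{r\in J}1/r^{N-1}\leq K/(\lambda\varepsilon)$ and both claims simultaneously. The difference between your computation and this one is precisely the difference between $\int^{\infty}dG/G$ diverging and $\int^{\infty}dG/G^{2}$ converging; only the latter closes the feedback loop you correctly identified as the delicate point.
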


\begin{proof}
As $G_{k}>0$, dividing (\ref{r:recurrenttwo}) by $G_{k-1}^{2}$ we get 
\begin{equation*}
\frac{\lambda }{r_{k-1}^{N-1}}=\frac{G_{k}}{G_{k-1}^{2}}-\frac{1}{G_{k-1}}%
\text{.}
\end{equation*}

Let $C=\sup_{k\geq 0}G_{k-1}^{2}/r_{k-1}^{N-1}$. Now (\ref{r:recurrenttwo})
implies that $G_{k}/G_{k-1}\leq 1+\lambda C$. From this and (\ref%
{r:recurrenttwo}) we get%
\begin{equation*}
\frac{G_{k}}{G_{k-1}^{2}}-\frac{1}{G_{k-1}}\leq (1+\lambda C)\left( \frac{1}{%
G_{k-1}}-\frac{1}{G_{k}}\right) .
\end{equation*}

Combining all of the above and (\ref{r:recurrentone}) we obtain%
\begin{equation*}
\sum_{r\in \mathcal{R}}\frac{1}{r^{N-1}}\leq \frac{(1+\lambda C)}{\lambda
\varepsilon }\text{.}
\end{equation*}
\end{proof}

We now prove Theorem \ref{t:finite}.

\begin{proof}
Let $\beta =\sup_{0\leq t\leq T}b(||e_{u(t)}||_{\infty })$ (by (A2), this is
finite as $||e_{u(t)}||_{\infty }$ is continuous along an orbit). Then (A5)\
becomes for $u(t),$ $t\in \lbrack 0,T]$, and all $\alpha \in \mathbb{Z}^{N}$%
, 
\begin{equation*}
f^{2}\leq \beta d\text{.}
\end{equation*}%
We now write the integral form of this and get (recall that $|n^{\ast
}(\alpha )|\leq \sqrt{N}$):

\begin{eqnarray*}
F^{2}(R,T) &=&\left( \int_{0}^{T}\int_{\partial C^{\ast }(R)}f(\alpha
,t)n^{\ast }(\alpha )d\alpha dt\right) ^{2} \\
&\leq &T|\partial C^{\ast }(r)|\sqrt{N}\int_{0}^{T}\int_{\partial C^{\ast
}(R)}f^{2}(\alpha ,t)d\alpha dt \\
&\leq &T|\partial C^{\ast }(R)|\sqrt{N}\cdot \beta
\int_{0}^{T}\int_{\partial C^{\ast }(R)}d(\alpha ,t)d\alpha dt\text{.}
\end{eqnarray*}%
We introduce the constant 
\begin{equation}
\omega _{N}=N^{3/2}2^{N},  \label{r:defomega}
\end{equation}%
and then $|\partial C^{\ast }(R)|\sqrt{N}=\omega _{N}R^{N-1}.$As $\partial
C^{\ast }(r)$ are disjoint, we now have%
\begin{equation}
D(R,T)\geq \frac{1}{\omega _{N}\beta T}\sum_{r=1}^{R-1}\frac{F^{2}(r,T)}{%
r^{N-1}}\text{.}  \label{r:dissflux}
\end{equation}

Combining that and (\ref{r:balance2}), we see that for all $R\in J_{T}$,%
\begin{eqnarray}
F(R,T) &\geq &\frac{1}{\omega _{N}\beta T}\sum_{r=1}^{R-1}\frac{F^{2}(r,T)}{%
r^{N-1}}  \notag \\
&\geq &\frac{1}{\omega _{N}\beta T}\sum_{r\in J_{T}\text{, }r\leq R-1\text{.}%
}\frac{F^{2}(r,T)}{r^{N-1}}.  \label{r:relation}
\end{eqnarray}

As $u$ is not stationary, there exists $R_{1}$ large enough so that $%
D(R_{1},T)\geq \varepsilon $ for some $\varepsilon >0$. If $J_{T}$ is
finite, the claim is in both cases proved. If not, by (\ref{r:balance2})
there is $R_{0}\in $ $J_{T}$, $R_{0}\geq R_{1}$ and then $F(R_{0},T)\geq
\varepsilon >0$. Now if $R_{0}=r_{0}<r_{1}<r_{2}<...\,\ $are all elements of 
$J_{T}$ greater or equal than $R_{0}$, we can define a recurrent sequence%
\begin{eqnarray}
G_{0} &=&\varepsilon >0,  \label{r:oneb} \\
G_{k} &=&\frac{1}{\omega _{N}\beta T}\sum_{j=0}^{k-1}\frac{G_{j}^{2}}{%
r_{j}^{N-1}}=G_{k-1}+\frac{1}{\omega _{N}\beta T}\frac{G_{k-1}^{2}}{%
r_{k-1}^{N-1}}\text{.}  \label{r:twob}
\end{eqnarray}

From this and (\ref{r:relation}) we easily deduce by induction that for all $%
k\geq 0$, $G_{k}\leq F(r_{k},T)$. However, by (A2), $t\mapsto
||f_{u(t)}||_{\infty }\,$is continuous, thus bounded on $[0,T]$ by some
constant $C$. By definition then we have for all $r>0$, thus $F(r,T)\leq
C\omega _{N}r^{N-1}$. We deduce that $G_{k}/r_{k}^{N-1}$ is bounded. The
claim now follows from Lemma \ref{l:Grecurrent}.
\end{proof}

If the dimension $N=1$ (as already shown in \cite{Slijepcevic00}), the
theorem \ref{t:finite} can be rephrased as follows:\ given any
non-stationary $u\in X$ and any $T>0$, there exists $R_{0}$ such that for
all $R>R_{0}$,%
\begin{equation}
E(R,T)<E(R,0)\text{.}  \label{r:decreasing}
\end{equation}

Note that the choice of $R_{0}$ is typically non-uniform:\ it depends on $%
T>0 $ and the choice of $u\in X$.

In dimension $N=2$, the set of all $R$ for which (\ref{r:decreasing}) holds
has $0$ density, as the condition that $\sum_{r\in J_{T}}1/r$ is finite is
equivalent to%
\begin{equation*}
|\mathcal{R}(u,T)\cap \{1,..,K\}|=O\left( \frac{K}{\log K}\right) .
\end{equation*}

\begin{remark}
\label{r:finite}In dimensions $N\geq 3$, the rationale above does not bring
any new insight, as $\sum_{r\in \mathbb{N}}1/r^{N-1}$ is always finite.
\end{remark}

\begin{corollary}
\label{c:periodic}If $(X,\varphi )$ is a Lattice EDS in dimensions $N=1$ or~$%
2$, it has no periodic orbits of period $T>0$.
\end{corollary}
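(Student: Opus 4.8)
The plan is to deduce this directly from Theorem \ref{t:finite} by a contradiction argument. Suppose $(X,\varphi)$ possessed a periodic orbit of period $T>0$; by this I understand a genuine, i.e. non-stationary, orbit, since stationary points are trivially fixed by the semiflow and are excluded from the notion of a periodic orbit. Thus I would fix a non-stationary $u\in X$ with $\varphi(u,T)=u$, so that as elements of the state space $u(T)=u(0)$.

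The crucial observation is that the local energy $e$ is, by definition, a function of the state alone (one of the maps $e:X\rightarrow l_\infty(\mathbb{Z}^N)$). Hence periodicity at the level of states propagates to the energy: since $u(T)=u(0)$, I get $e(\alpha,T)=e_{u(T)}(\alpha)=e_{u(0)}(\alpha)=e(\alpha,0)$ for every lattice point $\alpha\in\mathbb{Z}^N$. Integrating over $C^\ast(R)$ then yields $E(R,T)=E(R,0)$ for every $R\in\mathbb{N}$, and in particular $E(R,T)\geq E(R,0)$ for all $R$. Therefore the set $J_T=\{R\in\mathbb{N}:E(R,T)\geq E(R,0)\}$ equals all of $\mathbb{N}$.

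Now I would invoke Theorem \ref{t:finite}, which applies precisely because $u$ was chosen non-stationary. In dimension $N=1$ the theorem asserts that $J_T$ is finite, contradicting $J_T=\mathbb{N}$. In dimension $N=2$ the theorem asserts that $\sum_{r\in J_T}1/r$ is finite, but with $J_T=\mathbb{N}$ this is the harmonic series $\sum_{r\geq 1}1/r=+\infty$, again a contradiction. In either case the assumed periodic orbit cannot exist.

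I do not expect any serious obstacle: the argument is a short structural consequence of the theorem. The only point requiring a moment's care is the interpretation of \emph{periodic orbit} as non-stationary, which is exactly the hypothesis under which Theorem \ref{t:finite} was stated; one must simply note that the divergence of the harmonic series (for $N=2$) and the infinitude of $\mathbb{N}$ (for $N=1$) are what clash with the finiteness conclusions of the theorem.
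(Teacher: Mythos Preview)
Your proof is correct and takes essentially the same approach as the paper: the paper's proof simply notes that for a periodic orbit $E(R,T)=E(R,0)$ for all $R$, contradicting Theorem~\ref{t:finite}. You have merely spelled out the details (that $e$ depends only on the state, that $J_T=\mathbb{N}$, and that this clashes with finiteness of $J_T$ for $N=1$ and divergence of the harmonic series for $N=2$).
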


\begin{proof}
For periodic orbits of period $T>0$, $E(R,T)=E(R,0)\,\ $must hold for all $R$%
, which is in contradiction with Theorem \ref{t:finite}.
\end{proof}

\section{The flux and dissipation bounds \label{s:fluxbounds}}

In previous section, we investigated constraints on the energy of a cube $%
C^{\ast }(R)$ at a time $T>0$. Here we will use similar rationale to develop
bounds on the total flux $F(R,T)$ through a boundary $\partial C^{\ast }(R)$
of the cube up to a time $T>0$ (see (\ref{r:totalf}) for the definition of $%
F(R,T)$). In this Section we will assume that a Lattice EDS is bounded, with
the constant $\beta $ such that (\ref{r:beta}) holds.\ We fix an orbit $u(t)$%
, and use the notation $e_{0}=||e_{u(0)}||_{\infty }$.

We first state the flux bounds which will be proved in the following. Let $%
T>0$, and $N$ as usual the dimension of Lattice EDS. As we are typically
interested in the asymptotic behavior of our Lattice EDS, we develop the
bounds valid for large $T$ as compared to $R$.

\begin{description}
\item[(i)] If $N=1$, then 
\begin{equation}
\lim \sup_{T\rightarrow \infty }\frac{1}{\sqrt{T}}F(R,T)\leq 2\sqrt{\beta
e_{0}}.  \label{flux1}
\end{equation}

\item[(ii)] If $N=2$, then 
\begin{equation}
\lim \sup_{T\rightarrow \infty }\frac{\log T}{T}F(R,T)\leq 96\sqrt{2}\cdot
\beta .  \label{flux2}
\end{equation}

\item[(iii)] If $N\geq 3$, then%
\begin{equation}
\lim \sup_{T\rightarrow \infty }\frac{1}{T}F(R,T)=N^{5/2}2^{2N-2}\cdot \beta
R^{N-2}.  \label{flux3}
\end{equation}
\end{description}

Let us first discuss the significance of the bounds above. A simple a-priori
analysis can yield only the bound $F(R,T)=O(TR^{N-1})$, i.e. that for
bounded Lattice EDS, the maximal flux through a boundary of a cube $%
C_{R}^{\ast }$ is at most proportional to time and the size of the cube. The
bounds above are much better. In low dimensions $N=1,2$, we see that the
flux can not be proportional to the total time spent. We will see later that
because of that, our Lattice EDS will be for almost all times arbitrarily
close to equilibria. In the dimension $N\geq 3$, we see that the flux can
grow proportionally with time, but only proportionally with $R^{N-2}$. This
will mean that our dynamics may have persistent non-equilibrium behavior
only on the sets of the size $\sim R^{N-2}$, i.e. two dimensions less than
the space dimension.

We will prove the flux bounds by showing that $F(r,T)\leq G_{r}$, where $%
G_{r}$ is a recurrent sequence defined for integer $r$, satisfying

\begin{eqnarray}
G_{r+1} &=&G_{r}-\lambda r^{N-1}+\varepsilon \frac{G_{r}^{2}}{r^{N-1}},
\label{r:recurrent1} \\
&&G_{r}/r^{N-1}\,\ \text{is bounded.}  \label{r:recurrent2}
\end{eqnarray}

As we will see later, here $\lambda \sim e_{0}$, $\varepsilon \sim 1/(\beta
T)$. The analysis of the recurrent sequence for different $N$ is similar to
the proof of Lemma \ref{l:Grecurrent}, but significantly more technical.
Therefore the proof of the following two Lemmas is given in the Appendix.

\begin{lemma}
\label{l:ricatti1} Assume $G_{r}$, $r\geq 1$ satisfies (\ref{r:recurrent1}),
(\ref{r:recurrent2}) for some $\lambda ,\varepsilon >0$.

\begin{description}
\item[(i)] If $N=1$, then%
\begin{equation}
G_{r}\leq \sqrt{\frac{\lambda }{\varepsilon }}.  \label{r:bound1}
\end{equation}

\item[(ii)] If $N\geq 2$, then%
\begin{equation}
G_{r}\leq (N-1)\left( 1+\frac{1}{r}\right) ^{N-2}\frac{r^{N-2}}{\varepsilon }%
+\sqrt{\frac{\lambda }{\varepsilon }}r^{N-1}.  \label{r:boundall}
\end{equation}
\end{description}
\end{lemma}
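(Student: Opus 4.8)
The plan is to recast both parts of the lemma as the single statement $G_r\le B_r$, where I set $c=\sqrt{\lambda/\varepsilon}$ and
\[
B_r=\frac{(N-1)(r+1)^{N-2}}{\varepsilon}+c\,r^{N-1}.
\]
Using the identity $(1+1/r)^{N-2}r^{N-2}=(r+1)^{N-2}$, this $B_r$ is precisely the right-hand side of (\ref{r:boundall}); and for $N=1$ the first term drops out, leaving $B_r\equiv c$, which is (\ref{r:bound1}). Writing $\Phi_r(G)=G-\lambda r^{N-1}+\varepsilon G^2/r^{N-1}$ for the map defining the recurrence (\ref{r:recurrent1}), so that $G_{r+1}=\Phi_r(G_r)$, the entire argument reduces to the single inequality that $B_r$ is a \emph{subsolution}, namely $\Phi_r(B_r)\ge B_{r+1}$ for all $r\ge 1$.

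Granting the subsolution property, I would argue by contradiction. Suppose $G_{r_0}>B_{r_0}$ for some $r_0$ and put $E_r=G_r-B_r$, so $E_{r_0}>0$. Expanding the quadratic $\Phi_r$ exactly gives
\[
G_{r+1}=\Phi_r(B_r+E_r)=\Phi_r(B_r)+E_r\Big(1+\frac{2\varepsilon B_r}{r^{N-1}}\Big)+\frac{\varepsilon E_r^2}{r^{N-1}}.
\]
Since $B_r\ge 0$, whenever $E_r>0$ the last term is nonnegative, and the subsolution property gives $\Phi_r(B_r)\ge B_{r+1}$; hence
\[
E_{r+1}\ge E_r\Big(1+\frac{2\varepsilon B_r}{r^{N-1}}\Big)\ge E_r\big(1+2\sqrt{\lambda\varepsilon}\big),
\]
where I used $B_r/r^{N-1}\ge c$. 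By induction $E_r>0$ for all $r\ge r_0$ and $E_r$ grows at least geometrically with ratio $1+2\sqrt{\lambda\varepsilon}>1$. As $B_r\ge 0$, this yields $G_r/r^{N-1}\ge E_r/r^{N-1}\to\infty$, contradicting the boundedness hypothesis (\ref{r:recurrent2}). Therefore $G_r\le B_r$ for every $r$, which is the claim. I note that $N=1$ is handled uniformly by this same scheme: there $B_r\equiv c$, the subsolution inequality holds with equality, and the geometric-escape step forces $G_r\le c=\sqrt{\lambda/\varepsilon}$.

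The substantive work, and the step I expect to be the main obstacle, is verifying the subsolution inequality $\Phi_r(B_r)\ge B_{r+1}$. After substituting $B_r$ and using $\varepsilon c^2=\lambda$ to cancel the $\pm\lambda r^{N-1}$ contributions, the inequality splits cleanly into a part linear in $c$ and a part carrying the factor $1/\varepsilon$. The $c$-part reduces to $r^{N-1}+2(N-1)(r+1)^{N-2}\ge (r+1)^{N-1}$, which follows from the mean value estimate $(r+1)^{N-1}-r^{N-1}\le (N-1)(r+1)^{N-2}$. The $1/\varepsilon$-part reduces to $(r+1)^{N-2}+(N-1)(r+1)^{2N-4}/r^{N-1}\ge (r+2)^{N-2}$, which again follows from a mean value bound on $(r+2)^{N-2}-(r+1)^{N-2}$ together with the monotonicity of $(1+1/r)^{m}$ in $r$ and $m$ (checking $N=2$ directly, where the right-hand difference vanishes). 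These are elementary but fiddly binomial and monotonicity estimates, and it is exactly this computation that I would relegate to the Appendix, the conceptual content being entirely contained in the subsolution-plus-geometric-escape argument above.
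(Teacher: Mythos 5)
Your proof is correct, but it takes a genuinely different route from the paper's. The paper substitutes $g_r=G_r/r^{N-1}$, compactifies the resulting planar map at $r=+\infty$, and invokes the central manifold theorem to produce a decreasing critical curve $g_s(r)$ terminating at the fixed point $+\sqrt{\lambda/\varepsilon}$; boundedness of $G_r/r^{N-1}$ forces $G_r\le g_s(r)r^{N-1}$, and the bounds (\ref{r:bound1}), (\ref{r:boundall}) are then extracted by solving the quadratic inequality $g_s(r)-\lambda+\varepsilon g_s(r)^2<g_s(r)\left((r+1)/r\right)^{N-1}$ that follows from the monotonicity of $g_s$. You instead exhibit the right-hand side $B_r$ of (\ref{r:boundall}) directly as an explicit barrier satisfying $\Phi_r(B_r)\ge B_{r+1}$, and you replace the invariant-manifold dichotomy by the elementary geometric-escape estimate $E_{r+1}\ge E_r(1+2\sqrt{\lambda\varepsilon})$ for the excess $E_r=G_r-B_r$, which contradicts (\ref{r:recurrent2}) since exponential growth beats $r^{N-1}$. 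I checked the two computational inequalities you defer, namely $r^{N-1}+2(N-1)(r+1)^{N-2}\ge(r+1)^{N-1}$ and $(r+1)^{N-2}+(N-1)(r+1)^{2N-4}/r^{N-1}\ge(r+2)^{N-2}$, and both do follow from the mean value bounds and monotonicity of $(1+1/r)^m$ that you indicate (with $N=2$ checked separately), so there is no gap. Your route is more elementary and self-contained, avoiding the compactification and central manifold theorem entirely, and it treats $N=1$ and $N\ge2$ uniformly. What the paper's heavier machinery buys is the full trichotomy of Lemma \ref{l:unstable}: the existence and strict monotonicity of the exact critical curve $g_s$, which the paper reuses both to discuss sharpness of (\ref{r:bound1}) and (\ref{r:boundall}) and, more importantly, in the proof of the refined two-dimensional bound of Lemma \ref{l:ricatti2}, where the facts $G_r\ge r\sqrt{\lambda/\varepsilon}$ and $G_{r+1}/G_r\le 1+1/r$ along that curve are essential. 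Your argument fully proves Lemma \ref{l:ricatti1} as stated, but would need to be supplemented to recover that additional structure.
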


As we will see in the Appendix, the bound (\ref{r:bound1}) in the case $N=1$
is sharp, and (\ref{r:boundall}) in the cases $N\geq 3$ is sharp up to a
constant. We will be interested in the asymptotic case $\varepsilon
\rightarrow 0$ (i.e. $T\rightarrow \infty )$, so in the case $N=2$ we can
say something more.

\begin{lemma}
\label{l:ricatti2}Assume $G_{r}$, $r\geq 1$ satisfies (\ref{r:recurrent1}), (%
\ref{r:recurrent2}). Assume also $N=2$ and $r\sqrt{\lambda \varepsilon }\leq
1/2$. Then%
\begin{equation}
G_{r}\leq \frac{12}{-\varepsilon \log (2r^{2}\lambda \varepsilon )}\text{.}
\label{r:bound2}
\end{equation}
\end{lemma}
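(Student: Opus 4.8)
The plan is to study the recurrence
\begin{equation*}
G_{r+1}=G_r-\lambda r+\varepsilon\frac{G_r^2}{r}
\end{equation*}
(specializing \eqref{r:recurrent1} to $N=2$) under the standing hypothesis $r\sqrt{\lambda\varepsilon}\le 1/2$ and the boundedness condition \eqref{r:recurrent2}. The guiding idea is the same separation-of-variables (Riccati comparison) philosophy used in Lemma~\ref{l:Grecurrent} and Lemma~\ref{l:ricatti1}: the discrete increment $G_{r+1}-G_r$ contains a dissipative-looking term $-\lambda r$ and a quadratic amplification term $\varepsilon G_r^2/r$, and the goal is to show the latter cannot push $G_r$ above the claimed logarithmic bound. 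I would first normalize by setting $H_r=\varepsilon G_r/r$ (or a close variant), turning the recurrence into one whose natural comparison is the continuous Riccati equation $H'=H^2/x-\text{const}$, whose solutions blow up in finite ``time'' unless the coefficient is controlled. The quantity $-\log(2r^2\lambda\varepsilon)$ appearing in \eqref{r:bound2} is exactly the logarithmic scale on which such a Riccati solution stays bounded, so I expect the proof to track $1/G_r$ and show it decreases by a controlled amount at each step, giving a logarithmic lower bound on $1/G_r$.

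**Main steps.** First I would derive, from \eqref{r:recurrent1} with $N=2$ and positivity of $G_r$, the one-step inequality obtained by dividing by $G_rG_{r+1}$ (or by $G_r^2$), isolating a telescoping expression in $1/G_r$ as in the proof of Lemma~\ref{l:Grecurrent}. Dropping the favorable $-\lambda r$ term gives an upper bound on how fast $G_r$ can grow; but here I need a genuine lower bound on $1/G_r$, so I must keep the $-\lambda r$ term and use the hypothesis $r\sqrt{\lambda\varepsilon}\le 1/2$ to bound the ratio $G_{r+1}/G_r$ from above by a constant close to $1$. That constant-ratio bound, combined with the telescoped sum of $\varepsilon/r$, should produce
\begin{equation*}
\frac{1}{G_r}\ge c\,\varepsilon\sum_{j\le r}\frac{1}{j}-(\text{lower-order}),
\end{equation*}
and since $\sum_{j\le r}1/j\sim\log r$, inverting yields the $1/(\varepsilon\log)$ shape. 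The constant $12$ and the precise argument $2r^2\lambda\varepsilon$ of the logarithm would emerge from carefully accounting for where the $-\lambda r$ subtraction balances the quadratic term and from the factor-of-two slack in $r\sqrt{\lambda\varepsilon}\le 1/2$.

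**Main obstacle.** The hard part will be controlling the two competing terms \emph{simultaneously}: the $-\lambda r$ term is what prevents blow-up, but to exploit it I need an a priori two-sided control on $G_r$, and the only input is the soft boundedness \eqref{r:recurrent2}. I expect the key technical step to be establishing that under $r\sqrt{\lambda\varepsilon}\le1/2$ the sequence $H_r=\varepsilon G_r/r$ stays below a fixed threshold (say $H_r\le 1$ or similar), so that $G_{r+1}/G_r$ is uniformly bounded and the telescoping sum in $1/G_r$ does not lose its logarithmic accumulation. This is where the sharpness of the constant lives, and where I anticipate the argument being ``significantly more technical,'' matching the remark in the text that relegates it to the Appendix; I would handle it by a bootstrapping or induction argument showing $G_r$ cannot first exceed the claimed bound, using \eqref{r:recurrent1} to rule out an upward crossing.
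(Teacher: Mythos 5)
Your overall philosophy (Riccati comparison, tracking $1/G_r$, harmonic sums producing logarithms) points in the right direction, but two essential ingredients are missing, and one of your concrete steps goes the wrong way. First, the source of the logarithm: you propose to telescope $1/G_j-1/G_{j+1}$ over $j\le r$ and obtain $1/G_r\gtrsim \varepsilon\sum_{j\le r}1/j\sim\varepsilon\log r$. Telescoping from $1$ to $r$ yields $1/G_1-1/G_r\ge(\cdots)$, i.e.\ an \emph{upper} bound on $1/G_r$, which is useless for (\ref{r:bound2}); and in any case the claimed bound involves $-\log(2r^2\lambda\varepsilon)$, not $\log r$ --- for fixed $\lambda,\varepsilon$ this quantity \emph{decreases} in $r$, so no sum over $j\le r$ can produce it. The paper instead telescopes \emph{forward} over a window $s=r,\dots,kr-1$, drops $1/G_{kr}\ge 0$, keeps the loss term $-\lambda\sum_s s/G_s^2\ge -\lambda k^2r^2/(2G_r^2)$, and then \emph{optimizes over the window length} $k\sim(2r^2\lambda\varepsilon)^{-1/3}$; it is this optimization, carried out under the hypothesis $r\sqrt{\lambda\varepsilon}\le 1/2$, that produces $\log k\ge-\tfrac13\log(2r^2\lambda\varepsilon)$ and hence the constant $12$.

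Second, the a priori control you flag as the ``main obstacle'' is not obtained in the paper by bootstrapping on the sequence itself. The paper first proves Lemma \ref{l:unstable}: after substituting $g_r=G_r/r^{N-1}$ and compactifying $r=\infty$, the recurrence becomes a planar map with a saddle-type fixed point at $(\infty,\sqrt{\lambda/\varepsilon})$, and the boundedness condition (\ref{r:recurrent2}) forces $(r,g_r)$ to lie on or below its (decreasing) stable manifold $g_s(r)$. It therefore suffices to bound the stable-manifold solution itself, and only for \emph{that} solution do you get the two facts your telescoping needs: $G_r\ge r\sqrt{\lambda/\varepsilon}>0$ (hence $G_{r+1}\ge G_r$, so the step $\frac{G_{r+1}-G_r}{G_r^2}\le 2\bigl(\frac{1}{G_r}-\frac{1}{G_{r+1}}\bigr)$ is legitimate) and $G_{r+1}/G_r\le 1+1/r\le 2$ (from monotonicity of $g_s$). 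A general sequence satisfying (\ref{r:recurrent1})--(\ref{r:recurrent2}) need not be positive or monotone, so your proposed induction that ``$G_r$ cannot first exceed the claimed bound'' has no evident base for these ratio estimates; without the stable-manifold reduction the plan does not close.
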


We will use it to prove the following general flux bounds:

\begin{description}
\item[(i)] If $N=1$, then 
\begin{equation}
F(R,T)\leq 2\sqrt{\beta e_{0}T}.  \label{r:general1}
\end{equation}

\item[(ii)] If $N=2$ and $64R^{2}e_{0}\leq \omega _{2}\beta T$, then 
\begin{equation}
F(R,T)\leq \frac{12\omega _{2}\beta T}{\log (\omega _{2}\beta
T/(64e_{0}R^{2}))}.  \label{r:general2}
\end{equation}

\item[(iii)] If $N\geq 2,$ then%
\begin{equation}
F(R,T)\leq (N-1)\left( 1+\frac{1}{R}\right) ^{N-2}\omega _{N}R^{N-2}\beta
T+2^{N}\sqrt{\omega _{N}\cdot N}\cdot R^{N-1}\sqrt{e_{0}\beta T}\text{.}
\label{r:general3}
\end{equation}
\end{description}

\begin{theorem}
Assume a bounded Lattice EDS is given, with the constants $\beta $ and $%
e_{0} $ as above. Then for all $T>0$, $R\geq 1$, (\ref{r:general1}), (\ref%
{r:general2}) and (\ref{r:general3}) hold.
\end{theorem}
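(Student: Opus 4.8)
The plan is to derive a recurrence of the form (\ref{r:recurrent1})--(\ref{r:recurrent2}) for the total flux $F(r,T)$ and then invoke Lemmas \ref{l:ricatti1} and \ref{l:ricatti2} to read off the three stated bounds. The starting point is the integral energy balance (\ref{r:balance2}), $F(R,T)=E(R,T)-E(R,0)+D(R,T)$, together with the flux-dissipation estimate already used in the proof of Theorem \ref{t:finite}. Since the Lattice EDS is bounded, we have $f^{2}\leq\beta d$ everywhere (inequality (\ref{r:beta})), and exactly as in (\ref{r:dissflux}) the Cauchy--Schwarz argument over $\partial C^{\ast}(r)$ gives $D(R,T)\geq\frac{1}{\omega_{N}\beta T}\sum_{r=1}^{R-1}F^{2}(r,T)/r^{N-1}$.

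Next I would control the energy difference $E(R,T)-E(R,0)$ from above. Since $e\geq 0$ by (A1), we have $E(R,T)-E(R,0)\leq E(R,T)$, but the useful bound goes the other way: to produce the $-\lambda r^{N-1}$ term in the recurrence I would instead write the telescoping increment $E(r+1,T)-E(r,T)$ controlled by the uniform energy bound $e_{0}$. Concretely, combining (\ref{r:balance2}) at successive radii with the dissipation lower bound, one obtains an inequality of the shape
\begin{equation*}
F(r+1,T)\leq F(r,T)-\lambda r^{N-1}+\varepsilon\frac{F(r,T)^{2}}{r^{N-1}},
\end{equation*}
where $\lambda$ is comparable to $e_{0}$ (coming from the maximal energy that can be shed from the shell $C^{\ast}(r+1)\setminus C^{\ast}(r)$) and $\varepsilon$ is comparable to $1/(\omega_{N}\beta T)$ (coming from the dissipation term). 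The boundedness condition (\ref{r:recurrent2}), namely $F(r,T)/r^{N-1}$ bounded, follows from the a priori estimate $F(r,T)\leq C\omega_{N}r^{N-1}$ already established at the end of the proof of Theorem \ref{t:finite}, using continuity of $t\mapsto\|f_{u(t)}\|_{\infty}$ from (A2).

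With the recurrence in hand, the three cases are immediate substitutions. For $N=1$, Lemma \ref{l:ricatti1}(i) gives $F(R,T)\leq\sqrt{\lambda/\varepsilon}$, and plugging $\lambda\sim e_{0}$, $\varepsilon\sim1/(\beta T)$ yields (\ref{r:general1}) with the explicit constant $2\sqrt{\beta e_{0}T}$. For $N\geq2$, Lemma \ref{l:ricatti1}(ii) gives (\ref{r:general3}) directly after inserting $\varepsilon=1/(\omega_{N}\beta T)$ and $\lambda=e_{0}$ and multiplying through by the appropriate powers; the factor $\omega_{N}R^{N-2}\beta T$ and the $2^{N}\sqrt{\omega_{N}N}R^{N-1}\sqrt{e_{0}\beta T}$ tail are precisely the two terms of (\ref{r:boundall}) rescaled. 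For the sharp $N=2$ statement (\ref{r:general2}), I would apply Lemma \ref{l:ricatti2} under its hypothesis $r\sqrt{\lambda\varepsilon}\leq1/2$, which translates into the stated condition $64R^{2}e_{0}\leq\omega_{2}\beta T$ once the values of $\lambda,\varepsilon$ are substituted, giving the logarithmic bound.

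The main obstacle is pinning down the exact constants $\lambda$ and $\varepsilon$ in the recurrence and verifying that the bookkeeping matches the precise numerical factors claimed in (\ref{r:general1})--(\ref{r:general3}). The dissipation lower bound contributes $\omega_{N}=N^{3/2}2^{N}$ through $|\partial C^{\ast}(r)|\sqrt{N}$, while the energy term must be bounded shell-by-shell by $e_{0}$ times the shell cardinality, and keeping these consistent across the telescoping so that the hypotheses of the two Appendix lemmas hold verbatim is the delicate part. Everything else is a mechanical translation of the Riccati-type estimates into flux bounds.
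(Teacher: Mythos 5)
Your skeleton matches the paper's: combine the energy balance (\ref{r:balance2}) with the Cauchy--Schwarz dissipation bound (\ref{r:dissflux}), obtain a Riccati-type recurrence with $\lambda\sim e_{0}$ and $\varepsilon\sim 1/(\omega_{N}\beta T)$, and feed it to Lemmas \ref{l:ricatti1} and \ref{l:ricatti2}. But there is a genuine gap in the middle, and it is not just bookkeeping. The inequality you can actually derive from these ingredients is a \emph{lower} bound on the flux, $F(R,T)\geq -\lambda\sum_{r=1}^{R-1}r^{N-1}+\varepsilon\sum_{r=1}^{R-1}F(r,T)^{2}/r^{N-1}$ (dissipation pushes $F$ up, and only the initially available energy $E(R,0)\leq 2^{N}R^{N}e_{0}$ pushes it down). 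You instead assert the reversed one-step inequality $F(r+1,T)\leq F(r,T)-\lambda r^{N-1}+\varepsilon F(r,T)^{2}/r^{N-1}$. That inequality does not follow from a dissipation \emph{lower} bound, and even if it did hold it would be useless: a sequence satisfying an upper-bound recursion with a $+\varepsilon x^{2}$ term can be arbitrarily large, so no bound on $F$ can be "read off". Relatedly, Lemmas \ref{l:ricatti1} and \ref{l:ricatti2} are stated for sequences satisfying the recurrence \emph{equality} (\ref{r:recurrent1}), so they cannot be applied to $F$ directly; the missing step is the comparison argument. One must define the exact Riccati orbit $G$ with $G_{r_{0}}=F(r_{0},T)$, use the (correctly oriented) lower bound on $F$ to show $G_{r}\leq F(r,T)$ for $r\geq r_{0}$, deduce from the a priori estimate $F(r,T)\leq C\omega_{N}r^{N-1}$ that $G_{r}/r^{N-1}$ stays bounded, and only then invoke the lemmas -- whose real content (Lemma \ref{l:unstable}) is that any exact orbit starting \emph{above} the stable manifold blows up faster than $r^{N-1}$, forcing $F(r_{0},T)=G_{r_{0}}$ to lie below the explicitly bounded stable manifold.

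A secondary issue: your shell-by-shell telescoping of $E$ and $D$ does not quite work with the paper's definitions, because $\partial C^{\ast}(r)$ is not contained in the shell $C^{\ast}(r+1)\setminus C^{\ast}(r)$ (part of it lies inside $C^{\ast}(r)$), so the one-step dissipation increment does not control $F(r,T)^{2}/r^{N-1}$. The paper avoids this by using the global form: the boundaries $\partial C^{\ast}(r)$, $1\leq r\leq R-1$, are pairwise disjoint and all contained in $C^{\ast}(R)$, which yields (\ref{r:dissflux}) in one stroke, together with the elementary bound $R^{N}\leq N2^{N}\sum_{r=1}^{R-1}r^{N-1}$ to distribute the initial energy over the shells. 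Your identification of the constants $\lambda,\varepsilon$, of the a priori bound from (A2), and of how the hypothesis of Lemma \ref{l:ricatti2} translates into the condition $64R^{2}e_{0}\leq\omega_{2}\beta T$ is essentially right; it is the logical direction of the comparison that needs to be fixed.
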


\begin{proof}
The first step is writing again the energy balance equation. As $|C^{\ast
}(R)|=2^{N}R^{N}$ and $e_{0}=||e_{u(0)}||_{\infty }$, we have $E(R,0)\leq
2^{N}R^{N}e_{0}$. Inserting that and $e\geq 0$ in (\ref{r:balance2}) we get%
\begin{equation}
F(R,T)\geq -2^{N}R^{N}e_{0}+D(R,T)\text{.}  \label{r:balance3}
\end{equation}

As in the proof of Theorem \ref{t:finite}, (\ref{r:dissflux}) holds.
Inserting it into (\ref{r:balance3}), we get%
\begin{eqnarray*}
F(R,T) &\geq &-2^{N}R^{N}e_{0}+\frac{1}{\omega _{N}\beta T}\sum_{r=1}^{R-1}%
\frac{F^{2}(r,T)}{r^{N-1}} \\
&\geq &-2^{2N}e_{0}N\sum_{r=1}^{R-1}r^{N-1}+\frac{1}{\omega _{N}\beta T}%
\sum_{r=1}^{R-1}\frac{F^{2}(r,T)}{r^{N-1}}\text{,}
\end{eqnarray*}%
where we used 
\begin{equation*}
R^{N}\leq 2^{N}(R-1)^{N}\leq N2^{N}\int_{0}^{R-1}r^{N-1}dr\leq
N2^{N}\sum_{r=1}^{R-1}r^{N-1}.
\end{equation*}%
Now we can set 
\begin{eqnarray*}
\lambda  &=&2^{2N}e_{0}N, \\
\varepsilon  &=&1/(\omega _{N}\beta T).
\end{eqnarray*}%
We define the recurrent sequence for some given $G_{1}\in \mathbb{R}$:%
\begin{equation}
G_{R}=-\lambda \sum_{r=1}^{R-1}r^{N-1}+\varepsilon \sum_{r=1}^{R-1}\frac{%
G_{r}^{2}}{r^{N-1}}.  \label{r:gdef}
\end{equation}

From this, it is easy to see that for all $r\geq 1$ we have%
\begin{equation*}
G_{r+1}=G_{r}-\lambda r^{N-1}+\varepsilon \frac{G_{k-1}^{2}}{r^{N-1}}.
\end{equation*}

Now, if for some $r_{0}$, $G_{r_{0}}=F(r_{0},T),$ inductively we have that
for all $r\geq r_{0}$, $G_{r}\leq F(r,T)$. We can also see that $%
F(r,T)/r^{N-1}$ is bounded, as the flux $||f||_{\infty }$ is uniformly
bounded on $[0,T]$ along the semiorbit $u(t)$ by (A2). Thus we have 
\begin{equation*}
F(r,T)\leq \omega _{N}r^{N-1}\sup_{t\in \lbrack 0,T]}||f_{u(t)}||_{\infty }.
\end{equation*}

We conclude that $\sup_{r\geq 0}G_{r}/r^{N-1}<\infty $. We finally apply
Lemmas \ref{l:ricatti1} and \ref{l:ricatti2}, and insert the definitions of $%
\lambda ,\varepsilon $ into (\ref{r:bound1}), (\ref{r:boundall}) and (\ref%
{r:bound2})\ respectively.
\end{proof}

The asymptotic behavior for large $T$ as compared to $R$ now straightforward:

\begin{corollary}
Assume a bounded Lattice EDS is given, with the constants $\beta $ and $%
e_{0} $ as above. Then (\ref{flux1}), (\ref{flux2}) and (\ref{flux3}) hold.
\end{corollary}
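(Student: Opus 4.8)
The plan is to deduce all three asymptotic bounds directly from the non-asymptotic flux bounds (\ref{r:general1}), (\ref{r:general2}), (\ref{r:general3}) of the preceding Theorem, by fixing $R$ and letting $T\to\infty$. No new dynamical input is required; the entire content is a passage to the limit together with bookkeeping of the constants. The case $N=1$ is immediate: (\ref{r:general1}) gives $F(R,T)\leq 2\sqrt{\beta e_0 T}$ for every $T>0$, so dividing by $\sqrt T$ yields $T^{-1/2}F(R,T)\leq 2\sqrt{\beta e_0}$ uniformly in $T$, and (\ref{flux1}) follows on taking $\limsup_{T\to\infty}$.

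For $N=2$ I would first observe that, since $R$ is fixed, the hypothesis $64R^2e_0\leq\omega_2\beta T$ of (\ref{r:general2}) holds for all sufficiently large $T$. On this range, multiplying (\ref{r:general2}) by $(\log T)/T$ gives
\begin{equation*}
\frac{\log T}{T}F(R,T)\leq 12\omega_2\beta\cdot\frac{\log T}{\log\bigl(\omega_2\beta T/(64e_0R^2)\bigr)}.
\end{equation*}
Writing the denominator as $\log T+c_R$ with $c_R=\log\bigl(\omega_2\beta/(64e_0R^2)\bigr)$ independent of $T$, the quotient tends to $1$ as $T\to\infty$, so the limit superior is at most $12\omega_2\beta$. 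It then remains only to evaluate the constant: since $\omega_2=2^{3/2}\cdot2^2=8\sqrt2$, we have $12\omega_2=96\sqrt2$, which is exactly (\ref{flux2}).

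For $N\geq 3$ I would divide (\ref{r:general3}) by $T$, obtaining
\begin{equation*}
\frac{1}{T}F(R,T)\leq (N-1)\Bigl(1+\tfrac1R\Bigr)^{N-2}\omega_N R^{N-2}\beta+2^N\sqrt{\omega_N N}\,R^{N-1}\sqrt{e_0\beta}\;T^{-1/2}.
\end{equation*}
The second summand vanishes as $T\to\infty$, so the limit superior is bounded by the first. To recover the clean constant in (\ref{flux3}) I use $R\geq1$ to estimate $(1+1/R)^{N-2}\leq 2^{N-2}$, and then $(N-1)\omega_N 2^{N-2}=(N-1)N^{3/2}2^{2N-2}\leq N^{5/2}2^{2N-2}$, giving $\limsup_{T\to\infty}T^{-1}F(R,T)\leq N^{5/2}2^{2N-2}\beta R^{N-2}$.

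There is no genuine obstacle in this argument; it is a routine limit computation plus arithmetic on constants. The one point needing a moment's care is the $N=2$ case, where the logarithmic prefactor $\log T$ must be matched against the logarithm in the denominator of (\ref{r:general2}) so that their ratio converges to $1$ rather than to $0$ or $\infty$; this is precisely where the sharp logarithmic form supplied by Lemma \ref{l:ricatti2} is used. I should also note that for $N\geq 3$ the computation produces only the upper bound in (\ref{flux3}); the reverse inequality implicit in the stated equality would require exhibiting a system that saturates the flux, in line with the earlier remark that (\ref{r:boundall}) is sharp up to a constant.
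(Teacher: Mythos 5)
Your proof is correct and follows exactly the paper's route: the paper's own proof consists of substituting $\omega_N=N^{3/2}2^N$ into (\ref{r:general1})--(\ref{r:general3}) and using $(1+1/R)^{N-2}\leq 2^{N-2}$ for $N\geq 3$, which is precisely what you carry out (with the limit computations made explicit). Your closing remark that the argument yields only the upper bound in the stated equality (\ref{flux3}) is an accurate observation about the paper as well.
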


\begin{proof}
We insert into (\ref{r:general1}), (\ref{r:general2}) and (\ref{r:general3})
the definition (\ref{r:defomega}) of $\omega _{N}$, and in the case $N\geq 3$
also $\left( 1+1/R\right) ^{N-2}\leq 2^{N-2}$.
\end{proof}

We can now deduce the following dissipation bounds.

\begin{corollary}
\label{t:dissipation}Assume a bounded Lattice EDS is given, with the
constants $\beta $ and $e_{0}$ as above. Then for any integer $R\geq 1$

\begin{description}
\item[(i)] If $N=1$, then%
\begin{equation}
\lim \sup_{T\rightarrow \infty }\frac{1}{\sqrt{T}}\int_{0}^{T}\int_{C^{\ast
}(R)}d(\alpha ,t)d\alpha dt=O(\sqrt{\beta e_{0}}).  \label{r:done}
\end{equation}

\item[(ii)] If $N=2$, then%
\begin{equation}
\lim \sup_{T\rightarrow \infty }\frac{\log T}{T}\int_{0}^{T}\int_{C^{\ast
}(R)}d(\alpha ,t)d\alpha dt=O(\beta ).  \label{r:dtwo}
\end{equation}

\item[(iii)] If $N\geq 3$, then%
\begin{equation}
\lim \sup_{T\rightarrow \infty }\frac{1}{T}\int_{0}^{T}\int_{C^{\ast
}(R)}d(\alpha ,t)d\alpha dt=O(\beta R^{N-2}).  \label{r:dthree}
\end{equation}
\end{description}
\end{corollary}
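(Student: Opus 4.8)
The plan is to bootstrap the dissipation bounds directly from the flux bounds we have just proved, using the energy balance equation (\ref{r:balance2}) to convert a bound on $F(R,T)$ into a bound on $D(R,T)$. Recall that (\ref{r:balance2}) reads $F(R,T)=E(R,T)-E(R,0)+D(R,T)$, so rearranging gives
\begin{equation*}
D(R,T)=F(R,T)-E(R,T)+E(R,0)\leq F(R,T)+E(R,0),
\end{equation*}
where the inequality uses $E(R,T)\geq 0$ (which holds by (A1), since $e\geq 0$). Since $\int_{0}^{T}\int_{C^{\ast}(R)}d(\alpha,t)d\alpha\,dt$ is exactly $D(R,T)$ by the definition (\ref{r:totald}), it suffices to bound $F(R,T)+E(R,0)$ and then divide by the appropriate normalizing factor and take $\limsup_{T\to\infty}$.

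First I would note that $E(R,0)\leq 2^{N}R^{N}e_{0}$, exactly as computed in the previous theorem's proof; this is a constant independent of $T$. Therefore, in each of the three regimes, the $E(R,0)$ term contributes nothing to the $\limsup$: when divided by $\sqrt{T}$ (case $N=1$), by $T/\log T$ (case $N=2$), or by $T$ (case $N\geq 3$), the term $2^{N}R^{N}e_{0}$ tends to $0$ as $T\to\infty$. So the entire asymptotic content comes from the flux term $F(R,T)$, for which we apply the bounds (\ref{flux1}), (\ref{flux2}) and (\ref{flux3}) from the preceding Corollary. Concretely: in case $N=1$, $\limsup_{T\to\infty}\frac{1}{\sqrt{T}}D(R,T)\leq \limsup_{T\to\infty}\frac{1}{\sqrt{T}}F(R,T)\leq 2\sqrt{\beta e_{0}}=O(\sqrt{\beta e_{0}})$; in case $N=2$, the factor $\frac{\log T}{T}$ matched with (\ref{flux2}) gives $O(\beta)$; and in case $N\geq 3$, the factor $\frac{1}{T}$ matched with (\ref{flux3}) gives $N^{5/2}2^{2N-2}\beta R^{N-2}=O(\beta R^{N-2})$.

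The only subtlety worth spelling out is that the flux bounds as stated control $F(R,T)$, but the flux appearing in (\ref{r:balance2}) is a signed quantity (the net flux through the boundary), whereas $D(R,T)$ is manifestly nonnegative. The rearrangement above is valid regardless of sign because we are bounding $D$ from above, and dropping the nonnegative term $-E(R,T)$ only weakens the inequality in the favorable direction. Thus I expect no genuine obstacle here: the result is essentially a one-line corollary of the flux bounds combined with the energy balance, with the main care being to verify that the constant $E(R,0)$ term is asymptotically negligible against each of the three growth rates $\sqrt{T}$, $T/\log T$, $T$. The proof should read simply: insert (\ref{r:general1}), (\ref{r:general2}), (\ref{r:general3}) into $D(R,T)\leq F(R,T)+2^{N}R^{N}e_{0}$, normalize, and take the limit.
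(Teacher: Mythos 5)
Your proposal is correct and follows essentially the same route as the paper: the paper likewise uses the energy balance in the form $D(R,T)\leq F(R,T)+2^{N}R^{N}e_{0}$ (its inequality (\ref{r:balance3})), observes that the constant term is negligible under each normalization, and applies the asymptotic flux bounds (\ref{flux1}), (\ref{flux2}), (\ref{flux3}). Your remark about the sign of the net flux and the direction of the inequality is a fair point of care, but no additional argument is needed beyond what you and the paper both give.
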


\begin{remark}
The notation $F(x)\leq O(G(x))$ means that there exists a constant $C>0$
such that for all $x$, $F(x)\leq CG(x)$ (and not, as is more common, $%
|F(x)|\leq CG(x)$).
\end{remark}

\begin{proof}
Let $N=1$. The claim can be written as%
\begin{equation*}
\lim \sup_{T\rightarrow \infty }\frac{1}{\sqrt{T}}D(R,T)=O(\sqrt{\beta e_{0}}%
).
\end{equation*}%
By the energy balance (\ref{r:balance3}),%
\begin{equation*}
D(R,T)\leq F(R,T)+2^{N}R^{N}e_{0}.
\end{equation*}

Applying now (\ref{flux1}), we get%
\begin{equation*}
\lim \sup_{T\rightarrow \infty }\frac{1}{\sqrt{T}}D(R,T)\leq \lim
\sup_{T\rightarrow \infty }\frac{1}{\sqrt{T}}F(R,T)=O(\sqrt{\beta e_{0}})%
\text{.}
\end{equation*}

The cases $N=2$, $N\geq 3$ are analogous: we apply (\ref{flux2}), (\ref%
{flux3}) instead.
\end{proof}

These bounds on dissipation have important interpretations. In dimensions $%
1,2$, we see that for an average $t>0$, the total dissipation on any chosen
cube $C^{\ast }(R)$ at the time $t$ is arbitrarily close to $0$. We can
write it shortly%
\begin{equation}
\lim \sup_{T\rightarrow \infty }\frac{1}{T}D(R,T)=0\text{.}
\label{r:dissbound12}
\end{equation}%
We will see in the next section that it means that any semiorbit of a
bounded Lattice EDS stays for almost all times arbitrarily close to the set
of equilibria.

In dimensions $N\geq 3$, this conclusion does not necessarily hold. We see,
however, that the dynamics which can persist on the set of times of positive
density is concentrated on a set of two dimensions smaller than the
dimension of the physical lattice space. For example, in dimension $N=3$,
the non-stationary dynamics of dissipative lattice dynamics which can
persist is at most one-dimensional, thus essentially concentrated on a
finite set of lattice points.

\section{Convergence to equilibria in dimensions 1,2}

Now we deduce dynamical consequences of the bounds on flux and dissipation
deduced in the previous section. We focus for now to dimensions $N=1,2$. We
will assume that our Lattice EDS\ is bounded.

Also we assume that the Lattice EDS semiflow $(X,\varphi )$ is given on a
compact space $X$. This is not overly restricting: we have shown that, for
example in the case of the FK model, all orbits of bounded width are
relatively compact in the quotient space $X=Y/\sim $, where with $\sim $ we
identify all configurations which are integer translates. The topology here
is the induced product topology, i.e. the topology of pointwise convergence
on lattice points.

We can thus study the continuos semiflows $\varphi $ on compact metrizable
spaces $X$, with the structure of a bounded Lattice EDS. In this setting, we
are interested on the structure of $\omega $-limit sets of any $u\in X$,
i.e. the set of limit points of $u(t)$ as $t\rightarrow \infty $ in the
chosen topology on $X$.

We first show that the average time a flow spends outside of any
neighbourhood of stationary points is $0$. Denote by $\mathcal{E\subset }X$
the set of all equilibria (or stationary points) of the semiflow $\varphi $,
that means all points $u\in X$ such that for all $t\geq 0$, $u(t)=\varphi
(u,t)=u.$ If $\mathcal{U\subset X}$ is any set, then $1_{\mathcal{U}%
}:X\rightarrow \mathbb{R}$ is as usual its characteristic function, $1_{%
\mathcal{U}}(v)=1$ if $v\in \mathcal{U}$, otherwise $1_{\mathcal{U}}(v)=0$.

\begin{proposition}
\label{p:neighborhood}Assume $(X,\varphi )$ is a bounded Lattice EDS, assume 
$X$ is compact, and let $N=1$ or $2$. Let $\mathcal{U}$ be any open
neighbourhood of the set of stationary points $\mathcal{E}$. Then%
\begin{equation}
\lim_{T\rightarrow \infty }\frac{1}{T}\int_{0}^{T}1_{\mathcal{U}}(u(t))dt=1%
\text{.}  \label{r:timespent}
\end{equation}

In particular, $\mathcal{S}$ is non-empty.
\end{proposition}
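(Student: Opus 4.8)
The plan is to derive (\ref{r:timespent}) from the dissipation bound (\ref{r:dissbound12}) established in the previous section, by showing that if a semiorbit spends a positive density of time outside a neighbourhood $\mathcal{U}$ of $\mathcal{E}$, then it must accumulate a positive density of dissipation, contradicting the fact that $\frac{1}{T}D(R,T)\to 0$ for every fixed $R$. The key mechanism is axiom (A3): $d\equiv 0$ forces stationarity, so points far from $\mathcal{E}$ must carry strictly positive local dissipation somewhere on a bounded cube.

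First I would argue by contradiction: suppose (\ref{r:timespent}) fails, so there is an open $\mathcal{U}\supset\mathcal{E}$ and a set of times of positive upper density $\delta>0$ on which $u(t)\notin\mathcal{U}$. The next step is to quantify the dissipation carried by such points. Since $X$ is compact and $\mathcal{U}$ is open, the complement $K=X\setminus\mathcal{U}$ is compact and disjoint from $\mathcal{E}$. I want a uniform lower bound: there exist a radius $R$ and a constant $c>0$ such that $\int_{C^{\ast}(R)}d_v(\alpha)\,d\alpha\geq c$ for all $v\in K$. To get this, for each $v\in K$ axiom (A3) gives some lattice point $\alpha$ with $d_v(\alpha)>0$; by the pointwise continuity in (A2) this persists on a neighbourhood of $v$, and the total dissipation on a suitably large cube is a continuous (hence, on the compact $K$, bounded-below by a positive constant) function of $v$. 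A compactness/covering argument then produces the uniform $R$ and $c$.

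With this bound in hand, I would estimate the total dissipation from below: whenever $u(t)\in K$ we have $\int_{C^{\ast}(R)}d(\alpha,t)\,d\alpha\geq c$, so
\begin{equation*}
D(R,T)=\int_0^T\!\!\int_{C^{\ast}(R)}d(\alpha,t)\,d\alpha\,dt\geq c\int_0^T 1_K(u(t))\,dt,
\end{equation*}
and the right-hand side has upper density at least $c\delta>0$, giving $\limsup_{T\to\infty}\frac{1}{T}D(R,T)\geq c\delta>0$. This directly contradicts (\ref{r:dissbound12}), which holds for $N=1,2$, proving (\ref{r:timespent}). The final sentence (``$\mathcal{S}$ is non-empty'', evidently meaning $\mathcal{E}$ is non-empty) then follows, since (\ref{r:timespent}) would be vacuous or impossible if $\mathcal{E}=\emptyset$: with no stationary points one could take $\mathcal{U}=\emptyset$ (or cover $X$ with nonstationary points), and the uniform dissipation bound on all of the compact $X$ would force $D(R,T)$ to grow linearly in $T$, again contradicting (\ref{r:dissbound12}).

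The main obstacle I anticipate is establishing the \emph{uniform} lower bound $\int_{C^{\ast}(R)}d_v\,d\alpha\geq c$ over the compact set $K$, because (A2) only guarantees continuity of the dissipation at each individual lattice point, not continuity as a map $X\to l_\infty(\mathbb{Z}^N)$. Consequently the single radius $R$ must be extracted carefully: the point $\alpha$ with $d_v(\alpha)>0$ depends on $v$, so one must use compactness of $K$ to pass from pointwise positivity to a uniform cube radius and a uniform positive constant, rather than assuming any modulus of continuity uniform across the lattice. A secondary technical point is confirming that the set of times with $u(t)\notin\mathcal{U}$ is measurable (it is, by continuity of the semiflow and openness of $\mathcal{U}$), so that the density argument and the integral estimate are legitimate.
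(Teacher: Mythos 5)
Your proposal is correct and follows essentially the same route as the paper: compactness of $\mathcal{U}^{c}$, a pointwise-to-uniform lower bound $\int_{C^{\ast}(R)}d_{v}\,d\alpha\geq c$ obtained from (A3) and the pointwise continuity in (A2) via a finite subcover (taking the maximum of the radii and the minimum of the constants, which works because $d\geq 0$), and then comparison with the dissipation bound (\ref{r:dissbound12}). The paper phrases the final step directly rather than by contradiction, but the argument is the same.
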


\begin{proof}
As $X$ is compact, the complement $\mathcal{U}^{c}\,$is also compact. Choose 
$u\in $ $\mathcal{U}^{c}$. As $u$ is not stationary, by (A3) there exists $%
R(u)$ large enough and $\varepsilon (u)>0$ such that%
\begin{equation*}
\int_{C^{\ast }(R(u))}d_{u}(\alpha )d\alpha \geq 2\varepsilon (u)>0\text{.}
\end{equation*}

By (A2), that means continuity of $u\mapsto d_{u}(\alpha )$, there exists an
open neighborhood $\mathcal{V}(u)$ of $u\in $ $\mathcal{U}^{c}$ such that
for each $v\in \mathcal{V}(u)$,%
\begin{equation*}
\int_{C^{\ast }(R(u))}d_{v}(\alpha )d\alpha \geq \varepsilon (u).
\end{equation*}

The family $\mathcal{V}(u)$, $u\in $ $\mathcal{U}^{c}$ is an open cover of
the compact set $\mathcal{U}^{c}$, thus it has a finite subcover $\mathcal{V}%
(u_{1}),...,\mathcal{V}(u_{k})$. Let $\varepsilon =\min \{\varepsilon
(u_{1}),...,\varepsilon (u_{k})\}$, $R=\max \{R(u_{1}),...,R(u_{k})\}$. Then
for any $u\in \mathcal{U}^{c}$,%
\begin{equation*}
\int_{C^{\ast }(R)}d_{u}(\alpha )d\alpha \geq \varepsilon >0\text{.}
\end{equation*}

We now have%
\begin{eqnarray*}
\lim \sup_{T\rightarrow \infty }\frac{1}{T}\int_{0}^{T}1_{\mathcal{U}%
^{C}}(u(t))dt &\leq &\lim \sup_{T\rightarrow \infty }\frac{1}{T}\int_{0}^{T}%
\frac{1}{\varepsilon }1_{\mathcal{U}^{C}}(u(t))\int_{C^{\ast
}(R)}d_{u(t)}(\alpha )d\alpha dt \\
&\leq &\frac{1}{\varepsilon }\lim \sup_{T\rightarrow \infty }\frac{1}{T}%
D(R,T),
\end{eqnarray*}%
which is by (\ref{r:dissbound12}) equal to $0$.
\end{proof}

The LaSalle principle for gradient systems states that the $\omega $-limit
set of each point consists of stationary points. Our weaker version of this
for Lattice EDS is as follows.

\begin{proposition}
\label{p:contains}If $(X,\varphi )$ is a bounded Lattice EDS, assume $u(t)$
for some $u\in X$ is relatively compact, and let $N=1$ or $2$. Then $\omega
(u)$ contains a stationary point.
\end{proposition}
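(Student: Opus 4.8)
The plan is to argue by contradiction, mirroring the structure of the proof of Proposition~\ref{p:neighborhood} but localizing the dissipation estimate to the (compact) $\omega$-limit set rather than to all of $X$. Suppose $\omega(u)$ contains no stationary point. Since the forward orbit $u(t)$ is relatively compact in the metrizable space $X$, the set $\omega(u)$ is non-empty, compact, invariant, and the trajectory approaches it: for every open neighbourhood $\mathcal{W}$ of $\omega(u)$ there is a $T_{0}\geq 0$ with $u(t)\in \mathcal{W}$ for all $t\geq T_{0}$. This first step is the only place where relative compactness (as opposed to compactness of $X$) is genuinely used, and it is the main thing to get right, since it is what lets the finite-subcover argument be applied to $\omega(u)$ in place of the set $\mathcal{U}^{c}$ from Proposition~\ref{p:neighborhood}.

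Next I would extract a uniform lower bound on the local dissipation near $\omega(u)$. For each $v\in \omega(u)$, the point $v$ is not stationary, so by (A3) we have $d_{v}\not\equiv 0$; hence there exist $R(v)\geq 1$ and $\varepsilon(v)>0$ with $\int_{C^{\ast}(R(v))}d_{v}(\alpha)\,d\alpha \geq 2\varepsilon(v)$. By (A2) the map $w\mapsto \int_{C^{\ast}(R(v))}d_{w}(\alpha)\,d\alpha$ is continuous, being a finite sum of continuous functions, so there is an open neighbourhood $\mathcal{V}(v)$ of $v$ on which this integral is at least $\varepsilon(v)$. The family $\{\mathcal{V}(v)\}_{v\in \omega(u)}$ covers the compact set $\omega(u)$, so finitely many of them, say $\mathcal{V}(v_{1}),\dots,\mathcal{V}(v_{k})$, already cover it. Setting $\mathcal{W}=\bigcup_{i}\mathcal{V}(v_{i})$, $R=\max_{i}R(v_{i})$, and $\varepsilon=\min_{i}\varepsilon(v_{i})>0$, I obtain $\int_{C^{\ast}(R)}d_{w}(\alpha)\,d\alpha \geq \varepsilon$ for every $w\in \mathcal{W}$, using $d\geq 0$ (axiom (A1)) together with $C^{\ast}(R(v_{i}))\subseteq C^{\ast}(R)$.

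Finally I would derive the contradiction. Applying the first step to this particular $\mathcal{W}$, choose $T_{0}$ so that $u(t)\in \mathcal{W}$ for all $t\geq T_{0}$. Then for such $t$ the integrated dissipation on $C^{\ast}(R)$ at time $t$ is at least $\varepsilon$, so for $T\geq T_{0}$,
\[
D(R,T)=\int_{0}^{T}\int_{C^{\ast}(R)}d(\alpha,t)\,d\alpha\,dt\geq \varepsilon(T-T_{0}),
\]
whence $\liminf_{T\rightarrow \infty}\tfrac{1}{T}D(R,T)\geq \varepsilon>0$. This contradicts the dimension $N=1,2$ dissipation bound (\ref{r:dissbound12}), which asserts $\limsup_{T\rightarrow \infty}\tfrac{1}{T}D(R,T)=0$. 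Therefore $\omega(u)$ must contain a stationary point. Everything after the first step is a direct transcription of the averaged-dissipation computation already used in Proposition~\ref{p:neighborhood}, so the essential work is confined to justifying the compactness and attraction properties of $\omega(u)$ under the weaker hypothesis of relative compactness of the single orbit.
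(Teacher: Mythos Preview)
Your proof is correct, but it takes a different route from the paper's. The paper's argument is shorter: it replaces $X$ by the compact closure of the orbit $\{u(t):t\geq 0\}$ (on which the restricted semiflow is still a bounded Lattice EDS), invokes Proposition~\ref{p:neighborhood} directly to conclude that $u(t)$ visits every neighbourhood of $\mathcal{E}$ for a set of times of full density, hence infinitely often, and then uses compactness to extract a limit point in $\mathcal{E}$. Your approach instead works directly with the compact set $\omega(u)$ in place of $\mathcal{U}^{c}$, re-runs the finite-subcover argument there, and exploits the attraction property of $\omega(u)$ rather than the time-average statement. Both ultimately rest on the same dissipation bound (\ref{r:dissbound12}); the paper's version is more economical because it reuses Proposition~\ref{p:neighborhood} as a black box, while yours is more self-contained and makes the role of $\omega(u)$ explicit.
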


\begin{proof}
We assume without loss of generality that $X$ is equal to the compact
closure of $u(t)$, thus compact. By Proposition \ref{p:neighborhood}, $u(t)$
visits any neighborhood of $\mathcal{E}$ infinitely many times. By
compactness, $u(t)$ than has a limit point in $\mathcal{E}$.
\end{proof}

We recall the notion of uniform recurrence, as the strongest notion of
recurrence which occurs in general dynamical systems. A point $x\in X$ is
uniformly recurrent with respect to a continuous semiflow $\varphi $ on a
metrizable space $X$, if for each open neighbourhood $U$ of $x$, the set of
return times $\tau =\{t\geq 0$\ $|\;\varphi (t,x)\in U\}$ satisfies the
following:

\begin{description}
\item[R] There exists $n\geq 1$ and $t_{1},...,t_{n}$ so that $[0,+\infty
)=\cup _{k=1}^{n}\{t\geq 0,t+t_{k}\in U\}$.
\end{description}

It is easy to show that Proposition \ref{p:neighborhood} implies that the
set of uniformly recurrent points of a bounded Lattice EDS coincides with
the set of equilibria $\mathcal{E}$ (see e.g. Lemma 5.6 in \cite{Gallay:01}%
). We also recall the relationship between minimal sets and uniformly
recurrent points (see Furstenberg \cite{Furstenberg:81}, Lemma 1.14 and
Theorem 1.15):

\begin{theorem}
\label{t:furstenberg}Given a continuous semiflow on a compact metrizable
space, every minimal closed positively invariant set consists of uniformly
recurrent points.
\end{theorem}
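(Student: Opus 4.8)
The plan is to recognize this as the classical characterization of minimal sets in topological dynamics (as in Furstenberg \cite{Furstenberg:81}) and to reproduce its short compactness argument, taking care that we work with a one-sided semiflow rather than a flow. Let $M\subset X$ be a minimal closed positively invariant set. Since $M$ is a closed subset of the compact metrizable space $X$ and is positively invariant, the restriction $\varphi|_{M}$ is again a continuous semiflow on a compact metrizable space, and $M$ is minimal for it. Moreover a point $x\in M$ is uniformly recurrent for $\varphi$ if and only if it is so for $\varphi|_{M}$: because $M$ is invariant, the forward orbit of $x$ never leaves $M$, so the return-time set to a neighbourhood $U$ coincides with that to $U\cap M$, and neighbourhoods in $M$ are exactly the traces $U\cap M$. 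Thus I may assume without loss of generality that $X$ itself is minimal and show that every $x\in X$ is uniformly recurrent.

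The first step is to extract from minimality that every forward orbit is dense: for any $y\in X$ the forward orbit closure $\overline{\{\varphi(t,y):t\ge 0\}}$ is a closed positively invariant subset of $X$, hence equals $X$. In particular, fixing $x\in X$ and an open neighbourhood $U$ of $x$, every $y\in X$ has $x\in\overline{\{\varphi(t,y):t\ge 0\}}$, so there is a time $t_{y}\ge 0$ with $\varphi(t_{y},y)\in U$.

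The crucial step is to upgrade this pointwise entry into a \emph{uniform} one via compactness. Since the time-$t_{y}$ map $z\mapsto\varphi(t_{y},z)$ is continuous and sends $y$ into the open set $U$, there is an open neighbourhood $W_{y}$ of $y$ with $\varphi(t_{y},W_{y})\subset U$. The family $\{W_{y}\}_{y\in X}$ covers the compact space $X$, so finitely many $W_{y_{1}},\dots,W_{y_{n}}$ suffice; write $t_{1},\dots,t_{n}$ for the corresponding entry times. Then for every $z\in X$ there is some $k$ with $z\in W_{y_{k}}$, whence $\varphi(t_{k},z)\in U$. Applying this with $z=\varphi(s,x)$ for an arbitrary $s\ge 0$ and using the semigroup identity $\varphi(t_{k},\varphi(s,x))=\varphi(s+t_{k},x)$, we find some $k$ with $\varphi(s+t_{k},x)\in U$, i.e. $s+t_{k}\in\tau$. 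This is exactly property (R), so $\tau$ is syndetic and $x$ is uniformly recurrent.

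I expect the only genuine subtlety to lie in the semiflow (one-sided time) setting: the density argument and the entry argument must be phrased purely in terms of forward orbits, and it is the semigroup identity $\varphi(t_{k},\varphi(s,x))=\varphi(s+t_{k},x)$ that lets the translation trick go through without inverting time. The compactness step is the heart of the matter, producing a single finite set $\{t_{1},\dots,t_{n}\}$ of entry times that works simultaneously for every starting point; this is precisely what converts the statement ``each orbit eventually meets $U$'' into the bounded-gap conclusion (R). Everything else is a routine application of continuity and the definition of minimality.
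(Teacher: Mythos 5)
Your argument is correct: the reduction to a minimal ambient space, the density of every forward orbit closure, and the compactness step producing a single finite set of entry times $t_1,\dots,t_n$ together give exactly property (R), and you have correctly phrased everything in terms of forward orbits so that only the semigroup identity (and never time inversion) is used. The paper itself offers no proof of this statement --- it simply cites Furstenberg (Lemma 1.14 and Theorem 1.15 of \cite{Furstenberg:81}) --- and your write-up is the standard compactness argument from that source, properly adapted to the one-sided semiflow setting, so there is nothing to object to.
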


We can now summarize the key dynamical systems properties of bounded Lattice
EDS\ on compact sets in dimensions 1,2.

\begin{theorem}
\label{t:main}Assume $(X,\varphi )$ is a $1$ or $2$-dimensional bounded
Lattice EDS, and assume $X$ is compact. Then

\begin{description}
\item[(i)] $(X,\varphi )$ has no periodic orbits of period $T>0$;

\item[(ii)] The $\omega $-limit set of each $u\in X$ contains an equilibrium;

\item[(iii)] The only uniformly recurrent points are equilibria;

\item[(iv)] The only minimal, closed positively invariant sets consist of
single equilibria;

\item[(v)] All invariant (Borel probability)\ measures are supported on the
set of equilibria.
\end{description}
\end{theorem}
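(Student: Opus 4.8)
The plan is to prove the five statements by drawing on the results already established, treating them in a logical order that lets later parts build on earlier ones. The heavy analytic lifting has all been done in the flux and dissipation bounds; what remains is to translate these into the dynamical-systems language. I would organize the proof so that (i) follows immediately, (ii) restates an earlier proposition, (iii) records an observation already made in the text, and then (iv) and (v) are deduced from (iii) with the help of Furstenberg's theorem and compactness.

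First I would dispatch (i) by citing Corollary~\ref{c:periodic}, which already asserts that a Lattice EDS in dimensions $N=1$ or $2$ has no periodic orbits. Likewise (ii) is precisely the content of Proposition~\ref{p:contains}, whose hypotheses are satisfied here since $X$ is compact and hence every orbit is relatively compact. For (iii), the text has already noted that Proposition~\ref{p:neighborhood} implies the set of uniformly recurrent points coincides with the set of equilibria $\mathcal{E}$; I would simply invoke that remark (and the reference to Lemma~5.6 in \cite{Gallay:01}). The core argument is: if $x$ were uniformly recurrent but not stationary, property~(A3) gives positive dissipation on some cube, and the recurrence condition~R forces $x(t)$ to spend a positive density of times near $x$, contradicting the vanishing time-average~(\ref{r:dissbound12}) established in Proposition~\ref{p:neighborhood}.

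With (iii) in hand, I would deduce (iv) as follows. Let $M \subset X$ be a minimal, closed, positively invariant set. Since $X$ is compact metrizable and $\varphi$ is a continuous semiflow, Theorem~\ref{t:furstenberg} tells us that $M$ consists entirely of uniformly recurrent points. By (iii) every such point is an equilibrium, so $M \subset \mathcal{E}$. But any single equilibrium $\{u\}$ is itself closed and positively invariant (since $\varphi(u,t)=u$), so minimality of $M$ forces $M$ to be a single point. Thus every minimal closed positively invariant set is a singleton equilibrium.

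Finally, for (v) I would pass to invariant Borel probability measures via the ergodic decomposition and the support structure. Let $\nu$ be any $\varphi$-invariant Borel probability measure on the compact space $X$. Applying Proposition~\ref{p:neighborhood} pointwise together with the dominated convergence theorem (the integrands $1_{\mathcal{U}}$ are bounded), one sees that for any open neighbourhood $\mathcal{U}$ of $\mathcal{E}$ the time-average converges to $1$ almost everywhere, and integrating against the invariant measure $\nu$ gives $\nu(\mathcal{U})=1$; intersecting over a countable neighbourhood base of the closed set $\mathcal{E}$ yields $\nu(\mathcal{E})=1$. The main obstacle, and the step requiring the most care, is this last deduction: one must interchange the time-average limit with integration against $\nu$ and confirm that the uniform convergence in Proposition~\ref{p:neighborhood} (which is stated for each fixed orbit) can be integrated, which the boundedness of $1_{\mathcal{U}}$ together with invariance of $\nu$ permits. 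The remaining parts are essentially bookkeeping on top of the earlier machinery.
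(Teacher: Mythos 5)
Your proposal is correct and follows essentially the same route as the paper: (i) and (ii) are exactly Corollary~\ref{c:periodic} and Proposition~\ref{p:contains}, (iii) follows from (\ref{r:timespent}) via the cited remark, and (iv) from (iii) together with Theorem~\ref{t:furstenberg}. The only cosmetic difference is in (v), where the paper invokes the Birkhoff ergodic theorem while you interchange the time average with integration against $\nu$ via dominated convergence and invariance; both deductions rest on the same relation (\ref{r:timespent}).
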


\begin{proof}
We proved (i) in Corollary \ref{c:periodic}, and (ii)\ in Proposition \ref%
{p:contains}. The definition of uniform recurrence and (\ref{r:timespent})
imply (iii);\ (iv)\ follows from (iii) by Theorem \ref{t:furstenberg}.
Relation (\ref{r:timespent}) and the Birkhoff ergodic theorem imply (v).
\end{proof}

\textbf{Discussion of dimensions }$\mathbf{N\geq 3}$. A natural question to
ask is whether some claims of Theorem \ref{t:furstenberg} can be extended to
dimensions $N\geq 3$. We have already noted in Remark \ref{r:finite} that
the constraints to the flux transfer in dimensions $N\geq 3$ are
qualitatively different, as the sum $\sum_{r\in \mathbb{N}}1/r^{N-1}$ is
finite.

Indeed, in dimensions $N\geq 3$ one can construct energy increasing orbits
and periodic orbits, thus contradicting all the claims of Theorem \ref%
{t:main}. The examples in the continuous space case have been constructed in 
\cite{Gallay:01}, and can be adapted to the discrete space case.

We believe, however, that the asymptotic bound on dissipation $D(R,T)\sim
TR^{N-2}$ as $T\gg R$ in dimensions $N\geq 3$ can imply interesting
conclusions for particular equations. We suggest in Section \ref{s:app1} an
application to coarsening dynamics in bistable potential.

\textbf{Structure of the }$\omega $\textbf{-limit set in dimensions }$1,2$%
\textbf{. }Another important question is whether one can describe in some
detail the structure of $\omega $-limits set of compact, bounded Lattice
EDS\ in low dimensions. One can show that even in dimension $1$, for
compact, bounded Lattice EDS, the $\omega $-limit set can contain
non-stationary points, so Lattice EDS indeed differ from the gradient
systems and lattices on finite domains. As in the continuous case (\cite%
{Gallay:12}), our example is inspired by the coarsening dynamics of the real
Ginzburg-Landau equation studied by Eckmann and Rougemont, and is outlined
in section \ref{s:app1}.

\section{Bounds on relaxation times \label{s:relaxationtimes}}

We can use the flux bounds (\ref{r:general1}), (\ref{r:general2}) and (\ref%
{r:general3}) to calculate upper bounds for an arbitrary orbit to relax $%
\varepsilon $-close to the set of equilibria $\mathcal{E}$ on a cube of size 
$r$. Given a bounded Lattice EDS, let $\mathcal{U}_{\varepsilon ,r}\,\ $be
the set of all $u\in X$ such that for all $\alpha \in C^{\ast }(r)$, $%
d(\alpha )<\varepsilon $. Then by (A3),%
\begin{equation*}
\mathcal{E}=\bigcap_{\varepsilon >0\text{, }r\geq 1}\mathcal{U}_{\varepsilon
,r}\text{.}
\end{equation*}

Let $t_{\varepsilon ,r\,}$be the first time such that $u(t)\in \mathcal{U}%
_{\varepsilon ,r}$. Let $\beta $, $e_{0}=||e_{u(0)}||_{\infty }$ be as in
the Section \ref{s:app1}. We will show that:

\begin{description}
\item[(i)] For $N=1$, $t_{\varepsilon ,r\,}\ll e_{0}\beta /(\varepsilon
^{2}r^{2})$,

\item[(ii)] For $N=2$, $t_{\varepsilon ,r\,}\ll \exp (c_{2}\beta
/(\varepsilon r^{2})),$

\item[(iii)] For $N\geq 3$, 
\begin{equation*}
t_{\varepsilon ,r\,}\ll \frac{e_{0}\beta }{r^{2}(c_{N}\beta
/r^{2}-\varepsilon )^{2}},
\end{equation*}%
whenever the denominator is $>0$.
\end{description}

Here $c_{N}$ are constants depending only on the dimension $N$, and are
evaluated below.

If $u(t)\not\in \mathcal{U}_{\varepsilon ,r}$ on $[0,T]$, the energy balance
equality, $e\geq 0$ and $|C^{\ast }(r)|=2^{N}r^{N}$ imply, after integrating
over $[0,T]$:%
\begin{equation}
F(r,T)+e_{0}2^{N}r^{N}\geq D(r,T)\geq \varepsilon 2^{N}r^{N}T\text{.}
\label{r:inequality}
\end{equation}

\textbf{The case }$N=1$\textbf{.} It is easy to check that, if $AT-B\sqrt{T}%
-C\geq 0$ for some constants $A,B,C>0$, then 
\begin{equation}
\sqrt{T}<B/A+\sqrt{C/A}.  \label{r:quadratic}
\end{equation}%
We combine that with (\ref{r:general1}) and (\ref{r:inequality}) and get%
\begin{equation*}
t_{\varepsilon ,r}<\frac{e_{0}\beta }{\varepsilon ^{2}r^{2}}+O\left( \frac{%
\sqrt{\beta }}{\varepsilon ^{3/2}r}\right) \text{.}
\end{equation*}

\textbf{The case }$N=2$\textbf{. }Again we first consider a general
inequality $AT/(\log T-B)+C\geq DT$, for some constants $A,B,C,D>0$, which
implies%
\begin{equation*}
T\log T<\left( \frac{A}{D}+B\right) T+\frac{C}{D}\log T\text{,}
\end{equation*}%
thus%
\begin{equation*}
T<\max \left\{ \exp \left( \frac{A}{2D}+\frac{B}{2}\right) ,\frac{C}{2D}%
\right\} .
\end{equation*}%
From (\ref{r:general2}) and (\ref{r:inequality}) in the case $B=\log
(64e_{0}r^{2}/(\omega _{2}\beta ))>0\Longleftrightarrow 64e_{0}r^{2}>\omega
_{2}\beta $, we have%
\begin{equation*}
t_{\varepsilon ,r}<\max \left\{ \frac{32e_{0}r^{2}}{\omega _{2}\beta }\exp
\left( \frac{2\omega _{2}\beta }{\varepsilon r^{2}}\right) ,\frac{e_{0}}{%
2\varepsilon }\right\} .
\end{equation*}%
(the case $B<0$ results with only a small order correction, and can be
evaluated easily). We see that in dimension 2, we have two relaxation
timescales:\ the fast relaxation for $\varepsilon $ of similar magnitude as
the locally available energy $e_{0}$, and potentially exponentially long
relaxation time for $\varepsilon \ll e_{0}$.

\textbf{The case }$N\geq 3\mathbf{.}$ Similarly as above, we use (\ref%
{r:general3}), (\ref{r:inequality}) and (\ref{r:quadratic}) and get%
\begin{equation*}
t_{\varepsilon ,r}<\frac{N\omega _{N}e_{0}\beta }{r^{2}\left( N\omega
_{N}\beta /(4r^{2})-\varepsilon \right) ^{2}}+O\left( \frac{e_{0}\sqrt{\beta 
}}{r\left( N\omega _{N}\beta /(4r^{2})-\varepsilon \right) ^{3/2}}\right)
\end{equation*}%
if $\varepsilon >N\omega _{N}\beta /(4r^{2})$. If $\varepsilon \leq N\omega
_{N}\beta /(4r^{2})$, the orbit does not necessarily ever enter $\mathcal{U}%
_{\varepsilon ,r}$.

\section{Application example:\ Coarsening in bistable potential \label%
{s:app1}}

\subsection{Introduction and setting.}

Eckmann and Rougemont studied in \cite{Eckmann98} the real Ginzburg-Landau
equation%
\begin{equation}
u_{t}=u_{xx}+u-u^{3}  \label{r:rgle}
\end{equation}%
(a continuous analogue of an elastic 1d lattice in a polynomial potential),
also as an extended system, i.e. considering time evolution of
configurations $u:\mathbb{R}$ $\rightarrow \lbrack -1,1]$ not necessarily
vanishing at infinity. They were interested in dynamics of multikink
solutions: evolution of initial configurations $u(0)$ which "jump" between
minima $\pm 1$ infinitely many times. They then have shown that for selected
initial conditions, for each $x\in \mathbb{R}$, $u(x)$ jumps infinitely many
times between $\pm 1$. They also argued that the resulting dynamics of
lengths of intervals for which a configuration $u$ lies in the same minimum
approximates well the abstract Bray and Derrida coarsening dynamics model 
\cite{Bray95}. Rougemont in \cite{Rougemont00} studied behavior and
disappearance of selected initial "droplet" conditions for the analogue of (%
\ref{r:rgle}) in dimension $2$.

Here we develop a lattice equivalent of this model and behavior, in somewhat
more general setting. In particular, we will also discuss second-order
damped dynamics in dimensions $1,2$.

Assume $L:\mathbb{R}\times \mathbb{R}\rightarrow \mathbb{R}$ is a smooth
function (at least $C^{2}$)\ satisfying (L1), (L2) from Section \ref%
{s:examples}, and in addition the following (satisfied e.g. by the standard
1d FK\ model):

\begin{description}
\item[(L3)] The pair $(0,0)$ is the strict global minimum of $L$;

\item[(L4)] The partial derivative $L_{12}\geq 0$;

\item[(L5)] $L(x,y)=L(-x,-y).$
\end{description}

We will consider the dynamics on the $N$-dimensional, scalar valued lattice,
with the formal energy associated to $u:\mathbb{Z}^{N}\rightarrow \mathbb{R}$%
:%
\begin{equation*}
E(u)=\sum_{\alpha \in \mathbb{Z}^{N}}\sum_{j=1}^{N}L(u(\alpha ),u(\alpha
+\varepsilon _{j}))\text{.}
\end{equation*}

We thus discuss the dynamics%
\begin{equation}
\lambda \cdot \partial _{tt}u(\alpha )+\partial _{t}u(\alpha
)=-\sum_{j=1}^{N}(L_{2}(u(\alpha -\varepsilon _{j}),u(\alpha
)+L_{1}(u(\alpha ),u(\alpha +\varepsilon _{j})),  \label{r:overdamped}
\end{equation}%
where $\lambda \geq 0$. We will distinguish below the case $\lambda =0$
(gradient)\ and $\lambda >0$ (damped). Both cases belong to a class of
generalized FK models, discussed in Section \ref{s:examples}, with the
standard 1d FK\ model being a special case. In the damped case, we will
further require that $\lambda $ is not large enough, so that the partial
order introduced by Baesens and MacKay \cite{Baesens04}, \cite{Baesens05}
holds.\ Let%
\begin{equation*}
B=\max_{0\leq x,y,z\leq 1}(L_{22}(x,y)+L_{11}(y,z))\text{.}
\end{equation*}

We will assume the following (trivially holding for $\lambda =0$) \textit{%
overdamped} condition:

\begin{description}
\item[(L6)] Assume $4\lambda B\leq 1$.
\end{description}

\subsection{Dynamics of (\protect\ref{r:ordering2}) as a bounded Lattice EDS.%
}

We will be interested in solutions $0\leq u\leq 1$. Because of periodicity
of $L$ and (L3), $w_{o},w_{1}\in X_{\lambda }$ as defined below are stable
equilibria:%
\begin{eqnarray*}
\lambda &=&0:\qquad w_{0}\equiv 0,w_{1}\equiv 0, \\
\lambda &>&0:\qquad w_{0}\equiv (0,0),w_{1}\equiv (1,0),
\end{eqnarray*}%
where in the case $\lambda >0$ we consider as usually at a single lattice
point the evolution of $w(\alpha )=(u(\alpha ),\partial _{t}u(\alpha ))$.
Analogously to Eckmann and Rougemont, we will be interested in solutions of (%
\ref{r:overdamped}) "jumping" at each lattice point infinitely times between 
$w_{0}$ and $w_{1}$. Thus the following choice of the state space is
appropriate: in the case $\lambda =0$, we will assume that for all $\alpha
\in Z^{N}$, 
\begin{equation}
0\leq u(\alpha )\leq 1\text{,}  \label{r:ordering1}
\end{equation}%
and in the case $\lambda >0$, we will in addition require that the
derivative $v(\alpha )=\partial _{t}u(\alpha )$ is not too large:%
\begin{equation}
0\leq 2\lambda \cdot v(\alpha )+u(\alpha )\leq 1.  \label{r:ordering2}
\end{equation}

The state spaces will thus be as follows:

\begin{itemize}
\item $\lambda =0$: $X_{0}=\{u:\mathbb{Z}^{N}\rightarrow \mathbb{R}$
satisfies (\ref{r:ordering1})$\},$

\item $\lambda >0$: $X_{\lambda }=\{(u,v):\mathbb{Z}^{N}\times \mathbb{Z}%
^{N}\rightarrow \mathbb{R}$ satisfies (\ref{r:ordering1}),(\ref{r:ordering2})%
$\}$.
\end{itemize}

Clearly by Tychonoff theorem, for all $\lambda \geq 0$, $X_{\lambda }$ is a
compact set in the induced product topology (i.e. topology of convergence at
each lattice point). By combining results from Section \ref{s:examples} and
the order-preserving property of the solution of (\ref{r:overdamped}), we
see that the theory developed in this paper applies to (\ref{r:overdamped}).

\begin{proposition}
\label{l:boundedEDS}Assume $\lambda \geq 0$ and $L$ is $C^{2}$ satisfying
(L1)-(L6). Then the solution of (\ref{r:overdamped}) generates a bounded
Lattice EDS on $X_{\lambda }$, which is compact in the induced product
topology.
\end{proposition}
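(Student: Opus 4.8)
The plan is to combine four ingredients: compactness of $X_\lambda$, existence and invariance of the semiflow, verification of the axioms (A1)--(A5), and the uniform energy bound. Compactness is immediate. For $\lambda=0$ the constraint (\ref{r:ordering1}) gives $X_0=[0,1]^{\mathbb{Z}^N}$, and for $\lambda>0$ the two constraints (\ref{r:ordering1}),(\ref{r:ordering2}) confine each pair $(u(\alpha),v(\alpha))$ to a bounded closed region of $\mathbb{R}^2$; either way $X_\lambda$ is a closed subset of a product of compact sets, hence compact in the product topology by Tychonoff.

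The heart of the argument is showing that (\ref{r:overdamped}) generates a semiflow leaving $X_\lambda$ invariant. Local existence, uniqueness and continuity in the product topology follow from standard ODE theory, since $L\in C^2$ makes the right-hand side locally Lipschitz; global existence then follows once invariance is established, as the orbit cannot escape the bounded set $X_\lambda$ in finite time. For invariance I would exploit the order-preserving structure. When $\lambda=0$, the twist condition (L4), $L_{12}\geq 0$, renders (\ref{r:overdamped}) monotone, so a comparison argument against the constant equilibria $w_0\equiv 0$ and $w_1\equiv 1$ (stationary by (L1),(L3), since $L_1,L_2$ vanish at $(0,0)$ and hence at $(1,1)$ by periodicity) traps every orbit issuing from $[0,1]^{\mathbb{Z}^N}$ between them. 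When $\lambda>0$ the naive pointwise order is no longer preserved; instead I would invoke the Baesens--MacKay partial order of \cite{Baesens04},\cite{Baesens05}, built precisely from the combination $2\lambda v+u$ appearing in (\ref{r:ordering2}), and for which the overdamped hypothesis (L6), $4\lambda B\leq 1$, is exactly the condition guaranteeing preservation under the second-order flow. Since $X_\lambda$ is the order interval bounded below and above by the stationary points $w_0\equiv(0,0)$ and $w_1\equiv(1,0)$, invariance follows. This monotone-dynamics step is the main obstacle, as it rests on the comparison machinery of \cite{Baesens04},\cite{Baesens05} rather than on a direct estimate.

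To install the Lattice EDS structure I note that (\ref{r:overdamped}) is a damped nearest-neighbour generalized Frenkel--Kontorova model of the kind treated in Section \ref{s:examples}, so, after normalizing $L\geq 0$ (legitimate by (L1),(L2)), I would set
\begin{equation*}
e(\alpha)=\frac{\lambda}{2}(\partial_t u(\alpha))^2+\sum_{j=1}^N L(u(\alpha-\varepsilon_j),u(\alpha)),\qquad d(\alpha)=(\partial_t u(\alpha))^2,
\end{equation*}
with flux $f_j(\alpha)=-L_1(u(\alpha),u(\alpha+\varepsilon_j))\partial_t u(\alpha)$. Then (A1)--(A3) are immediate, and the energy balance (A4) follows by the same partial-integration computation as in Section \ref{s:examples}: differentiating $e$, substituting (\ref{r:overdamped}), and observing that the $L_2$ terms cancel while the remaining $L_1$ terms reassemble as the discrete divergence of $f$. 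For (A5) I apply Lemma \ref{l:defb} to $L$ to obtain a non-decreasing $b$ with $L_1^2\leq b(|L|)$; using $L(u(\alpha),u(\alpha+\varepsilon_j))\leq e(\alpha+\varepsilon_j)\leq ||e||_\infty$ then yields $f^2\leq d\cdot N\,b(||e||_\infty)$, which is (A5) with the non-decreasing function $N\,b$.

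Finally, for boundedness, on $X_\lambda$ each $u(\alpha)\in[0,1]$ and, when $\lambda>0$, $v(\alpha)$ lies in the bounded interval forced by (\ref{r:ordering2}); since $L$ is continuous and hence bounded on $[0,1]^2$, every term of $e(\alpha)$ is uniformly bounded, so $\sup_{u\in X_\lambda}||e_u||_\infty<\infty$. This establishes that $(X_\lambda,\varphi)$ is a bounded Lattice EDS on a compact space, as claimed.
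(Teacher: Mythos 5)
Your proposal is correct and follows essentially the same route as the paper: compactness by Tychonoff, invariance of $X_\lambda$ via the cooperative structure from (L4) and the Baesens--MacKay order-preservation results (with (L6) in the damped case), the Lattice EDS structure taken from the generalized Frenkel--Kontorova computations of Section \ref{s:examples} together with Lemma \ref{l:defb} for (A5), and boundedness of $e$ from the confinement $0\leq u\leq 1$. The only difference is that you spell out the (A2)--(A5) verification and the equilibrium property of $w_0,w_1$ in more detail than the paper, which simply cites Section \ref{s:examples}; the substance is identical.
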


\begin{proof}
We first show that $X_{\lambda }$ is invariant. Note that the condition (L4)
implies that the lattice with the dynamics (\ref{r:overdamped}) is
cooperative (see relation (2) in \cite{Baesens05}). In the case $\lambda =0$%
, the ordering on $\mathbb{R}^{\mathbb{Z}^{N}}$ is defined with $u\leq
u^{\ast }$, if for all $\alpha \in \mathbb{Z}^{N}$, 
\begin{equation*}
u(\alpha )\leq u^{\ast }(\alpha )\text{.}
\end{equation*}%
By \cite{Baesens05}, Proposition 2.1, the dynamics (\ref{r:overdamped})
preserves ordering, thus $w_{0}\leq u(0)\leq w_{1}$ implies that for all $%
t\geq 0$, $w_{0}\leq u(t)\leq w_{1}$, so $X_{0}$ is invariant.

Let $\lambda >0$. The partial order on $(\mathbb{R\times R)}^{\mathbb{Z}%
^{N}} $introduced in \cite{Baesens04} is now $(u,v)\leq (u^{\ast },v^{\ast
}) $, if for all $\alpha \in \mathbb{Z}^{N},$%
\begin{eqnarray*}
u(\alpha ) &\leq &u^{\ast }(\alpha ), \\
2\lambda v(\alpha )+u(\alpha ) &\leq &2\lambda v^{\ast }(\alpha )+u^{\ast
}(\alpha )\text{.}
\end{eqnarray*}

We set as usual $v(\alpha )=\partial _{t}u(\alpha )$. By \cite{Baesens05},
Proposition 4.2 and (L6), the equation (\ref{r:overdamped}) preserves the
partial order above. The conditions (\ref{r:ordering1}), (\ref{r:ordering2})
can be read as $w_{0}\leq (u(0),\partial _{t}u(\alpha ))\leq w_{1}$, thus by
the order preserving property, for all $t\geq 0$, $w_{0}\leq (u(t),\partial
_{t}u(t))\leq w_{1}$. We conclude that $X_{\lambda }$ is invariant.

We have shown in Section \ref{s:examples} that (\ref{r:overdamped})
generates a Lattice EDS. By compactness and (\ref{r:ordering1}), the energy%
\begin{equation*}
e_{u}(\alpha )=\sum_{j=1}^{N}L(u(\alpha ),u(\alpha +\varepsilon _{j}))
\end{equation*}%
is bounded, thus we have a bounded Lattice EDS.
\end{proof}

\subsection{The set of equilibria.}

The set $\mathcal{E}$ of equilibria of (\ref{r:overdamped}) corresponds in
the case $\lambda =0$ to the solutions of 
\begin{equation*}
\sum_{j=1}^{N}(L_{2}(u(\alpha -\varepsilon _{j}),u(\alpha )+L_{1}(u(\alpha
),u(\alpha +\varepsilon _{j}))=0,
\end{equation*}%
and in the case $\lambda >0$ to the same set with $v=\delta _{t}u\equiv 0$.
For the moment, we consider the case $N=1$, and assume (L4) is somewhat
stronger, i.e. there is $\delta >0$ so that $L_{12}\geq \delta .$ By the
well-known correspondence between the equilibria of generalized 1d FK\
models and orbits of area-preserving, positive twist maps, there exists a
map on the annulus $f:\mathbb{S}^{1}\times \mathbb{R}$ defined with $%
(u,p)\mapsto (u^{\ast },p^{\ast })$, $p^{\ast }=L_{2}(u,u^{\ast }),$ $%
p=-L_{1}(u,u^{\ast })$ (see e.g. \cite{Katok95}, Section 9.3.b).

We will be interested in functions $L$ which generate twist maps with a
phase portrait as the classical 1d pendulum (see Example \ref{e:twistmap}
below), namely the cases where stable and unstable manifolds of the
equilibria $(0,0),(1,0)$ of the twist map consist of their heteroclinic
connections.

\begin{example}
\label{e:twistmap}We choose a smooth potential $V:\mathbb{R}\rightarrow 
\mathbb{R}$, periodic with period $1$, symmetric ($V(x)=V(-x)$), with unique
maximum per period at $x=0$; for example $V(x)=\sin (2\pi x)$. Then we
consider the continuous flow $\varphi $ of $(u,\delta _{t}u)$ of the second
order equation 
\begin{equation*}
\partial _{tt}u(t)=V^{\prime }(u(t))
\end{equation*}%
on the annulus $\mathbb{S}^{1}\times \mathbb{R}$, and let $f_{V}:\mathbb{S}%
^{1}\times \mathbb{R\rightarrow S}^{1}\times \mathbb{R}$ be its time-one
map. As $f_{V}$ is an area-preserving twist map, one can define its
generating function $L_{V}$ which satisfies (L1)-(L6) (see also \cite%
{Katok95}, Section 9.3.b).
\end{example}

This motivates us to define the set of stable or asymptotically stable
equilibria $\mathcal{S}\subset \mathcal{E\subset }X_{\lambda }$ containing $%
w_{0},w_{1}$, and all $v\in X_{\lambda }\,$for which there exists a vector $%
\varepsilon \in \{\pm \varepsilon _{1},...,\pm \varepsilon _{N}\}$ so that 
\begin{equation}
\lim_{n\rightarrow \infty }T^{n\varepsilon }v\in \{w_{0},w_{1}\}
\label{r:asymptotic}
\end{equation}%
(in the product topology). In the Example \ref{e:twistmap}, the set $%
\mathcal{S}$ is connected, so, as we will see, a solution of (\ref%
{r:overdamped}) can "slide" between the equilibria $w_{0}$ and $w_{1}$.

\subsection{Translationally invariant ergodic measures.}

Rather than discussing asymptotic behavior for a particular initial
condition in $X_{\lambda }$, we will prove existence of coarsening dynamics
for almost every initial condition with respect to a particular Borel
probability measure on $X_{\lambda }$. We now specify properties of such
measures and construct some examples. We will always consider Borel
probability measures on $X_{\lambda }$ with respect to the induced product
topology on $X_{\lambda }$.

The group $\mathbb{Z}^{N}$ naturally acts on $X_{\lambda }$ as the group of
translations $T^{\beta }$ for $\beta \in \mathbb{Z}^{N}$:%
\begin{eqnarray*}
\text{For }\lambda &=&0\text{, }(T^{\beta }u)(\alpha )=u(\alpha +\beta ), \\
\text{For }\lambda &>&0\text{, }(T^{\beta }(u,v))(\alpha )=(u,v)(\alpha
+\beta ).
\end{eqnarray*}

We say that a Borel probability measure $\mu $ on $X_{\lambda }$ is \textit{%
translationally invariant}, if for each $\beta \in \mathbb{Z}^{N}\,$and each
measurable $A\subset X_{\lambda }$, $\mu (T^{\beta }(A))=\mu (A)$ (clearly
it is sufficient to show this only for generators $T^{\varepsilon
_{1}},...,T^{\varepsilon _{n}})$. A set $A\subset X_{\lambda }$ is
translationally invariant, if for each $\beta \in \mathbb{Z}^{N}$, $T^{\beta
}(A)=A$. A translationally invariant Borel probability measure $\mu $ is 
\textit{ergodic}, if all measurable translationally invariant sets have
measure $0$ or $1$.

We can also naturally define the \textit{reflection} operator $R$ on $%
X_{\lambda }$, which "swaps" the stable equilibria $0$ and $1$:%
\begin{eqnarray*}
\text{For }\lambda &=&0,\qquad (Ru)(\alpha )=1-u(\alpha ), \\
\text{For }\lambda &>&0,\qquad (R(u,v))(\alpha )=(1-u(\alpha ),-v(\alpha ))%
\text{.}
\end{eqnarray*}%
Note that (L1) and (L5)\ imply that the equation (\ref{r:overdamped}) is $R$%
-invariant. The first two required properties of Borel probability measures $%
\mu $ on $X_{\lambda }$ are the following:

\begin{description}
\item[(M1)] $\mu $ is translationally invariant and ergodic,

\item[(M2)] $\mu $ is $R$-invariant.
\end{description}

There are many Borel probability measures on $X_{\lambda }$ satisfying
(M1),(M2);\ we construct only one, canonical example $\mu _{\lambda }$, $%
\lambda \geq 0$.

Let $\lambda =0$. Let $\nu $ be the Bernoulli measure on $\{0,1\}$, $\nu
(\{0\})=\nu (\{1\})=1/2,$ and let $\mu _{0}=\nu ^{\mathbb{Z}^{N}}$ (defined
in the standard way as a product of countably many probability measures).
This can be understood as randomly and independently putting each lattice
point in one of the minima $0,1$ with the same probability $1/2$. If $%
\lambda >0$, we do the same, and set the initial velocity to be almost
surely $0$. More precisely, $\mu _{\lambda }=\mu _{0}\times \delta _{\{0\}}$%
, where $\delta _{\{0\}}$ is the atomic measure such that $\delta
_{\{0\}}(\{u_{0}\})=1$, $u_{0}\equiv 0$.

Now $\mu _{\lambda }$ are by definition translationally invariant and $R$%
-invariant. It is easy to show that $\mu _{\lambda }$ is ergodic (e.g. by
applying \cite{Bekka00}, Theorem 1.3).

\subsection{Existence of coarsening.}

In addition to (M1), (M2), we also assume the following

\begin{description}
\item[(M3)] For $\mu $-a.e. $u\in X_{\lambda }$, $\omega (u)\cap (\mathcal{E}%
-\mathcal{S})=\emptyset .$
\end{description}

Here $\omega (u)$ is the $\omega $-limit set with respect to dynamics (\ref%
{r:overdamped}). In dimension $N=1$, this means that $L$ generates a twist
map with a phase portrait as in Example \ref{e:twistmap}, and that $\mu $%
-a.e. $u$ does not lie on a stable manifold (with respect to the dynamics of
(\ref{r:overdamped})) of some of spatially periodic or quasiperiodic
equilibria different from $w_{0},w_{1}$. As discussed also in \cite%
{Eckmann98} in the PDE case, this is a reasonable assumption but a
technically difficult claim. In \cite{Slijepcevic13} we will show the
following:

\begin{claim}
The measures $\mu _{\lambda }\,$constructed above satisfy (M3) in dimensions 
$N=1,2$ for the family of functions $L_{V}$ constructed in Example \ref%
{e:twistmap}.
\end{claim}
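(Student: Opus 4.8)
The plan is to convert the statement into a zero--one dichotomy using the symmetry hypotheses on $\mu_\lambda$, and then to kill the bad alternative. Set $A=\{u\in X_\lambda:\omega(u)\cap(\mathcal E-\mathcal S)\neq\emptyset\}$. Since (\ref{r:overdamped}) is translation-equivariant one has $\varphi(T^\beta u,t)=T^\beta\varphi(u,t)$, hence $\omega(T^\beta u)=T^\beta\omega(u)$; moreover both $\mathcal E$ and $\mathcal S$ are translation invariant (if $T^{n\varepsilon}v\to w_0$ or $w_1$ then the same limit holds for every translate, because $w_0,w_1$ are fixed by all $T^\beta$). Therefore $T^\beta A=A$ for every $\beta\in\mathbb Z^N$. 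Granting the (routine but nontrivial) measurability of $A$ — $\mathcal E$ is closed and $\mathcal S$ is Borel, being a countable union over $\varepsilon\in\{\pm\varepsilon_1,\dots,\pm\varepsilon_N\}$ of limit conditions, and $u\mapsto\omega(u)$ is a measurable set-valued map — the ergodicity in (M1) gives $\mu_\lambda(A)\in\{0,1\}$. The entire problem thus reduces to showing $\mu_\lambda(A)<1$, i.e. to producing a single set of \emph{positive} $\mu_\lambda$-measure of initial data whose $\omega$-limit avoids $\mathcal E-\mathcal S$; ergodicity then automatically upgrades this to full measure.

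Next I would describe $\mathcal E-\mathcal S$ and record its temporal instability. For $N=1$ the equilibria of (\ref{r:overdamped}) are in bijection with orbits of the positive twist map $f_V$ of Example \ref{e:twistmap}, and $\mathcal S$ corresponds precisely to the orbits spatially asymptotic to the saddle fixed points $(0,0),(1,0)$ — these fixed points themselves together with their heteroclinic (kink) connections. Consequently $\mathcal E-\mathcal S$ consists of the spatially recurrent, non-constant orbits: librational periodic orbits, rotational invariant circles, and Aubry--Mather sets of irrational rotation number. Each such configuration fails to minimise the formal energy $E$ and is a strict saddle of it, hence is \emph{temporally} unstable for (\ref{r:overdamped}), possessing at least one genuinely unstable eigendirection (for $\lambda>0$ in the second-order sense). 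For $N=2$ no twist dictionary exists, but the only input I need is the same qualitative fact: every element of $\mathcal E-\mathcal S$ is a nontrivial spatially recurrent equilibrium and is linearly unstable.

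To exhibit the positive-measure good set I would combine two ingredients. First, an instability/non-trapping step: the local stable manifold $W^s_{\mathrm{loc}}(e)$ of a hyperbolic $e\in\mathcal E-\mathcal S$ has codimension at least one along its unstable eigendirection, so $e\in\omega(u)$ forces the orbit to approach $e$ essentially tangent to $W^s_{\mathrm{loc}}(e)$, placing $u$ in $\bigcup_{n\in\mathbb N}\varphi(\cdot,-n)^{-1}(W^s_{\mathrm{loc}}(e))$; since $\mu_\lambda$ is a nondegenerate Bernoulli product it charges no such codimension-one ``matching'' condition. Second, a coarsening/comparison step using the order-preserving structure (L4) together with the $R$-symmetry (M2): starting from data squeezed between spatially expanding sub- and supersolutions valued near $0$ and near $1$, monotonicity traps the local profile between translates converging to $w_0$ and $w_1$, so its $\omega$-limit lies in $\mathcal S$. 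The dissipation bound $\frac1T D(R,T)\to0$ of Corollary \ref{t:dissipation} feeds in here, forcing the orbit to spend almost all time near $\mathcal E$ so that the only way to remain outside $\mathcal S$ is to accumulate on a saddle — which the instability step excludes on a positive-measure set.

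The main obstacle is precisely this positive-measure coarsening (equivalently, the null-set estimate for the stable sets of saddles), for three intertwined reasons. First, $\mu_\lambda$ is \emph{not} invariant under the semiflow, so the ``codimension-one sets are null'' heuristic must be replaced by an honest infinite-dimensional absolute-continuity statement verifying that the nonlinear time-$n$ maps carry $\mu_\lambda$-null sets to $\mu_\lambda$-null sets. Second, in $N=1$ the family $\mathcal E-\mathcal S$ is uncountable — the rotational and Aubry--Mather equilibria form continua parametrised by rotation number — so a per-$e$ estimate does not assemble by countable additivity; I expect to need a uniform spectral-gap (hyperbolicity) bound and a measurable selection that packages the whole continuum of stable sets as one null set, after which ergodicity does the rest. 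Third, in $N=2$ both difficulties are aggravated by the absence of any explicit classification of $\mathcal E-\mathcal S$, so the instability must be extracted quantitatively from (L3)--(L6) and (L6), using the $N=2$ dissipation decay $\tfrac{\log T}{T}D(R,T)=O(\beta)$ of Corollary \ref{t:dissipation} in place of a structural description of the equilibria.
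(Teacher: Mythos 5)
The paper does not prove this Claim at all: it is stated without proof and explicitly deferred to the reference \cite{Slijepcevic13} (``Ergodic LaSalle principle and coarsening dynamics, in preparation''), the text immediately before it calling the assertion ``a reasonable assumption but a technically difficult claim.'' So there is no proof in the paper to compare yours against, and the question is only whether your proposal stands on its own. It does not: it is an outline whose two load-bearing steps are asserted rather than proved, and both are exactly the points the author flags as difficult.

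Concretely, first, your instability step rests on ``each such configuration fails to minimise $E$ and is a strict saddle of it, hence is temporally unstable.'' Failing to be a minimizer does not make a critical point a strict saddle, and for the librational periodic and quasiperiodic equilibria (the actual content of $\mathcal{E}-\mathcal{S}$ here --- note that rotational orbits and Aubry--Mather sets of nonzero rotation number do not even lie in $X_\lambda$, since $0\leq u\leq 1$ forces bounded configurations) linear instability of the full lattice Hessian is precisely the hard spectral fact that needs proving; nothing in (L1)--(L6) hands it to you. Second, the measure-theoretic step is wrong as stated: $\mu_0=\nu^{\mathbb{Z}^N}$ with $\nu$ Bernoulli on $\{0,1\}$ is purely atomic coordinate-wise and supported on the Cantor set $\{0,1\}^{\mathbb{Z}^N}$, so it emphatically does charge ``codimension-one matching conditions'' (the set $\{u:u(0)=0\}$ has measure $1/2$). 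The heuristic that stable sets of saddles are null therefore cannot be applied to $\mu_\lambda$ directly; one must push the measure forward by the smoothing flow and prove an infinite-dimensional absolute-continuity statement, and one must then handle an uncountable family of such stable sets with uniform estimates. You candidly list these obstacles in your final paragraph, which is to your credit, but listing them is an admission that the proof is not there. What you have is a correct reduction (translation invariance of $A=\{u:\omega(u)\cap(\mathcal{E}-\mathcal{S})\neq\emptyset\}$ plus ergodicity gives the zero--one dichotomy, so it suffices to rule out $\mu_\lambda(A)=1$) followed by a research program for the remaining, genuinely open, steps --- which is essentially the state in which the paper itself leaves the Claim.
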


Here we bypass this difficulty, and prove the following existence of
coarsening dynamics in the general, abstract case, by assuming (M3):

\begin{theorem}
\label{t:coarsening}Assume $\lambda \geq 0$, $L$ is a $C^{2}$ function
satisfying (L1)-(L6), and $\mu $ is a Borel probability measure on $%
X_{\lambda }$ satisfying (M1)-(M3). Then for $\mu $-a.e. $u\in X_{\lambda }$%
, its $\omega $-limit set with respect to (\ref{r:overdamped}) contains both
stable equilibria $w_{0},w_{1}$.
\end{theorem}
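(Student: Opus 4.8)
The plan is to combine the symmetry and ergodicity of $\mu$ with the structural consequences of Theorem \ref{t:main} and Proposition \ref{p:neighborhood}, transported to an averaged invariant measure on the set of equilibria. Throughout I use that $\varphi$ commutes with every spatial translation $T^{\beta}$ and with the reflection $R$, that $T^{\beta}w_{i}=w_{i}$ for $i=0,1$, and that $Rw_{0}=w_{1}$; I take $N\in\{1,2\}$, as required by Proposition \ref{p:neighborhood}. The first step reduces the theorem to a single positivity claim. Set $A_{i}=\{u\in X_{\lambda}:w_{i}\in\omega(u)\}$ for $i=0,1$; these are Borel. Since $\omega(T^{\beta}u)=T^{\beta}\omega(u)$ and $w_{i}$ is translation-invariant, each $A_{i}$ is translationally invariant, so by (M1) $\mu(A_{i})\in\{0,1\}$. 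Since $\omega(Ru)=R\omega(u)$ and $Rw_{0}=w_{1}$, we have $A_{0}=RA_{1}$, whence $\mu(A_{0})=\mu(A_{1})$ by (M2). Thus it suffices to prove $\mu(A_{0})>0$: then $\mu(A_{0})=\mu(A_{1})=1$, so $\mu(A_{0}\cap A_{1})=1$, which is exactly the assertion.

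To prove $\mu(A_{0})>0$ I build an averaged invariant measure. Fixing a Banach limit in $T$, for each $u$ I define the occupation measure $\eta_{u}$ by $\int g\,d\eta_{u}=\operatorname{LIM}_{T\to\infty}\frac{1}{T}\int_{0}^{T}g(u(t))\,dt$ for $g\in C(X_{\lambda})$, and set $\Theta=\int_{X_{\lambda}}\eta_{u}\,d\mu(u)$. Proposition \ref{p:neighborhood} gives $\eta_{u}(\mathcal{E})=1$ for every $u$, while $\eta_{u}$ is supported on $\omega(u)$; hence the mass of $\eta_{u}$ sits on $\omega(u)\cap\mathcal{E}$, which by (M3) is contained in $\mathcal{S}$ for $\mu$-a.e.\ $u$, so $\eta_{u}(\mathcal{S})=1$ a.e.\ and therefore $\Theta(\mathcal{S})=1$. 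Moreover $\eta_{T^{\beta}u}=T^{\beta}_{*}\eta_{u}$ and $\eta_{Ru}=R_{*}\eta_{u}$, so (M1) and (M2) make $\Theta$ both translationally invariant and $R$-invariant.

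The heart of the argument is that such a $\Theta$ charges only $w_{0}$ and $w_{1}$. The set $\mathcal{S}\setminus\{w_{0},w_{1}\}$ is a finite union, over directions $\varepsilon\in\{\pm\varepsilon_{1},\dots,\pm\varepsilon_{N}\}$ and endpoints $w_{i}$, of the heteroclinic sets $K_{\varepsilon}^{i}=\{v\neq w_{i}:\lim_{n\to\infty}T^{n\varepsilon}v=w_{i}\}$. Since $\Theta$ is $T^{\varepsilon}$-invariant, Poincar\'e recurrence shows $\Theta$-a.e.\ point is recurrent under $T^{\varepsilon}$; but every $v\in K_{\varepsilon}^{i}$ satisfies $T^{n\varepsilon}v\to w_{i}\neq v$ and so is \emph{not} $T^{\varepsilon}$-recurrent, whence $\Theta(K_{\varepsilon}^{i})=0$. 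Summing over the finitely many $(\varepsilon,i)$ gives $\Theta(\mathcal{S}\setminus\{w_{0},w_{1}\})=0$, and together with $\Theta(\mathcal{S})=1$ this yields $\Theta(\{w_{0}\})+\Theta(\{w_{1}\})=1$; the $R$-invariance then forces $\Theta(\{w_{0}\})=\Theta(\{w_{1}\})=\tfrac12$. Finally $\tfrac12=\Theta(\{w_{0}\})=\int\eta_{u}(\{w_{0}\})\,d\mu(u)$ forces $\eta_{u}(\{w_{0}\})>0$, hence $w_{0}\in\operatorname{supp}\eta_{u}\subseteq\omega(u)$, on a set of positive $\mu$-measure; that is, $\mu(A_{0})>0$, as required.

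The main obstacle is the structural identification of $\Theta$: establishing that a translation-invariant measure living on $\mathcal{S}$ cannot charge the kinks, for which the Poincar\'e-recurrence dichotomy between escaping and recurrent configurations is the decisive input, and correctly transferring the \emph{pointwise} hypothesis (M3) to the averaged object $\Theta$ (this is why the occupation-measure representation $\Theta=\int\eta_{u}\,d\mu$ seems essential, rather than a bare weak-$*$ limit of $\frac{1}{T}\int_{0}^{T}(\varphi_{t})_{*}\mu\,dt$, to which (M3) does not obviously apply). The measurability of $u\mapsto\eta_{u}$ and the interchange of $\operatorname{LIM}$ with $\int d\mu$ are routine but must be secured through the Banach-limit device fixed above.
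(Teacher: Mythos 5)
Your argument is correct in substance but follows a genuinely different route from the paper's. The paper defines $A_{0}$ as the set of $u$ whose $\omega$-limit set contains either $w_{0}$ \emph{or} some equilibrium asymptotic to $w_{0}$ in the sense of (\ref{r:asymptotic}) (and $A_{1}$ analogously); it obtains $\mu(A_{0}\cup A_{1})=1$ directly from Proposition \ref{p:contains} combined with (M3), runs the same reflection-plus-ergodicity step that you use to get $\mu(A_{0})=\mu(A_{1})=1$, and then must upgrade ``$\omega(u)$ meets $\mathcal{S}$'' to ``$w_{0}\in\omega(u)$,'' which it dispatches in one sentence via translational invariance, (\ref{r:asymptotic}) and closedness of $\omega$-limit sets. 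You instead work with the honest sets $\{u:w_{i}\in\omega(u)\}$ from the start, reduce to showing $\mu(A_{0})>0$, and replace the paper's final upgrade by the averaged occupation measure $\Theta$ together with the Poincar\'e-recurrence observation that a translation-invariant probability measure carried by $\mathcal{S}$ cannot charge the heteroclinic part $\mathcal{S}\setminus\{w_{0},w_{1}\}$, since those configurations are not $T^{\varepsilon}$-recurrent. That dichotomy is an attractive and transparent substitute for the paper's rather terse last step; the price is the extra measure-theoretic scaffolding, and the input from Sections 6--7 is the same in both proofs (you use Proposition \ref{p:neighborhood} directly where the paper uses its corollary, Proposition \ref{p:contains}).

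The one step you should not label routine is the measurability of $u\mapsto\eta_{u}$. A Banach limit is a choice-dependent Hahn--Banach extension, and the pointwise Banach limit of a bounded family of measurable (even continuous) functions of $u$ need not be measurable; consequently $\Theta=\int\eta_{u}\,d\mu$, and in particular the identity $\Theta(\{w_{0}\})=\int\eta_{u}(\{w_{0}\})\,d\mu$ for a Borel (non-open, non-closed) set, is not automatically licensed. You rightly note that a bare weak-$*$ limit of $\frac{1}{T}\int_{0}^{T}(\varphi_{t})_{*}\mu\,dt$ does not see the pointwise hypothesis (M3), so the difficulty cannot simply be pushed onto that construction either. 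The gap is repairable --- for instance by running the recurrence argument on closed neighbourhoods of $w_{0},w_{1}$ and on the upper densities $\limsup_{T\to\infty}\frac{1}{T}\int_{0}^{T}1_{\mathcal{V}}(u(t))\,dt$, which are genuinely measurable in $u$, or by a measurable selection from the set of weak-$*$ limit points of the time averages (in which case the translation equivariance used for the invariance of $\Theta$ must be recovered separately) --- but it requires an actual argument rather than the Banach-limit device as stated.
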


\begin{proof}
Let $A_{0}$ be the set of all $u\in X_{\lambda }\,\ $so that $\omega (u)$
contains either $w_{0}$ or an equilibrium $v$ asymptotic to $w_{0}$ in the
sense of (\ref{r:asymptotic}), and let $A_{1}\subset X_{\lambda }$ be an
analogous set with respect to $w_{1}$. By Proposition \ref{l:boundedEDS},
the dynamics of (\ref{r:overdamped}) generates a bounded Lattice EDS\
semiflow on a compact set $X_{\lambda }$, thus Proposition \ref{l:unstable}
applies. So for each $u\in X_{\lambda }$, its $\omega $-limit set contains
an equilibrium. Now because of (M3),%
\begin{equation*}
\mu (A_{0}\cup A_{1})=1\text{.}
\end{equation*}

However $R$-invariance of (\ref{r:overdamped}) and (M2) easily imply that $%
\mu (A_{0})=\mu (A_{1})$. As $A_{0},A_{1}$ are translationally invariant, by
ergodicity, $\mu (A_{0}),\mu (A_{1})\in \{0,1\}.$ We conclude that $\mu
(A_{0})=\mu (A_{1})=1$.$\ \,$Finally, by the translational invariance of $%
A_{0},A_{1}$, relation (\ref{r:asymptotic}), and closedness of $\omega $%
-limit sets, we deduce that each $u\in A_{0}\,$must contain $w_{0}$ in its $%
\omega $-limit set, respectively each $u\in A_{1}$ must contain $w_{1}$ in
its $\omega $-limit set. Now $A_{0}\cap A_{1}$ is the required subset of $%
X_{\lambda }$ of full measure.
\end{proof}

The interpretation of Theorem \ref{t:coarsening} is that for $\mu $-a.e.
initial condition, the size of "droplets" of lattice points in the same
equilibrium $0$ or $1$ grows with time, but a single lattice point changes
its position infinitely many times. By Corollary \ref{t:dissipation}, the
average time a lattice point spends "moving" between $w_{0}$ and $w_{1}$ is $%
0$. One can also use the bounds on relaxation times in Section \ref%
{s:relaxationtimes} to calculate upper bounds on the maximal "lifetime" of
droplets of certain size.

\subsection{Non-equilibrium point in a $\protect\omega $-limit set}

As was mentioned in the introduction, the dynamics of Lattice EDS\
topologically differs from gradient systems, as we can construct initial
conditions $u$ so that $\omega (u)$ contains a non-equilibrium point. This
is possible even for bounded Lattice EDS on compact sets in dimension $N=1$.
We consider the equation (\ref{r:overdamped}), let $\lambda =0$, dimension $%
N=1$, and assume $L$ is of the type $L_{V}$ constructed in Example \ref%
{e:twistmap}, so $L_{V}$ satisfies (L1)-(L6). As discussed in more detail in 
\cite{Gallay:12}, Example 5.7, in the continuous-space case, we consider an
initial condition%
\begin{equation*}
u(\alpha )=\left\{ 
\begin{array}{ll}
0 & b_{2n}\leq |\alpha |<b_{2n+1} \\ 
1 & b_{2n+1}\leq |\alpha |<b_{2n+2,}%
\end{array}%
\right.
\end{equation*}%
where $0=b_{0}<b_{1}<b_{2}<...$ is an increasing sequence of integers,
satisfying also $b_{n+1}\gg b_{n}$. As explained in \cite{Gallay:12}, the
annihilation of kinks in the coarsening sequence always happens at the
origin $\alpha =0$, so for a sequence $b_{n}$ increasing rapidly enough, $%
\omega $-limit set of $u$ in $X_{0}$ contains both $w_{0},w_{1}$ but also
non-equilibrium, heteroclinic orbits of (\ref{r:overdamped}) connecting
them. A detailed discussion and a proof will appear in \cite{Slijepcevic13}.

\section{Appendix:\ Proofs of Lemmas \protect\ref{l:ricatti1}, \protect\ref%
{l:ricatti2}}

Here we study the recurrent sequence%
\begin{equation}
G_{r+1}=G_{r}-\lambda r^{N-1}+\varepsilon \frac{G_{r}^{2}}{r^{N-1}},
\label{r:sequence}
\end{equation}%
such that $G_{r}/r^{N-1}\,\ $is bounded. We substitute $g_{r}=G_{r}/r^{N-1}$%
, and can study it as orbits of a discrete dynamical system $G(r,g)=(r^{\ast
},g^{\ast })$,%
\begin{eqnarray}
r^{\ast } &=&r+1,  \label{r:orbitr} \\
g^{\ast } &=&(g-\lambda +\varepsilon g^{2})\left( \frac{r}{r+1}\right)
^{N-1},  \label{r:orbitg}
\end{eqnarray}%
where $\lambda ,\varepsilon >0$ are parameters. $G$ is defined on $%
[1,+\infty )\times \mathbb{R\rightarrow }[1,+\infty )\times \mathbb{R}$. We
want to give sufficient and sharp conditions on $(r,g)$ so that the second
variable is bounded along the orbit of $G$. We first note that for $r=\infty 
$, $G$ has two fixed points $\pm \sqrt{\lambda /\varepsilon }$. We will see
that the fixed point $+\sqrt{\lambda /\varepsilon }$ is asymptotically a
saddle point, and that the necessary and sufficient condition for $g\geq 0$
to be bounded\ along the orbit of $G$ is that $(r,g)\,$is below the stable
manifold of $+\sqrt{\lambda /\varepsilon }$. More precisely:

\begin{lemma}
\label{l:unstable}There exist a continuos curve $g_{s}:[1,+\infty )\times 
\mathbb{R\cup \{+\infty \}}$ such that:

\begin{description}
\item[(i)] If $g=g_{s}(r)$, then $\lim_{n\rightarrow \infty
}G^{n}(r,g)=(+\infty ,\sqrt{\lambda /\varepsilon })$.

\item[(ii)] If $0\leq g<g_{s}(r)$, then $\lim_{n\rightarrow \infty
}G^{n}(r,g)=(+\infty ,-\sqrt{\lambda /\varepsilon })$.

\item[(iii)] If $g>g_{s}(r)$, then $\lim_{n\rightarrow \infty
}G^{n}(r,g)=(+\infty ,+\infty )$.

\item[(iv)] The curve $g_{s}(r)$ is non-increasing. If $N\geq 2$, $g_{s}(r)$
is strictly decreasing.
\end{description}
\end{lemma}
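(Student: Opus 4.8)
The plan is to analyze the one-dimensional map $G$ defined in (\ref{r:orbitr})–(\ref{r:orbitg}) and to construct the separating curve $g_s$ by exploiting the saddle structure at the fixed point $+\sqrt{\lambda/\varepsilon}$ in the limit $r\to\infty$. The essential observation is that for fixed $r$, the map $g\mapsto g^{\ast}$ is strictly increasing and convex in $g$ (since the quadratic $g-\lambda+\varepsilon g^2$ has positive leading coefficient and the factor $(r/(r+1))^{N-1}$ is a positive constant in $g$). This monotonicity in the second coordinate is the structural backbone: it means that for any fixed starting radius $r_0\geq 1$, the set of initial values $g$ whose orbit stays bounded is a (possibly empty or half-infinite) interval of the form $[{-}\infty, g_s(r_0)]$ or $[{-}\infty, g_s(r_0))$, because if two initial conditions $g<g'$ are ordered, their images remain ordered for all iterates.

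First I would fix $r_0$ and define $g_s(r_0)$ as the supremum of all $g\geq 0$ for which the forward orbit of $(r_0,g)$ remains bounded in its second coordinate. Monotonicity guarantees this threshold is well-defined and that orbits starting strictly below it stay bounded while orbits starting strictly above escape. The next task is to pin down the limiting behavior. For $g$ below threshold, I expect the orbit to be attracted toward the lower fixed point; heuristically, near $+\sqrt{\lambda/\varepsilon}$ the linearization of the quadratic part has derivative $1+2\varepsilon\sqrt{\lambda/\varepsilon}>1$ (repelling) while near $-\sqrt{\lambda/\varepsilon}$ it is $1-2\sqrt{\lambda\varepsilon}<1$ (attracting), confirming the saddle/attractor dichotomy in the $r=\infty$ limit. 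To make this rigorous for the genuinely nonautonomous map (the factor $(r/(r+1))^{N-1}$ drifts as $r$ grows), I would bound the orbit inside a trapping interval around $-\sqrt{\lambda/\varepsilon}$ and use that the drift factor tends to $1$, so the asymptotic fixed points are exactly $\pm\sqrt{\lambda/\varepsilon}$; this yields (i), (ii), (iii).

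For the continuity and monotonicity of $g_s$ in part (iv), I would argue as follows. Continuity of $g_s$ follows from continuous dependence of the map $G$ on its arguments together with the strict monotonicity, via a standard squeezing argument on the threshold. For monotonicity of $g_s(r)$ in $r$, the key is to compare the preimage structure: a point on the threshold at radius $r+1$ must pull back under $G$ to the threshold at radius $r$, and I would show that the extra contracting factor $(r/(r+1))^{N-1}\leq 1$ forces $g_s(r)\geq g_s(r+1)$. When $N\geq 2$ this factor is strictly less than $1$, and I would leverage that strict contraction to upgrade non-increasing to strictly decreasing, checking that the threshold value is strictly positive so the strict inequality genuinely propagates.

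\emph{The hard part} will be establishing the existence and the precise limiting behavior of $g_s$ for the nonautonomous system: because the map depends on $r$ through the slowly varying factor $(r/(r+1))^{N-1}$, the fixed points $\pm\sqrt{\lambda/\varepsilon}$ are only asymptotic, so I cannot simply invoke a stable-manifold theorem for a hyperbolic fixed point. The delicate step is to build explicit trapping regions (sub- and super-solution barriers) that funnel below-threshold orbits to $-\sqrt{\lambda/\varepsilon}$ and above-threshold orbits to $+\infty$, uniformly enough that the threshold $g_s(r)$ lands exactly on the asymptotic stable manifold with limit $+\sqrt{\lambda/\varepsilon}$. Managing the case $N=1$ separately (where the drift factor is identically $1$ and $g_s$ is merely non-increasing rather than strictly decreasing) will require a slightly different comparison, since then the map is genuinely autonomous and the threshold can be a constant.
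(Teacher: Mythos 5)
Your route is genuinely different from the paper's. The paper compactifies the radius variable via $s=1-1/r$, so that $r=+\infty$ becomes a genuine fixed line of a $C^{1}$ map $H$ on $[0,1]\times \mathbb{R}$; the points $(1,\pm \sqrt{\lambda /\varepsilon })$ are then honest fixed points, the lower one a sink and the upper one with eigenvalues $1$ and $1+2\sqrt{\lambda \varepsilon }$, and the curve $g_{s}$ is obtained as the global extension (by taking preimages) of the \emph{center} manifold of the upper fixed point, followed by a decomposition of the phase space into invariant regions. This compactification is exactly the device that dissolves what you correctly identify as the hard part --- the non-autonomous drift of the factor $(r/(r+1))^{N-1}$ --- without any hand-built sub/super-solution barriers. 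Your order-theoretic threshold construction is more elementary and could in principle be pushed through, but it buys back that difficulty in exchange.

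Two concrete weak points in your plan. First, the set of $g$ with bounded forward orbit is \emph{not} a half-line of the form $[-\infty ,g_{s}(r)]$: for $g$ sufficiently negative the quadratic sends the orbit to $+\infty $ as well, which is why the paper introduces a second curve $g_{s}^{-}<0$ (the other preimage of the separatrix under the quadratic) and shows that the correct forward-invariant region is the strip between $g_{s}^{-}$ and $g_{s}$. Relatedly, your comparison argument (``ordered initial data stay ordered'') is valid only on the increasing branch $g\geq -1/(2\varepsilon )$ of the fiber map, and orbits started in $[0,g_{s}(r))$ do become negative after one step (e.g. $g=0\mapsto -\lambda (r/(r+1))^{N-1}$), so you must verify they never leave that branch, or work with the strip as the paper does. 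Second, your argument for $g_{s}(r+1)\leq g_{s}(r)$ --- that the contracting factor $(r/(r+1))^{N-1}\leq 1$ ``forces'' it --- does not close as stated; what is needed, and what the paper actually proves, is that a positive orbit which ever increases must increase at every subsequent step and hence diverge, so the bounded positive separatrix orbit is necessarily non-increasing, strictly so when $N\geq 2$. With those two repairs your plan is viable, but the compactification plus center manifold argument is the cleaner path.
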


Note that we did not exclude the possibility that the unstable manifold $%
g_{s}$ diverges for some $r_{0}\geq 1$ (and is then formally $=+\infty $ on $%
[1,r_{0}]$. We will see later that this is not possible.

\begin{proof}
In order to apply standard results on invariant manifolds of fixed points,
we need to choose the right substitution and compactify (\ref{r:orbitr}), (%
\ref{r:orbitg}) at $r=+\infty $. We set the bijective map $[0,1]\rightarrow
\lbrack 1,+\infty ]$, $s\mapsto r$ with%
\begin{equation*}
r=\frac{1}{1-s}.
\end{equation*}

If $i(s,g)=(r,g)$, $i^{-1}(r,g)=(s,g)$, we now analyze the map $%
H:[0,1]\times \mathbb{R\rightarrow }[0,1]\times \mathbb{R}$ such that $%
G=i^{-1}\circ H\circ i$. Then it is easy to check that the map $H$ is given
with $(s,g)\longmapsto (s^{\ast },g^{\ast })$,%
\begin{eqnarray*}
s^{\ast } &=&\frac{1}{2-s}, \\
g^{\ast } &=&(g-\lambda +\varepsilon g^{2})\left( \frac{1}{2-s}\right)
^{N-1}.
\end{eqnarray*}

Clearly $H$ is $C^{1}$ on $[0,1]\times \mathbb{R}$ and that it can be
extended to a $C^{1}$ map on a neighbourhood of $[0,1]\times \mathbb{R}$. As 
$s^{\ast }>s$ on $[0,1)$, the map $H$ has two fixed points $(s,g)=(1,\pm 
\sqrt{\lambda /\varepsilon })$. We calculate the differentials at fixed
points, and get%
\begin{eqnarray*}
DH(1,\sqrt{\lambda /\varepsilon }) &=&\left( 
\begin{array}{cc}
1 & 0 \\ 
(N-1)\sqrt{\lambda /\varepsilon } & 1+2\sqrt{\lambda \varepsilon }%
\end{array}%
\right) , \\
DH(1,-\sqrt{\lambda /\varepsilon }) &=&\left( 
\begin{array}{cc}
1 & 0 \\ 
-(N-1)\sqrt{\lambda /\varepsilon } & 1-2\sqrt{\lambda \varepsilon }%
\end{array}%
\right) .
\end{eqnarray*}

Clearly the fixed point $(1,-\sqrt{\lambda /\varepsilon })$ is a sink:\ the $%
s=1$ direction is attracting, as the direction $s=1$ corresponds to the
eigenvalue $(0,1)$ and the eigenvector $1-2\sqrt{\lambda \varepsilon }$ and
is thus attracting, and the other direction is attracting as $s\mapsto
s^{\ast }$ is increasing. We consider now the fixed point $(1,\sqrt{\lambda
/\varepsilon })$ with eigenvalues $1$, $1+2\sqrt{\lambda \varepsilon }$. We
apply the central manifold theorem, and construct an unstable and a central
manifold. Clearly the unstable manifold is $s=1$. Assume a local central
manifold is given by $(s,g_{s}(s))$, $g_{s}:[1-\delta ,1]\rightarrow \mathbb{%
R}$ (because of the expression for $s\mapsto s^{\ast }$, we can without loss
of generality assume that the central manifold is locally parametrized by $s$%
). One can check that the central manifold is smooth (as $H$ is smooth in
the neighborhood of $(1,\sqrt{\lambda /\varepsilon })$), locally unique, and
that it can be extended to a global\ smooth central manifold $(s,g_{s}(s))$, 
$g_{s}:[s_{0},1]\rightarrow \mathbb{R}$, $0\leq s_{0}<1$ by extending it
with values of $H^{-1}\,$for which $g>0$. Also if $s_{0}>0$, then $%
\lim_{s\rightarrow s_{0}^{+}}g_{s}(s)=+\infty $. Furthermore, as $s\mapsto
s^{\ast }$ is strictly increasing, for each $(s,g_{s}(s))$, $%
\lim_{n\rightarrow \infty }H^{n}(s,g_{s}(s))=(1,\sqrt{\lambda /\varepsilon }%
) $, so the central manifold is asymptotically stable.

We now show that $g_{s}(s)$ is decreasing. From the direction of the central
eigenvector at the fixed point $(1,\sqrt{\lambda /\varepsilon })$, we deduce
that the function $g_{s}(s)$ is eventually decreasing (strictly decreasing
if $N\geq 2$). However, one can easily check that if for any $(s_{1},g_{1})$%
, $(s_{2},g_{2})=H(s_{1},g_{1})$, $(s_{3},g_{3})=H(s_{2},g_{2})$, $g_{2}\geq
g_{1}>0$ implies $g_{3}\geq g_{2}$.(and $g_{3}>g_{2}$ if $s<1$ and $N\geq 2$%
). From this we see that $g_{s}(s)$ is decreasing, and in the case $N\geq 2$
strictly decreasing.

Consider now the negative image of the unstable manifold, i.e. the maximal
set $(s,g_{s}^{-}(s))\,$satisfying $H(s,g_{s}^{-}(s))=(s^{\ast },g_{s}(s))$
and $g_{s}^{-}(s)<0$. Then we can divide the state space $[0,1]\times 
\mathbb{R}$ of the function $H$ into five sets:\ the sets of $(s,g)$ such
that $A=\{g>g_{s}(s)\}$, $W_{s}=\{g=g_{s}(s)\}$, $B=%
\{g_{s}(s)>g>g_{s}^{-}(s)\}$, $W_{s}^{-}=\{g=g_{s}^{-}(s)\}$ and $%
C=\{g_{s}^{-}(s)>g\}$. One can now summarize the discussion above, and show
the following: $H(A)\subset A$, $H(W_{s})\subset W_{s}$, $H(B)\subset
B,\,H(W_{s}^{-})\subset W_{s},$ $H(C)\subset A$, and $(1,\sqrt{\lambda
/\varepsilon })\in W_{s}$, $(1,-\sqrt{\lambda /\varepsilon })\in B$. As the
dynamics of $H$ asymptotically coincides with the dynamics along the
invariant set $s=1$, we get the following:%
\begin{eqnarray*}
(s,g) &\in &A\cup C\Rightarrow \lim_{n\rightarrow \infty
}H^{n}(s,g)=(1,+\infty ), \\
(s,g) &\in &W_{s}\cup W_{s}^{-}\Rightarrow \lim_{n\rightarrow \infty
}H^{n}(s,g)=(1,\sqrt{\lambda /\varepsilon }), \\
(s,g) &\in &B\Rightarrow \lim_{n\rightarrow \infty }H^{n}(s,g)=(1,-\sqrt{%
\lambda /\varepsilon }).
\end{eqnarray*}

By substituting back $r$ instead of $s$ and writing $g_{s}(r)=g_{s}(s(r))$,
these conclusions imply all claims of the Lemma.
\end{proof}

We now prove Lemma \ref{l:ricatti1}. We now see that it is necessary and
sufficient to give upper bounds on $G_{r}$ on the stable manifold, that
means of the form $G_{r}=g_{s}(r)r^{N-1}$. The reason is that by Lemma \ref%
{l:unstable}, if $G_{r}>g_{s}(r)r^{N-1}$, $\lim_{r\rightarrow \infty
}G_{r}/r^{N-1}=+\infty $. Also if $0<G_{r}\leq g_{s}(r)r^{N-1}$, $%
\lim_{r\rightarrow \infty }G_{r}/r^{N-1}=\pm \sqrt{\lambda /\varepsilon }$.

In the case $N=1$, the stable manifold can be expressed exactly and is $%
g_{s}(r)=\sqrt{\lambda /\varepsilon }$, which implies (\ref{r:bound1}).

We deduce the case $N\geq 2$ from the fact that $g_{s}(r)$ is strictly
decreasing, thus $g_{s}(r+1)<g_{s}(r)$. That means%
\begin{equation*}
g_{s}(r)-\lambda +\varepsilon g_{s}(r)^{2}<g_{s}(r)\left( \frac{r+1}{r}%
\right) ^{N-1}\text{.}
\end{equation*}%
If $C_{r}=\left( \frac{r+1}{r}\right) ^{N-1}-1$, by solving the quadratic
inequality and applying $\sqrt{1+x}<1+\sqrt{x}$ for $x>0$ we get%
\begin{equation}
g_{s}(r)<\frac{C_{r}+C_{r}\sqrt{1+4\varepsilon \lambda /C_{r}^{2}}}{%
2\varepsilon }<\frac{C_{r}}{\varepsilon }+\sqrt{\frac{\lambda }{\varepsilon }%
}\text{.}  \label{r:finalone}
\end{equation}%
However%
\begin{equation}
C_{r}<\frac{(N-1)}{r}\left( 1+\frac{1}{r}\right) ^{N-2}\text{.}
\label{r:finaltwo}
\end{equation}%
Combining (\ref{r:finalone}), (\ref{r:finaltwo}) and $G_{r}\leq
g_{s}(r)r^{N-1}$ we get (\ref{r:boundall}). Also $g_{s}(r)$ is finite for
all $r\in \lbrack 1,+\infty )$. One can also show by constructing
appropriate examples, that (\ref{r:boundall}) is up to a constant the best
upper bound in cases $N\geq 3$.

The case $N=2$ and Lemma \ref{l:ricatti2} is, however, more subtle. Fix $N=2$%
, and assume $G_{r}=g_{s}(r)r$. As $G_{r}\geq r\sqrt{\lambda /\varepsilon }%
>0 $, we divide (\ref{r:sequence}) by $G_{r}^{2}$ and get%
\begin{equation}
\frac{G_{r+1}}{G_{r}^{2}}-\frac{1}{G_{r}}=-\lambda \frac{r}{G_{r}^{2}}+\frac{%
\varepsilon }{r}.  \label{r:inverse}
\end{equation}

Inserting $G_{r}\geq r\sqrt{\lambda /\varepsilon }$ in (\ref{r:sequence}),
we get $G_{r+1}\geq G_{r}$. However, as $g_{s}(r)$ is decreasing, $%
G_{r+1}/G_{r}=g_{s}(r+1)/g_{s}(r)\cdot (1+1/r)\leq (1+1/r)\leq 2$. We now
have%
\begin{equation}
\frac{G_{r+1}}{G_{r}^{2}}-\frac{1}{G_{r}}=\frac{G_{r+1}-G_{r}}{G_{r}^{2}}%
\leq 2\frac{G_{r+1}-G_{r}}{G_{r}G_{r+1}}=2\left( \frac{1}{G_{r}}-\frac{1}{%
G_{r+1}}\right) \text{.}  \label{r:difference}
\end{equation}

We now fix an integer $k\geq 2$, and sum (\ref{r:inverse}) for $s=r,...,kr-1$%
. Applying (\ref{r:difference}) and $G_{r+s}\geq G_{r}$ we have%
\begin{eqnarray*}
\frac{1}{G_{r}} &\geq &\frac{1}{G_{r}}-\frac{1}{G_{kr}}\geq -\frac{\lambda }{%
2G_{r}^{2}}\sum_{s=r}^{kr-1}s+\frac{\varepsilon }{2}\sum_{s=r}^{kr-1}\frac{1%
}{s} \\
&\geq &-\frac{\lambda }{4G_{r}^{2}}k^{2}r^{2}+\frac{\varepsilon }{2}\log k%
\text{.}
\end{eqnarray*}

Thus we have reduced the problem of bounding $1/G_{r}$ from below to solving
a quadratic inequality; and then optimizing it for $k$. In the case%
\begin{equation}
\frac{1}{2}\lambda \varepsilon k^{2}r^{2}\log k\leq 1,  \label{r:condition}
\end{equation}
we get by applying $\sqrt{1+x}>1+x/4$ for $0\leq x\leq 1$: 
\begin{equation}
\frac{1}{G_{r}}\geq \frac{-1+\sqrt{1+\lambda \varepsilon k^{2}r^{2}\log k/2}%
}{\lambda k^{2}r^{2}/2}\geq \frac{1}{4}\varepsilon \log k\text{.}
\label{r:final}
\end{equation}

In the case $r^{2}\lambda \varepsilon \leq 1/2$ we choose a positive integer 
$k$ so that%
\begin{equation*}
\left( \frac{1}{2r^{2}\lambda \varepsilon }\right) ^{1/3}\leq k\leq 2\left( 
\frac{1}{2r^{2}\lambda \varepsilon }\right) ^{1/3}\text{.}
\end{equation*}%
As $k^{2}\log k\leq k^{3}/2$, (\ref{r:condition}) holds. Inserting $\log
k\geq -\log (2\lambda \varepsilon r^{2})/3$ into (\ref{r:final}) completes
the proof of Lemma \ref{l:ricatti2}.

\begin{acknowledgement}
The author wishes to thank Th. Gallay for co-developing the PDE equivalent
of the theory, and C. Baesens for useful discussions. The work has been
partially supported by the grant No 037-0372791-2803 of the Croatian
Ministry of Science.
\end{acknowledgement}

\end{document}